\documentclass[letterpaper,11pt]{article}

% Load necessary packages and definitions.
%%%%%%%%%%%%%%%%%%%%%%%% NIPS package imports. %%%%%%%
\usepackage[utf8]{inputenc} % allow utf-8 input
\usepackage[T1]{fontenc}    % use 8-bit T1 fonts
\usepackage[colorlinks,urlcolor=blue,citecolor=blue,linkcolor=blue]{hyperref}       % hyperlinks
\usepackage{url}            % simple URL typesetting
\usepackage{booktabs}       % professional-quality tables
\usepackage{amsfonts}       % blackboard math symbols
\usepackage{microtype}      % microtypography
\usepackage{nicefrac}
\usepackage{subfig}

%%%%%%%%%%%%%%%%%%%%%%%% Custom imports. %%%%%%%
\usepackage[noend]{algpseudocode}
\usepackage{algorithm}
\usepackage{bm}
\usepackage[colorinlistoftodos]{todonotes}
\usepackage{amsmath}
\usepackage{amsthm}
\usepackage{amssymb}
\usepackage{thmtools}
\usepackage{thm-restate}
\usepackage{paralist}
\usepackage{comment}
\usepackage{cite}
\usepackage{float}
\usepackage{balance}
\usepackage{stmaryrd}

%%%%%%%%%%%%%%%%%%%%%%%% Custom commands. %%%%%%%

\newcommand{\true}{\operatorname{true}}
\newcommand{\false}{\operatorname{false}}
\newcommand{\innp}[1]{\left\langle #1 \right\rangle}

\newcommand{\zeros}{\textbf{0}}
\newcommand{\vx}{\mathbf{x}}
\newcommand{\vd}{\mathbf{d}}

\newcommand{\cx}{\mathcal{X}}
\newcommand{\cc}{\mathcal{C}}
\newcommand{\cs}{\mathcal{S}}

\newcommand{\vxh}{\mathbf{\hat{x}}}

\newcommand{\vy}{\mathbf{y}}
\newcommand{\vz}{\mathbf{z}}
\newcommand{\vv}{\mathbf{v}}
\newcommand{\vw}{\mathbf{w}}

\newcommand{\vb}{\mathbf{b}}
\newcommand{\vc}{\mathbf{c}}

\newcommand{\vu}{\mathbf{u}}
\newcommand{\vs}{\mathbf{s}}

\newcommand{\vlambda}{\bm{\lambda}}

\newcommand{\defeq}{\stackrel{\mathrm{\scriptscriptstyle def}}{=}}

\newcommand{\rr}{\mathbb{R}}
\newcommand{\norm}[1]{\left\| #1 \right\|}

\newcommand{\cf}{\mathcal{F}}
\newcommand{\ct}{\mathcal{T}}
\newcommand{\cb}{\mathcal{B}}

\newcommand*{\vsepfbox}[1]{%
  \begingroup
    \sbox0{\fbox{#1}}%
    \setlength{\fboxrule}{0pt}%
    \mbox{\kern-\fboxsep\fbox{\unhbox0}\kern-\fboxsep}%
  \endgroup
}

%%%%%%%%%%%%%%%% Theorem setup. %%%%%%%%%%%%%%
\theoremstyle{plain} \numberwithin{equation}{section}
\newtheorem{theorem}{Theorem}[section]
\numberwithin{theorem}{section}
\newtheorem{corollary}[theorem]{Corollary}

\newtheorem{lemma}[theorem]{Lemma}
\newtheorem{proposition}[theorem]{Proposition}

\newtheorem{fact}[theorem]{Fact}
\theoremstyle{definition}

\newtheorem{remark}[theorem]{Remark}

%%%%%%%%%%%%%%%%%%%%%%%% Math operators. %%%%%%%

\DeclareMathOperator{\interior}{\mathrm{int}}
\DeclareMathOperator{\relinterior}{\mathrm{rel.int}}
\DeclareMathOperator*{\argmin}{argmin}
\DeclareMathOperator*{\argmax}{argmax}
\DeclareMathOperator{\Aff}{\mathrm{aff}}
% old: \newcommand{\Aff}{\mathrm{aff}}
\DeclareMathOperator{\vertex}{\mathrm{vert}}
% old: \newcommand{\vertex}{\mathrm{vert}}
\DeclareMathOperator{\co}{\mathrm{co}}
%%%%%%%%%%%%%%%%%%%%%%%% Miscelaneous. %%%%%%%
\graphicspath{{imgs/}}
\makeatletter
\def\mathcolor#1#{\@mathcolor{#1}}
\def\@mathcolor#1#2#3{%
  \protect\leavevmode
  \begingroup
    \color#1{#2}#3%
  \endgroup
}
\makeatother

%%%% Sebastian's macros

\newif\ifaistats
\aistatsfalse
\ifaistats
\usepackage{aistats2020}
\else
\usepackage[margin=1in]{geometry}
\usepackage{times}
\date{}
\fi
% If your paper is accepted, change the options for the package
% aistats2020 as follows:
%
% \usepackage[accepted]{aistats2020}
%
% This option will print headings for the title of your paper and
% headings for the authors names, plus a copyright note at the end of
% the first column of the first page.

% If you set papersize explicitly, activate the following three lines:
%\special{papersize = 8.5in, 11in}
%\setlength{\pdfpageheight}{11in}
%\setlength{\pdfpagewidth}{8.5in}

% If you use natbib package, activate the following three lines:
%\usepackage[round]{natbib}
%\renewcommand{\bibname}{References}
%\renewcommand{\bibsection}{\subsubsection*{\bibname}}

% If you use BibTeX in apalike style, activate the following line:
%\bibliographystyle{apalike}

\begin{document}

% If your paper is accepted and the title of your paper is very long,
% the style will print as headings an error message. Use the following
% command to supply a shorter title of your paper so that it can be
% used as headings.
%
%\runningtitle{I use this title instead because the last one was very long}

% If your paper is accepted and the number of authors is large, the
% style will print as headings an error message. Use the following
% command to supply a shorter version of the authors names so that
% they can be used as headings (for example, use only the surnames)
%
%\runningauthor{Surname 1, Surname 2, Surname 3, ...., Surname n}
\ifaistats
\twocolumn[
\aistatstitle{Locally Accelerated Conditional Gradients}
\aistatsauthor{ Anonymous Author(s) }
\aistatsaddress{  } ]
\else
\title{Locally Accelerated Conditional Gradients}
\author{Jelena Diakonikolas\\
Univeristy of California, Berkeley\\
\texttt{jelena.d@berkeley.edu}
\and 
Alejandro Carderera\\
Georgia Institute of Technology\\
\texttt{alejandro.carderera@gatech.edu}
\and 
Sebastian Pokutta\\
Zuse Institute Berlin and Technische Universität Berlin\\
\texttt{pokutta@zib.de}}
\maketitle
\fi

\begin{abstract}
Conditional gradients constitute a class of 
\emph{projection-free} first-order algorithms for smooth convex optimization. As such, they are frequently used in solving smooth convex optimization problems over polytopes, for which the computational cost of orthogonal projections would be prohibitive. However, they do not enjoy the optimal convergence rates achieved by projection-based accelerated methods; moreover, \emph{achieving such globally-accelerated rates is information-theoretically impossible} for these methods. To address this issue, we present \emph{Locally Accelerated Conditional Gradients} -- an algorithmic framework that couples accelerated steps with conditional gradient steps to achieve \emph{local} acceleration on smooth strongly convex problems. Our approach does not require projections onto the feasible set, but only on (typically low-dimensional) simplices, thus keeping the computational cost of projections at bay. Further, it achieves the \emph{optimal accelerated local convergence}. Our theoretical results are supported by numerical experiments, which demonstrate significant speedups of our framework over state of the art methods in both per-iteration progress and wall-clock time. 
\end{abstract}

\section{Introduction}
Smooth convex optimization problems over polytopes are fundamental optimization problems that arise in a variety of settings, such as video co-localization~\cite{joulin2014efficient} and  structured energy minimization~\cite{swoboda2019map} in computer vision applications, greedy particle optimization in Bayesian inference~\cite{futami2019bayesian}, and structural SVMs~\cite{lacoste2013block}. The methods of choice in such settings are variants of the classical conditional gradient (CG) or Frank-Wolfe method~\cite{frank1956algorithm}, which eschew often computationally-prohibitive projection operations of standard first-order methods while enjoying favorable characteristics of the produced solutions such as sparse representation.  

Despite their simplicity and broad applicability, CG  methods do not attain the accelerated convergence of projection-based methods such as Nesterov accelerated gradient descent~\cite{nesterov19830} and variants thereof~\cite{AXGD,cohen2018acceleration,tseng2008,beck2009fast}. This limitation comes as a consequence of the access to the feasible polytope $\cx$ being restricted to a linear minimization oracle: in such a setting, global acceleration is information-theoretically impossible~\cite{jaggi2013revisiting,lan2013complexity} and improving the (global) rate of convergence of CG methods in various structured settings has been an active area of research~\cite{jaggi2013revisiting,garber2013linearly,lacoste2015global,LDLCC2016,lan2017conditional,freund2017extended,BPZ2017,bashiri2017decomposition,locatello2018revisiting,kerdreux2018frank,kerdreux2018restarting,garber2018fast,braun2018blended,guelat1986some}. While exciting progress has been made on a number of fronts, global convergence rate of CG cannot be dimension-independent in the worst case.

In this work, we depart from the path of improving global convergence guarantees of CG and instead ask:
\begin{center}
    \emph{Is local dimension-independent acceleration possible?}
\end{center}
To address this question, we focus on smooth strongly convex objective functions and introduce the \emph{Locally Accelerated Conditional Gradients} algorithmic framework that achieves the optimal dimension-independent locally accelerated convergence rate. 

%%%%%%%%%%%%%%%%% END COMMENT

\subsection{Limits to Global Acceleration}\label{sec:lower-bound}
Acceleration for conditional gradient methods has been an important topic of interest. Apart from several technical challenges, there is a strong lower bound~\cite{jaggi2013revisiting,lan2013complexity} that significantly limits what type of acceleration is achievable. This lower bound applies to arbitrary methods  whose access to the feasible region is limited to a linear optimization oracle. As an illustration, let $\cx \defeq \{\vx \in \rr^n \mid \sum_{i =1}^n x_i = 1, \vx \geq 0\}$ be the probability simplex on $n$ coordinates and consider the problem
\begin{equation}\label{eq:LB}
\tag{LB}
\min_{\vx \in \cx} f(\vx) \equiv \norm{\vx}_2^2.
\end{equation}
If a first-order method has  access to $\cx$ only by means of a linear optimization oracle, then each query to the oracle reveals a single vertex of $\cx$ (a standard basis vector in the case of simplex), and it is guaranteed that after $k < n$ iterations the primal gap satisfies:
$$
f(\vx_k) - f(\vx^*) \geq \frac{1}{k} - \frac{1}{n}.
$$
In particular, for $k = n/2$ (assuming w.l.o.g.~that $n$ is even), the primal gap is  bounded below by $f(\vx_{n/2}) - f(\vx^*) \geq \frac{1}{n}$, and even the $O(1/k^2)$ rate cannot be achieved in full generality. %an accelerated rate of $O(1/k^2)$ cannot be achieved in full generality. 

Note that the objective in~\eqref{eq:LB} is also strongly convex. Thus, if we have a generic algorithm that is linearly convergent, contracting the primal gap as $f(\vx_k)  - f(\vx^*) \leq e^{-\rho k}(f(\vx_0)  - f(\vx^*))$ with a \emph{global rate} $\rho$, then it follows that $\rho \leq 2 \frac{\log n}{n}$ via the lower bound  above. The commonly used \emph{Fully-Corrective Frank-Wolfe} algorithm converges with (roughly) $\rho = \frac{1}{2n}$% for problem\eqref{eq:LB}
~\cite{lacoste2015global}, and similar rates apply for \emph{Away-Step Frank-Wolfe} (AFW)~\cite{guelat1986some,lacoste2015global} and \emph{Pairwise Conditional Gradients} (PFW)~\cite{mitchell1974finding,lacoste2015global}. Given the lower bound, it follows  that up to logarithmic factors these \emph{global} rates cannot be further improved and, in particular, acceleration with the rate parameter $\rho = \sqrt{\nicefrac{1}{(2n)}}$ is not possible. 

At the same time, it is known that, e.g., AFW can be globally accelerated using the~\emph{Catalyst} framework~\cite{lin2015universal} (see also a discussion in \cite{lacoste2015global}). However, the obtained accelerated rate involves large dimension-dependent constants to be compatible with the lower bound, making the algorithm impractical. Another form of acceleration in the case of linear optimization based methods is achieved by \emph{Conditional Gradient Sliding}~\cite{lan2016conditional}. In~\cite{lan2016conditional}, the complexity is separated into calls to a first-order oracle and calls to the linear optimization oracle. While the optimal first-order oracle complexity of $O(1/\sqrt{\epsilon})$ is achieved in the case of smooth (non-strongly) convex minimization, the  linear optimization oracle complexity is $O(1/\epsilon)$, compatible with the lower bound.

\subsection{Contributions and Related Work}

We show that \emph{dimension-independent} acceleration for CG methods \emph{is possible} for smooth strongly convex minimization after a burn-in phase whose length does not depend on the target accuracy $\epsilon$ (but could potentially depend on the dimension). This allows for local acceleration, achieving the \emph{asymptotically optimal rate}. Our contributions are summarized as follows.

%\begin{enumerate}
%\item \emph
\paragraph{LaCG.} We introduce a new algorithmic framework for the class of conditional gradient methods, which we dub \emph{Locally Accelerated Conditional Gradients (LaCG)}. The framework couples active-set-based linearly convergent CG methods for smooth  strongly convex minimization (such as, e.g., AFW or PFW) with the accelerated algorithm $\mu$AGD+~\cite{cohen2018acceleration}. The coupling ensures that the resulting sequence of algorithm updates makes at least as much progress as the employed CG-type method, while being able to seamlessly transition to faster locally-accelerated convergence once suitable conditions are met and without ever having to explicitly test such conditions. %The accelerated sequence is partially restarted depending on the condition of the active set of our Frank-Wolfe type iterations and the algorithm makes monotonic primal progress. 
LaCG %The resulting algorithm 
achieves an asymptotically optimal iteration complexity of $K + O\left(\sqrt{\frac{L}{\mu}} \log \frac{1}{\epsilon}\right)$ to solve $\min_{\vx \in \cx}  f(\vx)$ up to accuracy $\epsilon$, where $f$ is $L$-smooth and $\mu$-strongly convex and $K$ is a constant that only depends on $\cx$ and $f$. 

Slightly simplifying, we achieve acceleration once we identify the optimal face of $\cx$ and are reasonably close to the optimal solution. Our reported complexity depends on the actual smoothness $L$ and strong convexity $\mu$ parameters of $f$ \emph{independent of the dimension of $\cx$}. This stands in contrast to the previously reported  linearly convergent methods, whose problem parameters had to be adjusted %rather than 
%derived constants 
%that have been adjusted 
to account for the geometry of $\cx$, e.g., via the \emph{pyramidal width} (see \cite{lacoste2015global}) or \emph{smoothness and strong-convexity relative to the polytope} \cite{pena2018polytope,gutman2019condition}, which both bring in a dimension dependence. Note that such dimension-dependent terms are \emph{unavoidable} if global linear rates of convergence are sought, due to the aforementioned lower bound. 

We bypass the limitations of global linear convergence by accelerating the methods \emph{locally}: the faster convergence applies after a constant number of burn-in iterations with possibly weaker rates. %, and as such is not subject to the lower bound. 
Following the burn-in phase, the method requires no more than $O\left(\sqrt{\frac{L}{\mu}} \log \frac{1}{\epsilon}\right)$ queries to either the first-order oracle or the linear minimization oracle. 
This dimension-independent acceleration following a slower phase can be clearly observed in our numerical experiments, particularly in comparison with Catalyst-augmented AFW. 
As discussed before, Catalyst acceleration is necessarily weakened by a geometric correction leading to the (either first-order or linear optimization) oracle complexity of $O\left(\sqrt{\frac{L}{\mu}} \frac{D^2}{\delta^2} \log \frac{1}{\epsilon}\right)$, where $D$ is the diameter of $\cx$ and $\delta$ is the pyramidal width of $\cx$; see \cite{lacoste2015global}.

%However, in contrast, our rate is \emph{local} (i.e., holds after a constant number of burn-in iterations with possibly weaker rates) and as such is not subject to the lower bound, allowing us to achieve much faster local convergence of $O\left(\sqrt{\frac{L}{\mu}} \log \frac{1}{\epsilon}\right)$ both for first-order access \emph{and} linear optimization access after the burn-in. We demonstrate this also empirically, comparing our algorithm to a Catalyst-augmented Away-Step Frank-Wolfe algorithm, which also achieves acceleration, \emph{however} with respect to the functional constants $L$ and $\mu$ weakened by a geometric correction leading to a rate equivalent of $O\left(\sqrt{\frac{L}{\mu}} \frac{D^2}{\delta^2} \log \frac{1}{\epsilon}\right)$, where $D$ is the diameter of $\cx$ and $\delta$ is the pyramidal width of $\cx$; see \cite{lacoste2015global}. 

To achieve local acceleration, we assume that we can project relatively efficiently onto simplices spanned by sets of vertices of small size. However, while we do employ projections onto the convex hulls of maintained active sets, we stress that the feasible region is only accessed via a linear optimization oracle, i.e., our method is an LO-based method and the active sets are typically small, so that projection onto those sets is  cheap. In this sense of employing projections \emph{internally} but not over the original feasible region, our algorithm is similar to \emph{Conditional Gradient Sliding}~\cite{lan2017conditional}. 

\paragraph{Generalized Accelerated Method.} While there is an extensive literature on accelerated methods in optimization (see, e.g.,~\cite{betancourt2018symplectic,cohen2018acceleration,AXGD,tseng2008,beck2009fast,Bubeck2015,nesterov2018introductory,drusvyatskiy2016optimal,polyak1964some}), none of these approaches directly applies to local acceleration of CG. Most relevant to our work is~\cite{cohen2018acceleration}, and, in the process of constructing LaCG, we generalize the  $\mu$AGD+~\cite{cohen2018acceleration} algorithm in a few important ways (note that $\mu$AGD+ is also a generalization of Nesterov's method). 

First, we show that $\mu$AGD+  retains its convergence guarantees when coupled with an arbitrary alternative algorithm, where the coupling selects the point with the lower function value between the two algorithms, in each iteration. This is crucial for turning $\mu$AGD+ into a descent method and ensures that it makes at least as much progress per iteration as the CG-type method with which it is coupled. This coupling also allows us to achieve acceleration without any explicit knowledge of the parameters of the polytope $\cx$ or the position of the function minimizer $\vx^*.$ 

Second, we show that $\mu$AGD+  tolerates inexact projections. While this is not surprising, as similar results have been shown in the past for proximal methods~\cite{schmidt2011convergence}, this generalization of $\mu$AGD+ is a necessary ingredient to ensure that the practical per-iteration complexity of LaCG does not become too high. 

Finally, we prove that $\mu$AGD+ converges to the optimal solution at no computational loss even if the convex set on which the projections are performed changes between the iterations, as long as the convex set in iteration $k$ is contained in the convex set from iteration $k-1$ and it contains the minimizer $\vx^*.$ We are not aware of any other results of this type. Note that this result allows us to update the projection simplex with each update of the active set, and, as vertices are dropped from the active set by away steps in AFW or PFW, the iterations become less expensive.

\paragraph{Computational Experiments.} We compare our methods to other CG methods as well as a Catalyst-augmented AFW and provide computational evidence that our algorithms achieve a practical speed-up, both in per-iteration progress and in wall-clock time, significantly outperforming state of the art methods.

\section{Preliminaries}
\label{sec:prelim}
We consider problems of the form %want to solve
$
%\label{prob:main}
\min_{\vx \in \cx} f(\vx),
$
where $f$ is a smooth (gradient Lipschitz) strongly convex function and $\cx \subseteq \rr^n$ is a convex polytope. 
%over a polytope $\cx \subseteq \rr^n.$ 
%\todo{we need the polytope because otherwise we the running time identify the optimal face makes no sense among other things}
We assume (i) first-order access to $f$: given $\vx \in \cx$, we can compute $f(\vx)$ and $\nabla f(\vx)$, and (ii) linear optimization oracle access to $\cx$: given a vector $\vc \in \rr^n$, we can compute
$
\vv = \argmin_{\vu \in \cx} \innp{\vc, \vu}.
$

Let $\norm{\cdot}$ be the Euclidean norm and let $\cb(\vx,r)$ denote the ball around $\vx$ with radius $r$ with respect to $\norm{\cdot}$. We say that $\vx$ is \emph{$r$-deep} in a convex set $\cx \subseteq \rr^n$ if $\cb(\vx,2r) \cap \Aff(\cx) \subseteq \cx$. The point $\vx$ is contained in the \emph{relative interior of $\cx$}, written as $\vx \in \relinterior(\cx)$, if there exists an $r>0$ such that $\vx$ is $r$-deep in $\cx$; if $\Aff(\cx) = \rr^n$, then $\vx$ is contained in the \emph{interior of $\cx$}, written as $\vx \in \interior(\cx)$. 

Further, given a polytope $\cx$, let $\vertex(\cx) \subseteq \cx$ denote the (finite) set of vertices of $\cx$ and, given a point $\vx \in \cx,$ let $\ct(\vx)$ denote the smallest face of $\cx$ containing $\vx$, which is defined as a subset of $\vertex(\cx)$ of minimal cardinality whose convex hull contains $\vx$. Finally, let $\Delta_n \defeq \{\vx \in \rr^n \mid \sum_{i = 1}^n x_i = 1, x \geq 0\} \subseteq \rr^n $ denote the \emph{probability simplex of dimension $n$}. %, and let $\Xi_n \defeq \Aff(\Delta_n) = \{\vx \in \rr^n \mid \sum_{i = 1}^n x_i = 1\}\subseteq \rr^n$ denote its affine space. 

\subsection{Conditional Gradient Descent}
We provide a very brief introduction to the \emph{Conditional Gradient Descent algorithm} \cite{levitin1966constrained} (also known as \emph{Frank-Wolfe algorithm}; see \cite{frank1956algorithm}). Assume that $f$ is $L$-smooth with $L < \infty$. The Frank-Wolfe algorithm with step sizes $\eta_k \in [0, 1]$ is defined via the following updates:
\begin{equation}\label{eq:standard-CG-step}
\vx_{k+1} = (1-\eta_k) \vx_k + \eta_k \vv_k = \vx_k + \eta_k(\vv_k - \vx_k),
\end{equation}
where $\vx_0 \in \cx$ is an arbitrary initial point from the feasible set $\cx$ and $\vv_k$ is computed using the linear optimization oracle as:  
$$
\vv_k = \argmin_{\vu \in \cx} \innp{\nabla f(\vx_{k}, \vu}.
$$

\begin{comment}

Assume that $f$ is $L$-smooth and $\mu$-strongly convex for $\mu > 0.$ Let $\vx^* = \argmin_{\vx \in \cx}f(\vx)$ and assume that the (relative) ball centered at $\vx^*$ of radius $2r >0$ is contained in 
%the feasible set 
$\cx$. 
Then the following holds:

\begin{lemma}\label{lemma:GM}\cite{guelat1986some}
If $\vx^*$ is contained $2r$-deep in the relative interior of $\cx$, i.e., ${\cal B}(\vx^*,2r)\cap \mathrm{\Aff}(\cx)\subseteq\cx$ for some $r > 0$, then there exists some $k_0$ so that for all $k \geq k_0$ it holds
$\frac{\innp{\nabla f(\vx_k)\vx_k - \vv_k}}{\|\vx_k - \vv_k\|} \geq \frac{r}{D}\|\nabla f(\vx_k)\|$, where $D$ denotes the diameter of the polytope.
\end{lemma}
\end{comment}

\subsection{Approximate Duality Gap Technique}\label{sec:ADGT-description}

To analyze the convergence of LaCG, we employ the \emph{Approximate Duality Gap Technique (ADGT)} \cite{thegaptechnique}. The core idea behind ADGT is to ensure that $A_k G_k$ is non-increasing with iteration count $k,$ where $G_k$ is an upper approximation of the optimality gap (namely, $f(\vx_k) - f(\vx^*) \leq G_k$, where $\vx_k$ is the solution point output by the algorithm at iteration $k$), and $A_k$ is a positive strictly increasing function of $k$. If such a condition is met, we immediately have $A_k G_k \leq A_0G_0,$ which implies $f(\vx_k) - f(\vx^*) \leq \frac{A_0 G_0}{A_k}.$ Thus, as long as $A_0 G_0$ is bounded (it typically corresponds to some initial distance to the minimizer $\vx^*$), we have that the algorithm converges at rate $1/A_k.$ This also means that, to obtain the highest rate of convergence, one should always aim to obtain the fastest-growing $A_k$ for which it holds that $A_k G_k \leq A_{k-1}G_{k-1},$ for all $k.$

The approximate gap $G_k$ is defined as the difference of an upper bound $U_k$ on the function value at the output point $\vx_k,$ $U_k \geq f(\vx_k),$ and a lower bound $L_k$ on the minimum function value, $L_k \leq f(\vx^*).$ Clearly, this choice ensures that $f(\vx_k) - f(\vx^*) \leq G_k.$ In all the algorithms analyzed in this paper, we will use $U_k = f(\vx_k)$. The lower bound requires more effort; however, it is similar to those used in previous work~\cite{cohen2018acceleration,thegaptechnique}, and its detailed construction is provided in Appendix~\ref{appx:adgt}.  

Because of its generality, ADGT is well-suited to our setting, as it allows coupling different types of steps (different variants of CG and accelerated steps) and performing a more fine-grained and local analysis than typical approaches. Further, it allows accounting for inexact minimization oracles invoked as part of the algorithm subroutines in a generic way. 
%\todo{very short introduction here + advantages, such as relative simple mixing of steps and much more tailored acceleration schemes, and robustness; cite JD's noise robustness paper here.}

\section{LaCG Framework}
\label{sec:accg}

In this section, we establish our main result. %Let $\cx$ be a polytope. %% Already introduced -- no need to repeat this assumption
We first consider
a simple case in which %where the optimal solution satisfies 
$\vx^* \in \interior(\cx )$ as a
warm-up to explain our approach and derive LaCG specifically for this case (Section~\ref{sec:warm-optim-inter}). Then, in
Section~\ref{sec:fullResult}, we consider the more general
case where $\vx^* \in \relinterior(\cf)$ with $\cf$ being a face of
$\cx$. Together  Section~\ref{sec:warm-optim-inter},
this covers all cases of interest (except some degenerate cases).
Further, the general LaCG framework presented in Section~\ref{sec:fullResult} applies to either of the two cases.

%%%%%%%%%%%%%%%%%%
\subsection{Warm-up: Optimum in the Interior of $\cx$}
\label{sec:warm-optim-inter}
In the case where the optimum is contained in the interior of $\cx,$
we have $\nabla f(\vx^*) = \zeros$. As such, the unconstrained optimum
and the constrained optimum coincide. Thus, one might be tempted to
assert that there is no need for an accelerated CG algorithm
in this case. However, whether the optimum is contained in the
interior is not known \emph{a priori}. The presented algorithm is adaptive, as
it accelerates if $\vx^* \in \interior(\cx),$ and
otherwise it converges with the standard $1/k$ rate.  % Moreover, this simple
% case serves as a nice warm-up to introduce our analysis of mixing
% Frank-Wolfe-style steps with accelerated steps through ADGT.

The main idea can be summarized as follows. Suppose that %the function minimizer 
$\vx^*$ is contained $2r$-deep in the \emph{interior} of $\cx.$ Due to the function's strong convexity, any method that contracts the optimality gap $f(\vx_k) - f(\vx^*)$ over $k$ also contracts the distance $\|\vx_k - \vx^*\|.$ In particular, this follows by:
$
\frac{\mu}{2}\|\vx_k - \vx^*\|^2 \leq f(\vx_k) - f(\vx^*).
$ 
%as $\vx^*$ belongs to the interior of $\cx,$ and, thus $\nabla f(\vx^*) = 0.$
%
Hence, roughly, after an iterate $\vx_k$ is guaranteed to be inside the $2r$-ball, we can switch to a faster (accelerated) method for \emph{unconstrained} minimization. This idea, however, requires a careful formalization, for the following reasons:
\begin{enumerate}
\item In general, we cannot assume that the algorithm has knowledge of
  $r$ and $D$, or access to information on $\|\vx_k - \vx^*\|,$ as $\vx^*$
  is unknown -- it is what the algorithm is trying to determine.
    \item The algorithm should be able to converge even if $r = 0$;
      we cannot in general assume that it is a priori known that
      $\vx^* \in \interior(\cx).$ Because, if that were the case and
      we knew that $r \geq \epsilon$, 
     we would be able to run an
      accelerated algorithm for $O(\sqrt{L/\mu}\log(1/\epsilon))$
      iterations, without any need to worry about whether the solution
      that the algorithm outputs belongs to $\cx.$
\end{enumerate}

We show that both issues can be resolved by implementing a monotonic
version of a hybrid algorithm that can choose at each iteration
whether to perform an accelerated step or a CG step from Eq.~\eqref{eq:standard-CG-step}. Monotonicity is crucial to ensure contraction of the
distance to the optimal solution. In \emph{this subsection only}, we
assume that the algorithm has access to a membership
oracle for $\cx$ -- namely, that for any point $\vx$ it can
determine whether $\vx \in \cx$. This is generally a mild assumption,
especially when $\cx$ is a polytope, which is a standard setting for
CG.\footnote{For generic LP solvers applied to
  polynomially-sized LPs, checking membership amounts to evaluating
  the linear (in)equalities describing $\cx$, which is typically
   much cheaper than linear optimization. For structured LPs that are solvable in polynomial time
  but have exponential representation (e.g., matching over
  non-bipartite graphs~\cite{rothvoss2017matching}), there typically
  exist %algorithms implementing 
  membership oracles with running times
  that are comparable to the running time of the corresponding
  minimization oracle used in CG steps
  (e.g.,~\cite{fleischer2006polynomial}).} %In that case, the algorithm just needs to evaluate the linear (in)equalities describing $\cx,$ which takes (at most) linear time in the input size. Note that a linear minimization oracle over a polytope (which is necessary for determining steps of Frank-Wolfe) typically takes polynomial-time in the input size and is much more computationally expensive than simply determining membership.
%\todo[inline]{JD: I think that, for space considerations, most of this section can go to the appendix. We can just state the theorem at this point and refer the reader to the appendix.
%SP: agreed.}
The convergence of the resulting algorithm is summarized in the following theorem. Full technical details are deferred to Appendix~\ref{appx:omitted-proofs}.
\begin{restatable}{theorem}{prelimthm}
Let $\vx_k$ be the solution output by Algorithm~\ref{algo:acc-FW} (Appendix~\ref{appx:prelim-algo}) for $k \geq 1$. If:
$$
k \geq \min \left\{ \frac{2LD^2}{\epsilon},\; \frac{LD^2}{\mu r^2} + \sqrt{\frac{L}{\mu}} \log \left( \frac{2(L+\mu) r^2}{\mu\epsilon}\right)\right\},
$$
then 
$
f(\vx_k) - f(\vx^*) \leq \epsilon.
$
\end{restatable}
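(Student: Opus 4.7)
The plan is to split the argument into two regimes corresponding to the two terms inside the minimum: a pure conditional gradient regime that yields the $O(LD^2/\epsilon)$ bound, and a hybrid regime in which a burn-in phase of conditional gradient steps is followed by accelerated steps, yielding the $O(LD^2/(\mu r^2) + \sqrt{L/\mu}\log(\cdot))$ bound. Since the algorithm is monotone and at each iteration chooses the better of the two updates, both bounds hold simultaneously.

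For the first bound, I would argue that the monotone hybrid inherits the standard Frank-Wolfe guarantee. Since the algorithm, at each step, takes the minimum of $f$ at the CG iterate and at the accelerated iterate (when the latter is feasible), the sequence $\{f(\vx_k)\}$ is upper-bounded by that of plain CG. The standard smoothness-based telescoping argument for Frank-Wolfe then gives
\[
f(\vx_k) - f(\vx^*) \le \frac{2LD^2}{k},
\]
which immediately yields $f(\vx_k) - f(\vx^*)\le\epsilon$ once $k \geq 2LD^2/\epsilon$.

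For the second bound, I would use the strong convexity inequality $\tfrac{\mu}{2}\|\vx_k - \vx^*\|^2 \le f(\vx_k) - f(\vx^*)$ to convert primal-gap contraction into distance-to-optimum contraction. Combining this with the CG rate from the previous paragraph shows that after at most $k_1 \defeq LD^2/(\mu r^2)$ iterations, $\|\vx_{k_1} - \vx^*\| \le \sqrt{2}\, r < 2r$, so $\vx_{k_1}$ lies inside the ball $\cb(\vx^*, 2r) \subseteq \cx$ guaranteed by the hypothesis that $\vx^*$ is $2r$-deep in the interior. From this point on, I would argue that the accelerated sub-routine ($\mu$AGD+) stays inside $\cx$: the membership oracle ensures that infeasible accelerated candidates are discarded in favor of the CG step (preserving monotonicity and feasibility), while the ADGT-based analysis of $\mu$AGD+ gives that the primal gap after $k_2$ additional iterations contracts as $(1-\sqrt{\mu/L})^{k_2}$ times the initial gap, which is at most $\tfrac{L+\mu}{2}\|\vx_{k_1}-\vx^*\|^2 \le (L+\mu)r^2$ by smoothness and strong convexity. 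Setting this $\le \epsilon$ yields $k_2 = O\!\left(\sqrt{L/\mu}\,\log\!\tfrac{2(L+\mu)r^2}{\mu\epsilon}\right)$, and summing $k_1 + k_2$ gives the second term in the claimed bound.

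The main obstacle, and the reason the coupling with the membership oracle matters, is verifying that once we are in the "accelerated" phase the whole trajectory of auxiliary accelerated points (not just the returned $\vx_k$) stays feasible so that we can legitimately invoke the unconstrained $\mu$AGD+ guarantee. I would address this by invoking the generalized accelerated-method result alluded to in the introduction: since the coupled algorithm selects the better of the accelerated point and the CG point each iteration, the convergence guarantees of $\mu$AGD+ still apply to the hybrid sequence, so monotonic contraction of $f(\vx_k) - f(\vx^*)$ combined with strong convexity keeps $\vx_k$ inside $\cb(\vx^*,2r)\subseteq\cx$ for all subsequent iterations, making the accelerated updates feasible whenever they are selected. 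All missing calculations (the exact FW descent inequality, the exact ADGT potential for $\mu$AGD+, and the relation between initial gap and $\|\vx_{k_1} - \vx^*\|$) are routine and I would defer them to the appendix.
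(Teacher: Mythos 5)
Your decomposition is exactly the paper's: the monotone hybrid inherits the plain Frank--Wolfe rate (giving the $2LD^2/\epsilon$ term), strong convexity converts the primal-gap bound after $K_0 = LD^2/(\mu r^2)$ burn-in iterations into $\|\vx_{K_0}-\vx^*\|\le 2r$ (this is Proposition~\ref{prop:initial-steps}; note your $\sqrt{2}\,r$ should be $2r$), and an ADGT analysis of the accelerated recursion gives the $(1-\sqrt{\mu/L})$ contraction afterwards (Lemma~\ref{lemma:acc-conv}).

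There is one step you defer as ``routine'' that is in fact the only non-routine part of the argument. You treat the accelerated phase as if it begins at iteration $k_1$ with initial potential $\tfrac{L+\mu}{2}\|\vx_{k_1}-\vx^*\|^2 = O\bigl((L+\mu)r^2\bigr)$. But the algorithm has no way to restart at $k_1$ --- it does not know $r$ --- so the accelerated sequence is anchored at the \emph{last} iteration $k_0 \le K_0$ at which the else branch was entered, and the ADGT potential at that anchor is $\tfrac{L+\mu}{2}\|\vx_{k_0}-\vx^*\|^2$, which can be of order $(L+\mu)D^2$ rather than $(L+\mu)r^2$. The paper closes this by showing that the contraction already accumulated between $k_0$ and $K_0$ compensates for the larger anchor distance, i.e.\ that $\bigl(1-\sqrt{\mu/L}\bigr)^{K_0-k_0}\,\mu\|\vx_{k_0}-\vx^*\|^2 \le \bigl(1-\sqrt{\mu/L}\bigr)^{K_0-k_0}\tfrac{4LD^2}{k_0+4} \le 4r^2$ uniformly over $k_0 \le K_0$. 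Without some version of this reconciliation your second term does not follow from the algorithm as written. A second, smaller point: to invoke the uninterrupted accelerated recursion after $K_0$ you need the \emph{auxiliary} iterate $\vxh_{k+1}$ (not just the returned $\vx_k$) to pass the membership test for all $k\ge K_0$; your argument only controls $\|\vx_k-\vx^*\|$. The paper is admittedly also brief here, but the claim that the else branch is never entered after $K_0$ is what licenses the clean geometric rate, and it deserves an explicit sentence.
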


\subsection{Optimum in the Relative Interior of a Face of $\cx$}
\label{sec:fullResult}

We now formulate the general case that subsumes the case from the previous subsection. Note that when $\vx^*$ is in the relative interior, it may not be the case that $\nabla f(\vx^*) = \zeros$ anymore; for an example of when this happens, consider the lower bound instance~\eqref{eq:LB} from Section~\ref{sec:lower-bound}. \ifaistats Due to space constraints, w\else W\fi e only state the main ideas here, while full technical details are deferred to Appendix~\ref{appx:fullResult}.  %limitations we are only able to state the algorithm and main result together with key ideas here and technical details and proofs have been relegated to Appendix~\ref{appx:fullResult}. 

We assume that, given points $\vx_1, ..., \vx_m$ and a point $\vy,$ the following  problem is easily solvable:
\begin{equation} \label{eq:convProject}
  \min_{\substack{\vu = \sum_{i=1}^m \lambda_i \vx_i, \\ \vlambda \in \Delta_m}} \frac{1}{2}\|\vu - \vy\|^2. \end{equation}
In other words, we assume that the projection onto the convex hull of a given set of vertices can be implemented efficiently; however, we do not require access to a membership oracle anymore. Solving this problem amounts to minimizing a quadratic function over the probability simplex.  The size of the program $m$ from Eq.~\eqref{eq:convProject} corresponds to the size of the active set of the CG-type method employed within LaCG. Note that $m$ is never larger than the iteration count $k,$ and is often much lower than the dimension of the original problem. Further, there exist multiple heuristics for keeping the size of the active set small in practice (see, e.g.,~\cite{BPZ2017}). The projection from Eq.~\eqref{eq:convProject} does not require access to either the first-order oracle or the linear optimization oracle. Finally, due to Lemma~\ref{lemma:modified-agdp} stated below, we only need to solve this problem to accuracy of the order $\frac{\epsilon}{\sqrt{\mu L}}$, where $\epsilon$ is the target accuracy of the program. 

%%%%%%%%%%%%%%%%%%%%%%%%%%%%%%%%%%%%%%% BEGIN COMMENT
\begin{comment}
\begin{algorithm}
\caption{Away-Step Frank-Wolfe Iteration: AFW($\vlambda, \cs, \vx$)}
\label{algo:away-FW}
\begin{algorithmic}[1]
\State Set FW direction: $\vs = \argmin_{\vu \in \cx}\innp{\nabla f(\vx), \vu},$ $\vd^{\mathrm{FW}} = \vs - \vx$
\State Set Away direction: $\vv = \argmax_{\vu \in \cs}\innp{\nabla f(\vx), \vu},$ $\vd^{\mathrm{A}} = \vx - \vv$
\If{ $\innp{-\nabla f(\vx), \vd^{\mathrm{FW}}} \geq \innp{-\nabla f(\vx), \vd^{\mathrm{A}}}$}
\State $\vd = \vd^{\mathrm{FW}},$ $\gamma_{\max} = 1$
\Else 
\State $\vd = \vd^{\mathrm{A}},$ $\gamma_{\max} = \frac{\vlambda(\vv)}{1-\vlambda(\vv)}$
\EndIf
\State $\gamma' = \argmin_{\gamma \in [0, \gamma_{\max}]}f(\vx + \gamma \vd)$
\State $\vx' = \vx + \gamma' \vd$; update $\vlambda$ (to $\vlambda'$)
\State $\cs' = \{\vu \in \cs \cup \{\vs\}: \vlambda'(\vu) > 0\}$
\State \textbf{return} $\vx', \, \cs', \, \vlambda'$ 
\end{algorithmic}
\end{algorithm}
\end{comment}
%%%%%%%%%%%%%%%%%%%%%%%%%%%%%%%%%%% END COMMENT

For simplicity, we illustrate the framework using AFW as the coupled CG method. However, the same ideas can be applied to other active-set-based methods such as PFW in a straightforward manner. Unlike in the previous subsection, the assumption that $\cx$ is a polytope is crucial here, as the linear convergence for the AFW algorithm established in~\cite{lacoste2015global} relies on a
constant, the \emph{pyramidal width}, that is only known to be bounded away
from $0$ for polytopes. For completeness, we provide the pseudocode for one iteration of
AFW (as stated in~\cite{lacoste2015global}) in Algorithm~\ref{algo:away-FWAppx},  Appendix~\ref{appx:fullResult}.

A simple observation, which turns out to be key for the coupling to work is that when running  AFW, there exists an iteration \(K_0\)  such that for all $k \geq K_0$ it holds $\vx^* \in \co(\cs_k^{\mathrm{AFW}})$, where $\cs_k^{\mathrm{AFW}}$ denotes the active set maintained by AFW in iteration $k$. This iteration \(K_0\) only depends on the feasible region \(\cx\) and \(\vx^*\) and, as such, it is a burn-in period of constant length.

\iffalse
Our LaCG %\emph{Locally Accelerated Conditional Gradient} 
algorithm is a hybrid version of
the Away-Step Frank-Wolfe (AFW) algorithm and an accelerated algorithm over the
convex hull of certain active sets. While we perform the
analysis for the Away-Step Frank-Wolfe variant, it can also be 
extended to combine \emph{Pairwise Conditional Gradients} with
accelerated steps. The assumption that we only consider polytopes is important as the linear convergence for the AFW algorithm established in~\cite{lacoste2015global} relies on a
constant, the \emph{pyramidal width}, that is only known to be bounded away
from $0$ for polytopes. For completeness, we provide the pseudocode for one iteration of
AFW (as stated in~\cite{lacoste2015global}) in Algorithm~\ref{algo:away-FWAppx} In Appendix~\ref{appx:fullResult}.
\fi
% The AFW algorithm starts with an
% arbitrary point $\vx_0$ from the feasible set and active set
% $\cs_0 = \{\vx_0\}$. In the following, the vector
% $\vlambda_k \in \Delta_m$ with $m = |\cs_k|$ denotes the barycentric
% coordinates of the current iterate $\vx_k$ over the active set
% $\cs_k$.

To achieve local acceleration, we couple the AFW steps with a modification of the $\mu$AGD+ algorithm~\cite{cohen2018acceleration} that we introduce here. Unlike its original version~\cite{cohen2018acceleration}, the version  provided here (Lemma~\ref{lemma:modified-agdp}) allows coupling of the method with an arbitrary  sequence of points from the feasible set, it supports inexact minimization oracles, and it supports changes in the convex set (which correspond to active sets from AFW) on which projections are performed. These modifications are crucial to being able to achieve local acceleration without any additional knowledge about the polytope or the position of the minimizer $\vx^*.$ Further, we are not aware of any other methods that allow changes to the feasible set as described here, and, thus, the result from Lemma~\ref{lemma:modified-agdp} may be of independent interest.

%Similar to the algorithm from the previous subsection, LaCG is %the \emph{Locally Accelerated Conditional Gradient} algorithm will be 
%monotonic to ensure that enough progress is made regardless of the setting, and that the distance to the optimal solution is contracting. The particular version of the accelerated algorithm we use here is a modification of $\mu$AGD+ from~\cite{cohen2018acceleration}. Unlike its original version~\cite{cohen2018acceleration}, the version we provide here (Lemma~\ref{lemma:modified-agdp}) allows coupling the method with another optimization method, supports inexact minimization oracles, and supports changes in the convex set (which correspond to active sets from AFW) on which projections are performed. %This lemma is key to our main result stated in Theorem~\ref{thm:main}. %Its proof is provided in Appendix~\ref{appx:fullResult}.
%
\begin{restatable}{lemma}{keylemma}\label{lemma:modified-agdp}
(Convergence of the modified $\mu$AGD+) Let $f: \cx \rightarrow \rr$ be $L$-smooth and $\mu$-strongly convex, and let $\cx$ be a closed convex set.  Let $\vx^* = \argmin_{\vu \in \cx}f(\vx),$ and let $\{\cc_i\}_{i=0}^k$ be a sequence of convex subsets of $\cx$ such that $\cc_i \subseteq \cc_{i-1}$ for all $i$ and $\vx^* \in \bigcap_{i=0}^k \cc_i.$ Let $\{\Tilde{\vx}_i\}_{i=0}^k$ be any (fixed) sequence of points from $\cx.$ Let $a_0 = 1,$ $\frac{a_k}{A_k} = \theta$ for $k \geq 1,$ where $A_k = \sum_{i=0}^k a_i$ and $\theta = \sqrt{\frac{\mu}{2L}}.$ 
Let $\vy_0 \in \cx$, $\vx_0 = \vw_0,$ and $\vz_0 = L\vy_0 -  \nabla f(\vy_0)$. For $k \geq 1,$ define %the sequence of 
iterates $\vx_k$ by:
\begin{equation}\label{eq:modified-magdp}
    \begin{gathered}
        \vy_k = \frac{1}{1+\theta} \vx_{k-1} + \frac{\theta}{1+\theta} \vw_{k-1},\\
        \vz_k = \vz_{k-1} -a_k \nabla f(\vy_k) + \mu a_k \vy_k,\\
        \vxh_k = (1-\theta) \vx_{k-1} + \theta \vw_k,\\
        \vx_k = \argmin \{f(\vxh_k), \, f(\Tilde{\vx}_k)\}
    \end{gathered}
\end{equation}
where, for all $k \geq 0$, $\vw_k$ is defined as an $\epsilon^m_k$-approximate solution of:
\begin{equation}\label{eq:magdp-equiv-min}
    \min_{\vu \in \cc_k} \Big\{ -\innp{\vz_k, \vu} + \frac{\mu A_k + \mu_0}{2}\|\vu\|^2 \Big\},
\end{equation}
with $\mu_0 \defeq L - \mu$. Then, for all $k \geq 0,$ $\vx_k \in \cx$ and:
\begin{align*}
 f(\vx_k) - f(\vx^*)  \leq &\;(1-\theta)^k \frac{(L-\mu)\|\vx^* - \vy_0\|^2}{2} \\ 
 & + \frac{2\sum_{i=0}^{k-1}\epsilon^m_i + \epsilon^m_k}{A_k}.   
\end{align*}
\end{restatable}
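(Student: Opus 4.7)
My plan is to use the Approximate Duality Gap Technique outlined in Section~\ref{sec:ADGT-description}. I would take the standard $\mu$AGD+-style ingredients: as upper bound, $U_k \defeq f(\vxh_k)$ (the monotone coupling then gives $f(\vx_k) \leq U_k$ automatically), and as lower bound
\begin{equation*}
A_k L_k \defeq \min_{\vu \in \cc_k} \Bigl\{ \tfrac{L-\mu}{2}\|\vu - \vy_0\|^2 + \sum_{i=0}^{k} a_i \bigl[ f(\vy_i) + \innp{\nabla f(\vy_i), \vu - \vy_i} + \tfrac{\mu}{2}\|\vu - \vy_i\|^2 \bigr] \Bigr\}.
\end{equation*}
Evaluating at $\vu = \vx^*$ (valid since $\vx^* \in \cc_k$) and invoking $\mu$-strong convexity gives $A_k L_k \leq A_k f(\vx^*) + \tfrac{L-\mu}{2}\|\vx^* - \vy_0\|^2$, so the target bound follows once I control $A_k G_k = A_k(U_k - L_k)$. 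Expanding and collecting terms, the minimization defining $A_k L_k$ is exactly the quadratic subproblem in Eq.~\eqref{eq:magdp-equiv-min}, so $\vw_k$ is its $\epsilon^m_k$-approximate solution. Feasibility $\vx_k \in \cx$ follows by a direct induction since $\vy_k, \vxh_k$ are convex combinations of points in $\cx$ and $\tilde{\vx}_k \in \cx$ by hypothesis.

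I would then bound the per-step increment $A_k G_k - A_{k-1} G_{k-1}$. For the upper part, $L$-smoothness at $\vy_k$ gives $f(\vxh_k) \leq f(\vy_k) + \innp{\nabla f(\vy_k), \vxh_k - \vy_k} + \tfrac{L}{2}\|\vxh_k - \vy_k\|^2$; the specific form of $\vy_k$ and $\vxh_k$ rewrites $\vxh_k - \vy_k$ as a scalar multiple of $\vw_k - \vw_{k-1}$, matching the standard $\mu$AGD+ cancellation pattern. The monotone coupling enters only through $\vx_{k-1}$ (rather than $\vxh_{k-1}$) appearing in $\vy_k$ and $\vxh_k$, and the discrepancy is absorbed using $f(\vx_{k-1}) \leq f(\vxh_{k-1})$ at a single telescoping step. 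For the lower part, writing $Q_k(\vu)$ for the objective in Eq.~\eqref{eq:magdp-equiv-min}, one has $Q_k = Q_{k-1} + a_k\bigl[f(\vy_k) + \innp{\nabla f(\vy_k),\cdot - \vy_k} + \tfrac{\mu}{2}\|\cdot - \vy_k\|^2\bigr]$. Strong convexity of $Q_{k-1}$ with modulus $\mu A_{k-1} + \mu_0$, combined with the first-order optimality of the exact minimizer $\vw_{k-1}^{\mathrm{exact}}$ on $\cc_{k-1}$, produces the required expansion around $\vw_{k-1}^{\mathrm{exact}}$. Critically, the containment $\cc_k \subseteq \cc_{k-1}$ is invoked here: the exact minimizer $\vw_k^{\mathrm{exact}} \in \cc_k$ is still a valid test point for first-order optimality in $\cc_{k-1}$, which is the structural property that makes the shrinking-set argument go through.

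Inexactness enters in two places per step: converting optimality of $\vw_{k-1}^{\mathrm{exact}}$ into optimality of $\vw_{k-1}$ (slack $\epsilon^m_{k-1}$ in function value, which I would handle via the Schmidt-style argument from~\cite{schmidt2011convergence}), and converting $Q_k(\vw_k^{\mathrm{exact}})$ into $Q_k(\vw_k)$ at the end (slack $\epsilon^m_k$). Telescoping the increments and accumulating these errors yields the $2\sum_{i=0}^{k-1}\epsilon^m_i + \epsilon^m_k$ bound. With the prescribed choice $\theta = \sqrt{\mu/(2L)}$ and $a_k/A_k = \theta$, one checks that $A_k \geq (1-\theta)^{-k}$; dividing through by $A_k$ and rearranging the regularizer contribution gives the stated $(1-\theta)^k \tfrac{L-\mu}{2}\|\vx^* - \vy_0\|^2$ leading term.

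The main obstacle will be handling the two new ingredients simultaneously: the monotone coupling perturbs the upper-bound recursion by replacing $\vxh_{k-1}$ with $\vx_{k-1}$, while the shrinking sets $\cc_k$ break the usual ``fixed-domain'' expansion of the lower-bound recursion used in~\cite{cohen2018acceleration}. Ensuring that both modifications coexist without degrading the $(1-\theta)^k$ contraction, and that the error accumulation remains \emph{linear} in $\sum \epsilon^m_i$ rather than quadratic (as is typical for naive error propagation in accelerated methods), is the principal technical difficulty. The nestedness $\cc_k \subseteq \cc_{k-1}$ is exactly the structural property that makes first-order optimality transferable across iterations and is the conceptual heart of the generalization.
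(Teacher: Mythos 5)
Your proposal is correct and follows essentially the same route as the paper's proof: the ADGT potential $A_kG_k$ with the same estimate sequence, the identification of the lower-bound minimization with the subproblem in Eq.~\eqref{eq:magdp-equiv-min}, the transfer of first-order optimality of the exact minimizer of $m_{k-1}$ over $\cc_{k-1}$ to a test point in $\cc_k \subseteq \cc_{k-1}$, and the per-step error budget $\epsilon^m_k + \epsilon^m_{k-1}$ yielding $2\sum_{i=0}^{k-1}\epsilon^m_i + \epsilon^m_k$. The only differences are cosmetic bookkeeping: the paper takes $U_k = f(\vx_k)$ and folds $\frac{L-\mu}{2}\|\vx^*-\vy_0\|^2$ into $G_0$ (so that $L_k \leq f(\vx^*)$ genuinely holds), whereas you take $U_k = f(\vxh_k)$ and recover that term when comparing $L_k$ to $f(\vx^*)$ at the end.
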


To obtain locally accelerated convergence, we  show that from some iteration onwards, we can apply the accelerated method from
Lemma~\ref{lemma:modified-agdp} with $\cc_k$ being the convex hull of the vertices from the active set and the
sequence $\Tilde{\vx}_k$ being the sequence of the AFW steps. The pseudocode for the LaCG-AFW algorithm is provided in
Algorithm~\ref{algo:acc-FW-rel-int}. For completeness, pseudocode for one iteration of the accelerated method (ACC), based on Eq.~\eqref{eq:modified-magdp} is provided in Algorithm~\ref{algo:acc-stepAppx} (Appendix~\ref{appx:fullResult}).

\begin{algorithm*}[tbh]
\caption{Locally Accelerated Conditional Gradients with Away-Step Frank-Wolfe (LaCG-AFW)}
\label{algo:acc-FW-rel-int}
\begin{algorithmic}[1]
\State Let $\vx_0 \in \cx$ be an arbitrary point, $\cs_0^\mathrm{AFW} = \{\vx_0\}$, $\vlambda_0^\mathrm{AFW} = [1]$
\State Let $\vy_0 = \vxh_0 = \vw_0 = \vx_0$, $\vz_0 = -\nabla f(\vy_0) + L \vy_0$, $\cc_1 = \mathrm{co}(\cs_0^\mathrm{AFW})$
\State $a_0 = A_0 = 1,$ $\theta = \sqrt{\frac{\mu}{2L}},$ $\mu_0 = L-\mu$
\State $H = \frac{2}{\theta}\log(1/(2\theta^2) - 1)$ \Comment{Minimum restart period}
\State $r_f = \false$, $r_c = 0$ \Comment{Restart flag and restart counter initialization}
\For{$k=1$ to $K$} 
\State $\vx_k^{\mathrm{AFW}}, \, \cs_k^{\mathrm{AFW}}, \, \vlambda_k^{\mathrm{AFW}} = \mathrm{AFW}(\vx_{k-1}^{\mathrm{AFW}}, \, \cs_{k-1}^{\mathrm{AFW}}, \, \vlambda_{k-1}^{\mathrm{AFW}})$ \Comment{Independent AFW update}
\State $A_k = A_{k-1}/(1-\theta),$ $a_k = \theta A_k$
\State $\vxh_k,\, \vz_k,\, \vw_k = \mathrm{ACC}(\vx_{k-1}, \vz_{k-1}, \vw_{k-1}, \mu, \mu_0, a_k, A_k, \cc_k)$%Compute $\vz_k,\, \vw_k,\, \vy_k,\, \vxh_k$ based on Eq.~\eqref{eq:modified-magdp}
\If{$r_f$ and $r_c \geq H$}% and $f(\vx_k^{\mathrm{AFW}}) \leq f(\vxh_k)$} 
\Comment{Restart
  criterion is met}  \label{restart:project}
\State $\vy_k = \argmin\{f(\vx_k^{\mathrm{AFW}}),\, f(\vxh_k)\}$ %\Comment{Choose Away-Step FW}
\State $\cc_{k+1} = \mathrm{co}(\cs_k^{\mathrm{AFW}})$ \Comment{Updating feasible set for the accelerated sequence}
\State $a_k = A_k = 1$, $\vz_k = -\nabla f(\vy_k) + L\vy_k$ \Comment{Restarting accelerated sequence}
\State $\vxh_k = \vw_k = \argmin_{\vu \in \cc_{k+1}}\{-\innp{\vz_k, \vu} + \frac{L}{2}\|\vu\|^2\}$
\State $r_c = 0$, $r_f = \false$ \Comment{Resetting the restart indicators}
%\State $\cs_k = \cs_k^{\mathrm{AFW}}$
%\Else
%\State $\vx_k = \argmin\{f(\vx_k^{\mathrm{AFW}}), \, f(\vxh_k)\}$ \Comment{Choose the better of the two steps}
%\State \label{line:asUpdate} $\cs_k = \{\vv_i: \lambda_i > 0\}$, $\vlambda_k = \left\{ \vlambda \in \rr^m \mid \vx_k = \sum_i^m \lambda_i  \vv_i, \vv_i \in \cs_k \right\} $
%\State $r_c = r_c + 1$
\Else
\If{$\cs_k^{\mathrm{AFW}} \setminus \cs_{k-1}^{\mathrm{AFW}} \neq \emptyset$} \Comment{If a vertex was added to the active set}
\State $r_f = \true$ \Comment{Raise restart flag}
%\Else
%\Else
\EndIf
\If{$r_f = \false$} \Comment{If AFW did not add a vertex since last restart}
\State $\cc_{k+1} = \mathrm{co}(\cs_k^{\mathrm{AFW}})$ \Comment{Update the feasible set}
\Else 
\State $\cc_{k+1} = \cc_k$ \Comment{Freeze the feasible set}
\EndIf
\EndIf
\State $\vx_k = \argmin\{f(\vx_k^{\mathrm{AFW}}), \, f(\vxh_k),\, f(\vx_{k-1})\}$ \Comment{Choose the better step + monotonicity} \label{Monotonicity2}
\State $r_c = r_c + 1$ \Comment{Increment the restart counter}
\EndFor
\end{algorithmic}
\end{algorithm*}
%%%%%%%%%%%%%%%%%%%%%%%%%%%%%%%%%%%%%%%% BEGIN COMMENT
\begin{comment}
\begin{algorithm}
\caption{Accelerated Step ACC($\vx_{k-1}, \vz_{k-1}, \vw_{k-1}, \mu, \mu_0, a_k, A_k, \cc_k$)}\label{algo:acc-step}
\begin{algorithmic}[1]
\State $\theta = a_k/A_k$s
\State $\vy_{k} = \frac{1}{1+\theta}\vx_{k-1} + \frac{\theta}{1+\theta}\vw_{k-1}$
\State $\vz_k = \vz_{k-1} - a_k \nabla f(\vy_k) + \mu a_k \vy_k,$ $\;\vw_k = \argmin_{\vu \in \cc_k}\{- \innp{\vz_k, \vu} + \frac{\mu A_k + \mu_0}{2}\|\vu\|^2\}$
\State $\vxh_k = (1-\theta)\vx_{k-1} + \theta \vw_k$
\State\Return $\vxh_k, \vz_k, \vw_k$
\end{algorithmic}
\end{algorithm}
\end{comment}
%%%%%%%%%%%%%%%%%%%%%%%%%%%%%%%%%%%%%% END COMMENT

 Our main theorem is stated below, with a proof sketch. The full proof is deferred to Appendix~\ref{appx:fullResult}.

\begin{restatable}{maintheorem}{mainthm}(Convergence analysis of \emph{Locally Accelerated Frank-Wolfe})
\label{thm:main}
%    them. }
  Let $\vx_k$ be the solution output by
  Algorithm~\ref{algo:acc-FW-rel-int} and $r_0$ be the critical radius (see Fact~\ref{fact:asConvergence} in Appendix~\ref{appx:fullResult}).
  %defined as in Proposition~\ref{proposition:BurnIn-Phase}. 
    If:
\begin{align*}
k \geq \min \bigg\{& \frac{8 L}{\mu} \Big( \frac{D}{\delta} \Big)^2 \log \Big(  \frac{f(\vx_0) - f(\vx^*)}{\epsilon} \Big) , \\
 & K_0 + H + 2\sqrt{\frac{2L}{\mu}} \log \left( \frac{(L-\mu) {r_0}^2 }{2\epsilon}\right)\bigg\}, 
\end{align*}
where $H %= \frac{2}{\theta}\log(1/(2\theta^2) - 1)
= 2\sqrt{\nicefrac{2L}{\mu}}\log(L/\mu -1 )$ and $K_0 = \frac{8 L}{\mu} \left( \frac{D}{\delta} \right)^2 \log \left(
  \frac{2(f(\vx_0) - f(\vx^*))}{\mu {r_0}^2} \right),$ then:
$$
f(\vx_k) - f(\vx^*) \leq \epsilon.
$$
\end{restatable}
\begin{proof}[Proof Sketch]

The statement of the theorem is a direct consequence of the following observations about Algorithm~\ref{algo:acc-FW-rel-int}. First, observe that the AFW algorithm is run independently of the accelerated sequence, and, in particular, the accelerated sequence has no effect on the AFW-sequence whatsoever. Further, in any iteration, the set $\cc_k$ that we project onto is the convex hull of some active set $\cs_i^{\mathrm{AFW}} \subseteq \cx$ for some $0\leq i \leq k-1$ implying $\vxh_k \in \cx$ -- each $\vxh_k$ is hence feasible. 

Now, as in any iteration $k$ the solution outputted by the algorithm is $\vx_k = \argmin\{f(\vx_k^{\mathrm{AFW}}), \, f(\vxh_k)\},$ the algorithm never makes less progress than AFW. This immediately implies (by a standard AFW  guarantee; see~\cite{lacoste2015global} and Proposition~\ref{proposition:BurnIn-Phase}) that for $k \geq  \frac{8 L}{\mu} \left( \frac{D}{\delta} \right)^2 \log \big(  \frac{f(\vx_0) - f(\vx^*)}{\epsilon} \big)$, it must be that $f(\vx_k) - f(\vx^*) \leq \epsilon$, which establishes the unaccelerated part of the minimum in the asserted rate. 

Further, there exists an iteration $K \leq K_0$ such that for all $k \geq K$ it holds $\vx^* \in \co(\cs_k^{\mathrm{AFW}})$ (see Proposition~\ref{proposition:BurnIn-Phase}). Let $K$ be the first such iteration. Then, the AFW  algorithm must have added a vertex in iteration $K$ as otherwise $\vx^* \in \co(\cs_{k-1}^{\mathrm{AFW}})$, contradicting the minimality of $K$. %Thus, if a restart happens at some iteration $k \geq K,$ we have that $\vx^* \in \cc_k$. 
Due to the restarting criterion from Algorithm~\ref{algo:acc-FW-rel-int}, a restart must happen by iteration $K_0 + H.$ %where 
%$$
%H = \frac{2}{\theta}\log(1/\theta^2 - 1)= 2\sqrt{\frac{L}{\mu}}\log(L/\mu -1 ) \leq 2 \sqrt{\frac{L}{\mu}} \log \left( \frac{(L-\mu) r^2 }{2\epsilon}\right).
%$$ 
Thus, for $k \geq K_0 + H$, it must be $\vx^* \in \cc_k$.

The rest of the proof invokes Lemma~\ref{lemma:modified-agdp} and its corollary (Corollary~\ref{cor:restarts}), which ensures accelerated convergence following iteration $K_0 + H.$
%The restarting criterion implies that we perform at least $H = \frac{2}{\theta}\log(1/\theta^2 - 1)$ iterations between successive restarts of the accelerated sequence $\{\vxh_k\}$ and, unless a restart happens, we also have that $\cc_k \subseteq \cc_{k-1}$. Thus, starting from iteration $K_0 + H$, Lemma~\ref{lemma:modified-agdp} and Corollary~\ref{cor:restarts} apply and the accelerated sequence $\{\vxh_k\}$ convergences to $\vx^*$, and so does $\{\vx_k\}$ as we take the better of AFW-sequence and the accelerated sequence at any restart. The remaining $2 \sqrt{\frac{L}{\mu}} \log \left( \frac{(L-\mu) r^2 }{2\epsilon}\right)$ part of the minimum in the asserted rate follows now by Corollary~\ref{cor:restarts}. 
\end{proof}

\begin{remark}[Inexact projection oracles.]
For simplicity, we stated Theorem~\ref{thm:main} assuming exact minimization oracle ($\epsilon_i^m = 0$ in Lemma~\ref{lemma:modified-agdp}). Clearly, it suffices to have $\epsilon_i^m = \frac{a_i}{8}\epsilon$ and invoke Theorem~\ref{thm:main} for target accuracy $\epsilon/2.$
\end{remark}

\begin{remark}[Further improvements to the practical performance.] 
If in any iteration the Wolfe gap of the accelerated sequence on $\cc_k$, $\max_{\vu \in \cc_k}\innp{\nabla f(\vx_k), \vx_k - \vu},$ is smaller than the target accuracy of the projection subproblem (order-$\frac{\epsilon}{\sqrt{\mu L}}$), then $f$ cannot be reduced by more than order-$\frac{\epsilon}{\sqrt{\mu L}}$ on $\cc_k,$ and one can safely perform an early restart without affecting the theoretical convergence guarantee. 
\end{remark}
  
\begin{remark}[Running Algorithm~\ref{algo:acc-FW-rel-int} when $\vx^* \in \interior(\cx)$]
  Usually we do not know ahead of time whether
  $\vx^* \in \interior(\cx)$ or whether $\vx^*$ is in the relative
  interior of a face of $\cx$. However, we can simply run
  Algorithm~\ref{algo:acc-FW-rel-int} agnostically, as in the case where
  $\vx^* \in \interior(\cx)$ we still exhibit local acceleration with
  an argumentation and convergence analysis analogous to the one in
  Section~\ref{sec:warm-optim-inter}. In particular, the assumptions
  of Section~\ref{sec:fullResult} are only needed to establish a bound
  for the estimation in Proposition~\ref{proposition:BurnIn-Phase}.
\end{remark}

\begin{remark}[Variant relying exclusively on a linear optimization oracle]
Similar as in the Conditional Gradient Sliding (CGS) algorithm \cite{lan2016conditional} we can also solve the projection problems using (variants of) CG. The resulting algorithm is then fully projection-free similar to CGS. % in the sense that it is also eshews \emph{internal} projections on to small dimensional simplices. % only accessing the feasible region by means of the linear optimization oracle.
In fact, a variant of CGS is recovered if we  ignore the AFW steps and only run the accelerated sequence with such projections realized by CG. 
\end{remark}

%\begin{remark}[Noise-robust acceleration]
%It is possible to make Algorithm~\ref{algo:acc-FW-rel-int} robust to gradient noise by combining Lemma~\ref{lemma:modified-agdp} with the noise robust acceleration analysis in \cite{cohen2018acceleration}. \todo{SP -> JD: please fix and make nice or remove etc. This is just a placeholder...}
%\end{remark}

\section{Computational Results}
\label{sec:comp}

The theoretical results from the previous section showed that the LaCG framework leads to the asymptotically optimal local convergence on the class of smooth strongly convex optimization problems over polytopes. However, this acceleration comes at the cost of more computationally-intensive iterations, as each iteration performs projections onto the convex hull of the active set. 
In this section, we show that the improvement in the convergence rate is sufficiently high to offset the increased cost of iterations. In most settings, LaCG outperforms other CG variants not only in terms of the  per-iteration progress, but also in wall-clock time. 

%As shown in Theorem~\ref{thm:main} the convergence rate of the LaCG algorithm is asymptotically optimal. We will now demonstrate the expected speed-up in convergence in terms of iteration empirically. Furthermore although each iteration of Algorithm~\ref{algo:acc-FW-rel-int} is more expensive than that of the AFW or PFW algorithm, the numerical experiments performed show that the convergence in terms of time of LaCG outperforms or is comparable to that of other projection free-methods. We illustrate the performance of our algorithm with numerical experiments, in order to compare the running time and the progress per iteration of our algorithm with that of related projection-free methods. 

We implemented Algorithm~\ref{algo:acc-FW-rel-int} in Python 3, employing the $\mathcal{O}\left( n\log n \right)$ projections onto the simplex described in \cite[Algorithm 1]{duchi2008efficient} and Nesterov accelerated gradient descent \cite{nesterov2018introductory} to solve the projection subproblems. 
As mentioned before, LaCG can be used with any CG variant that maintains an active set; %While we provide convergence guarantees for the AFW variant, as mentioned before, other linearly converging CG variants can be used, as long as they maintain an active set; 
this excludes e.g., \emph{Decomposition invariant CG (DiCG)} variants (e.g., \cite{LDLCC2016,bashiri2017decomposition}). Additional information can be found in Appendix~\ref{appx:compresults}. 

Results for each example are shown in a figure with four plots. The first two plots compare  standard LaCG variants to AFW, PFW, and DiCG (if applicable) in iterations (log-log scale) and wall-clock time (log scale). The second two plots compare the standard and lazified\footnote{See the end of the section for details; lazified algorithms are denoted by (L) in the plots.} variants of LaCG to CGS and Catalyst in iterations (log-log scale) and wall-clock time (log scale). As shown in Figs.~\ref{fig:Birkhoffall-examples}-\ref{fig:simplex}, LaCG can make up for more expensive iterations thanks to its higher convergence rate following the burn-in phase, and it outperforms other methods in many important examples. 

There are also examples in which LaCG is outperformed by other methods; in particular, by DiCG (see Appendix~\ref{appx:compresults}). This happens when the linear optimization oracle is very efficient, while the active sets can get large. In this case, DiCG outperforms all active set based methods. Note that, however, DiCG is not broadly applicable; see, e.g., the discussion for the structured regression example below. %: the algorithm in~\cite{LDLCC2016} only applies to 0/1 polytopes in standard form (expressed by a system of inequalities and equalities), and the algorithm in \cite{bashiri2017decomposition} to general polytopes in standard form; this excludes, e.g., the convex hull of a set of points, as shown in the structured regression example below.
%; see, e.g., the structured regression example below. 
Further, in the examples where the cost is not dominated by the cost of maintaining the active set (see Fig.~\ref{fig:Birkhoffall-examples} and Fig.~\ref{fig:simplex}), LaCG outperforms DiCG. 

\paragraph{Birkhoff polytope.}

The first example, shown in  Fig.~\ref{fig:Birkhoffall-examples}\subref{fig:birkhoff1-it-cnt}-\ref{fig:Birkhoffall-examples}\subref{fig:birkhoff2-time}, corresponds to minimization over the Birkhoff polytope in $\mathbb{R}^{n\times n}$ where $n^2=1600$  with $f(\vx) = \vx^T\frac{M^TM+I}{2}\vx$, $M\in \mathbb{R}^{n\times n}$ is a sparse matrix whose $1\%$ of the elements are drawn from a standard Gaussian distribution, and $I$ is the identity matrix (the matrix $M^TM$ has $15\%$ non-zero elements). The resulting condition number is $L/\mu \approx 100.$ The linear optimization oracle in this instance uses the Hungarian algorithm, which has complexity  $\mathcal{O}(n^{3/2})$.

\paragraph{Structured Regression.}

The second example, shown in Fig.~\ref{fig:MIPLIBall-examples}\subref{fig:MIPLIB1-it-cnt}-\ref{fig:MIPLIBall-examples}\subref{fig:MIPLIB2-time}, corresponds to a structured regression problem over the convex hull of the feasible region defined by integer and linear constraints, where the linear optimization oracle corresponds to solving a mixed integer program (MIP) with a linear objective function. The specific instance used corresponds to \texttt{ran14x18-disj-8}, of dimension $504$, from the MIPLIB library. The objective function $f(\vx) = \vx^T\frac{M}{2}\vx + \vb$ was obtained by first generating an orthonormal basis $\mathcal{B} = \{ \vu_1, \cdots, \vu_n \}$ in $\rr^n$ and a set of $n$ uniformly distributed values $\{ \lambda_1, \cdots, \lambda_n \}$ between $\mu$ and $L$ and setting $M = \sum_{i=1}^n \lambda_i \vu_i \vu_i^T$. The vector $\vb$ was set to be outside of the feasible region, and has random entries uniformly distributed from $0$ to $1$. Note that DiCG%the decomposition invariant CG algorithms in 
~\cite{LDLCC2016,bashiri2017decomposition} is not applicable here, as the representation of the MIPLIP polytope mixes continuous and integer variables, so that the away step oracle cannot be readily implemented. In fact, for polytopes over which linear optimization is NP-hard (e.g., TSP polytope), efficiently computing away steps with an away step oracle as in~\cite{bashiri2017decomposition} is not possible unless \(\text{NP} = \text{co-NP}\).

\begin{figure*}[ht!]
    \centering
    \vspace{-10pt}
    \hspace{\fill}
    \subfloat[Iteration]{{\includegraphics[width=3.95cm]{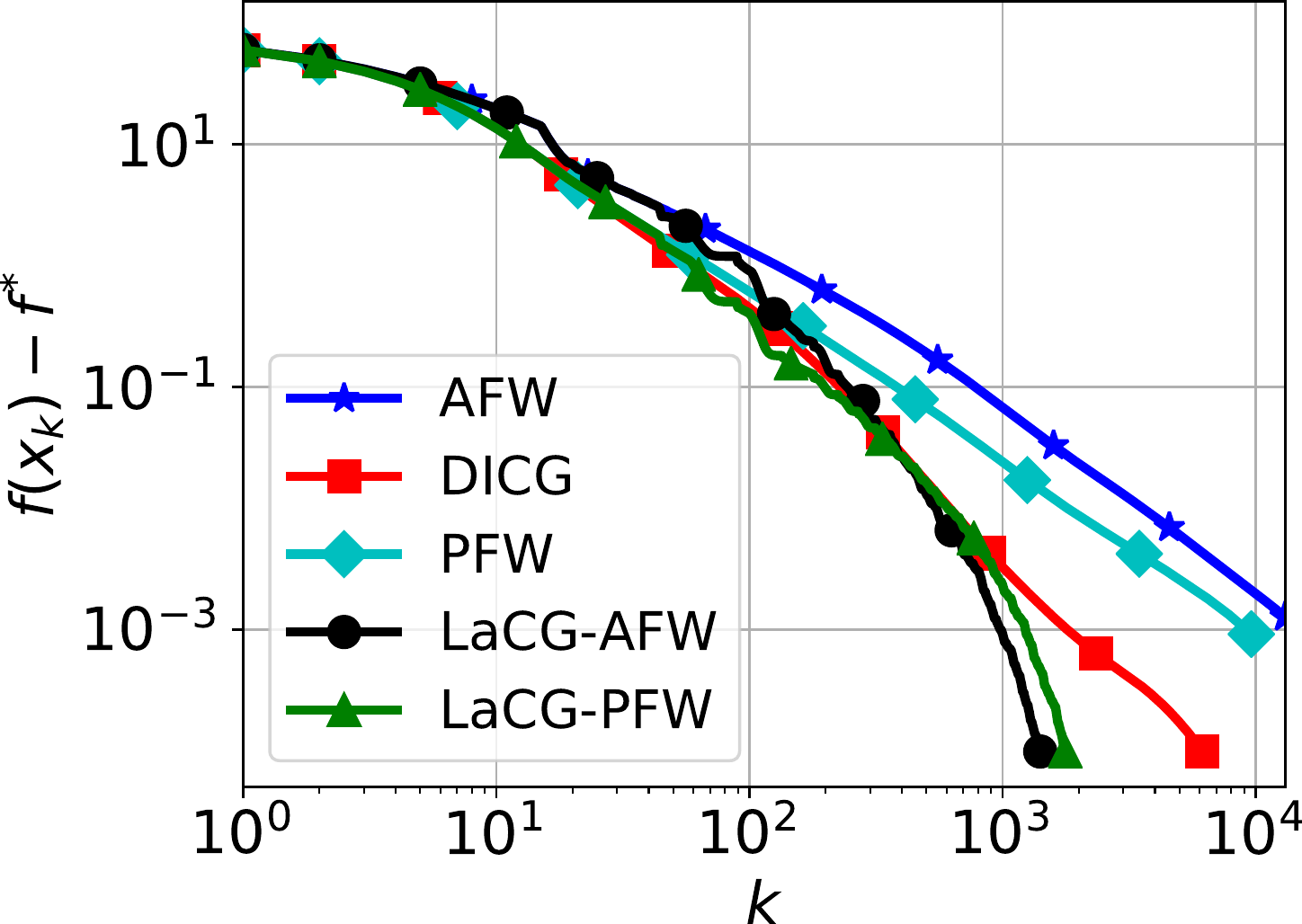} }\label{fig:birkhoff1-it-cnt}}%
    %\qquad
    \hspace{\fill}
    \subfloat[Wall-clock time]{{\includegraphics[width=3.85cm]{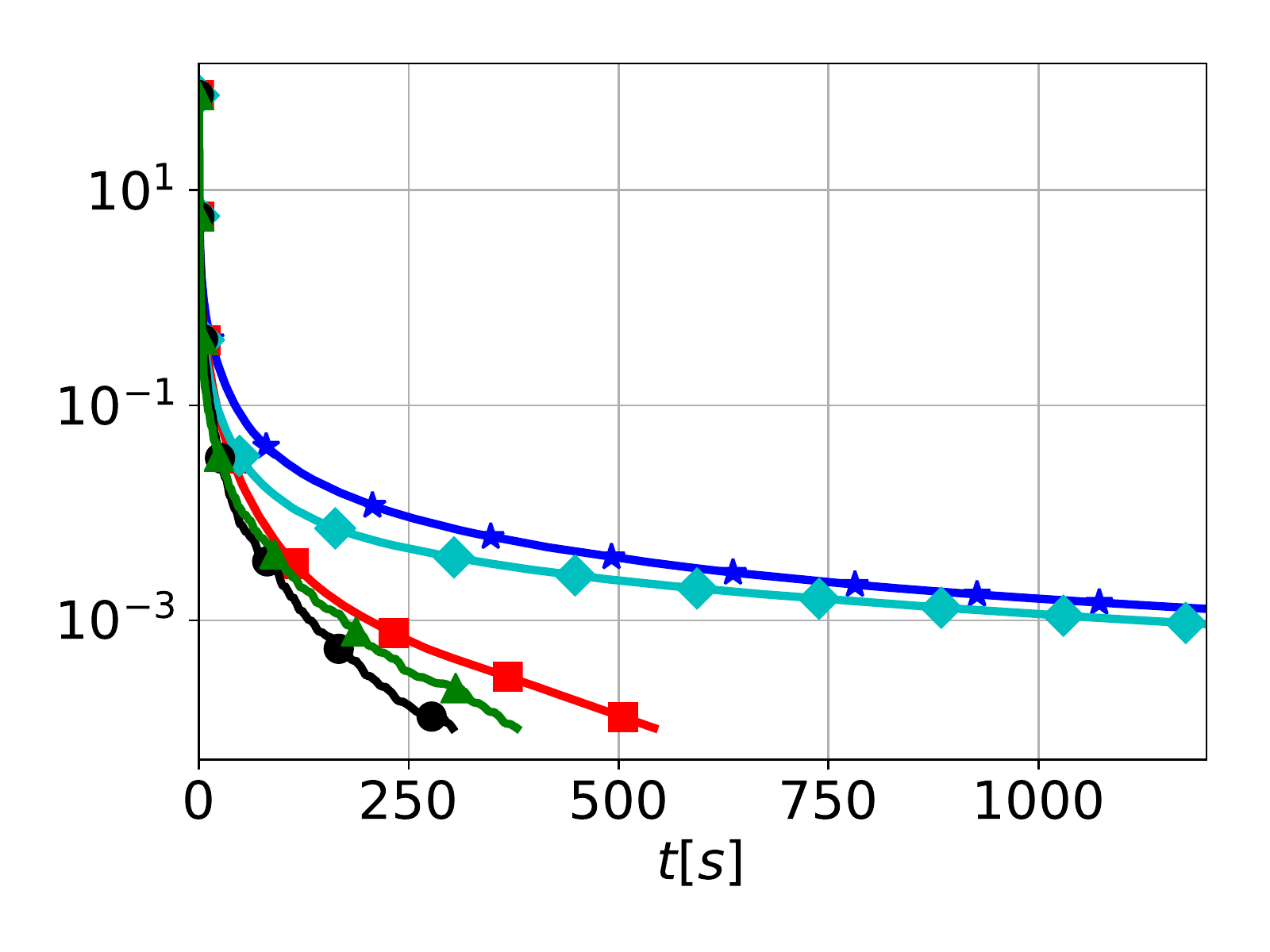} }\label{fig:birkhoff1-time}}%
    \hspace{\fill}
    \subfloat[Iteration]{{\includegraphics[width=3.9cm]{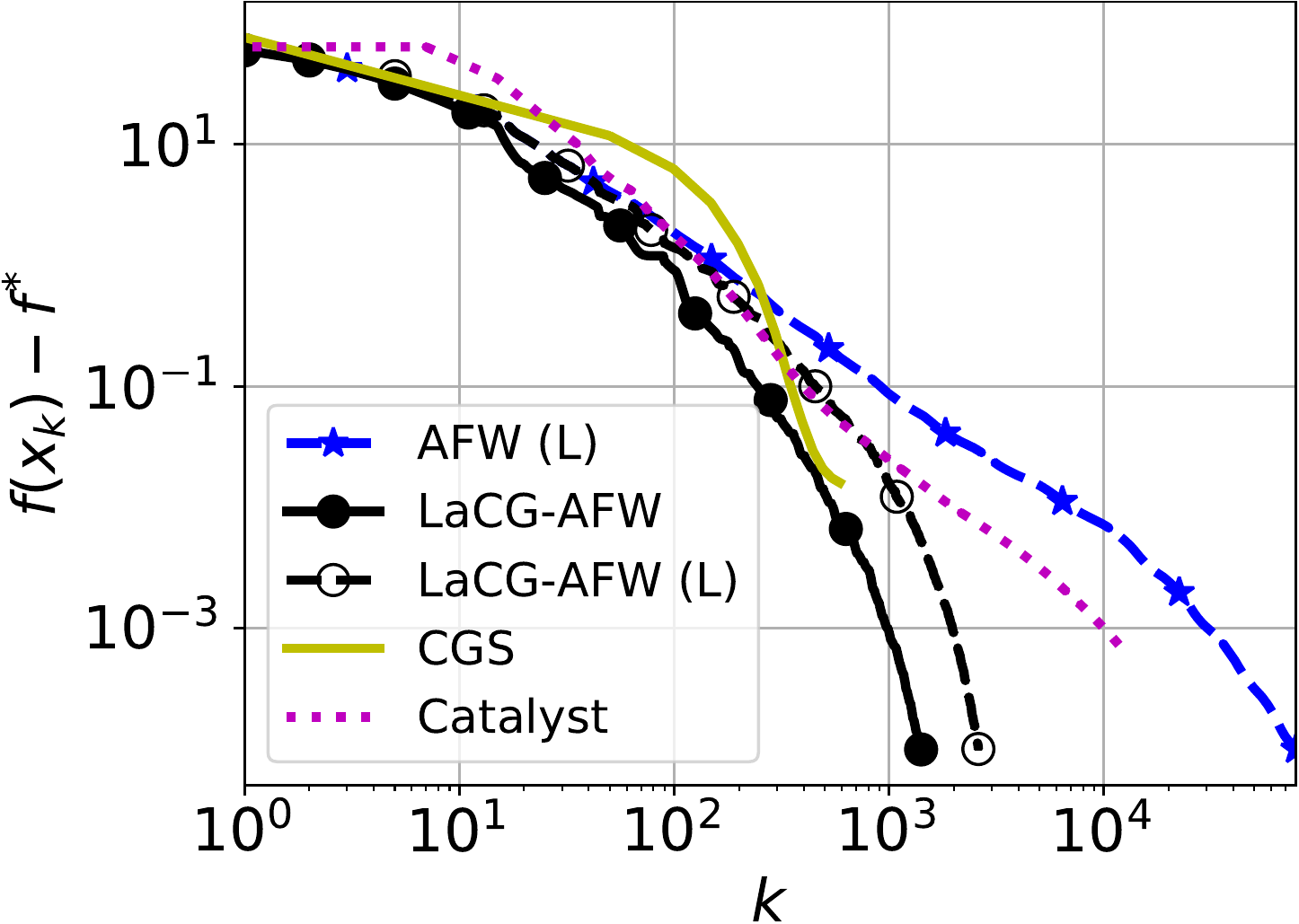} }\label{fig:birkhoff2-it-cnt}}%
    %\qquad
    \hspace{\fill}
    \subfloat[Wall-clock time]{{\includegraphics[width=3.85cm]{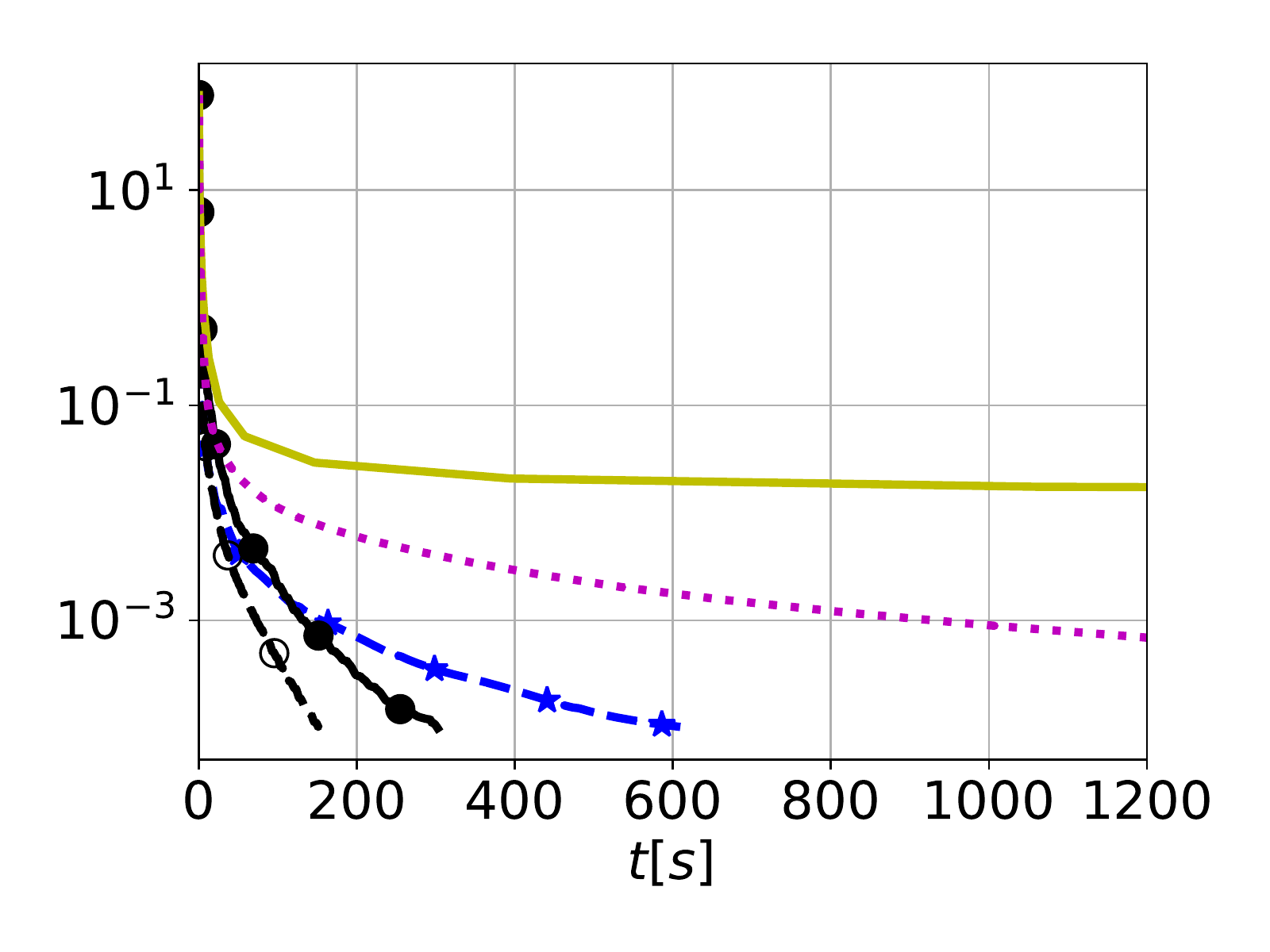} }\label{fig:birkhoff2-time}}%
    \hspace*{\fill}
    \caption{Birkhoff polytope: Algorithm comparison in terms of \protect\subref{fig:birkhoff1-it-cnt},\protect\subref{fig:birkhoff2-it-cnt} iteration count and \protect\subref{fig:birkhoff1-time},\protect\subref{fig:birkhoff2-time} wall-clock time.}%
    \label{fig:Birkhoffall-examples}%
\end{figure*}
\begin{figure*}[h!]
    \centering
    \vspace{-10pt}
    \hspace{\fill}
    \subfloat[Iteration]{{\includegraphics[width=3.95cm]{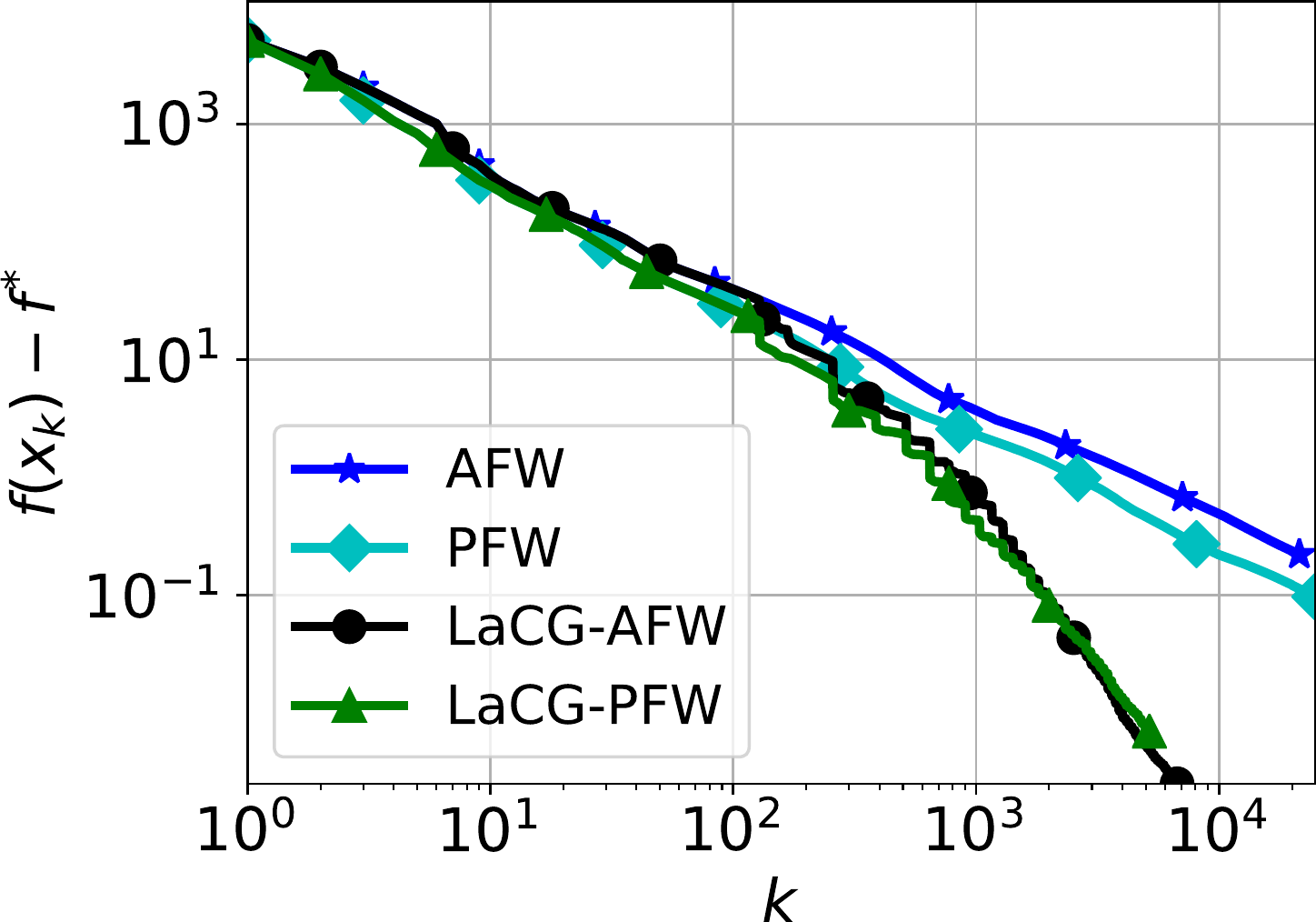} }\label{fig:MIPLIB1-it-cnt}}%
    %\qquad
    \hspace{\fill}
    \subfloat[Wall-clock time]{{\includegraphics[width=3.85cm]{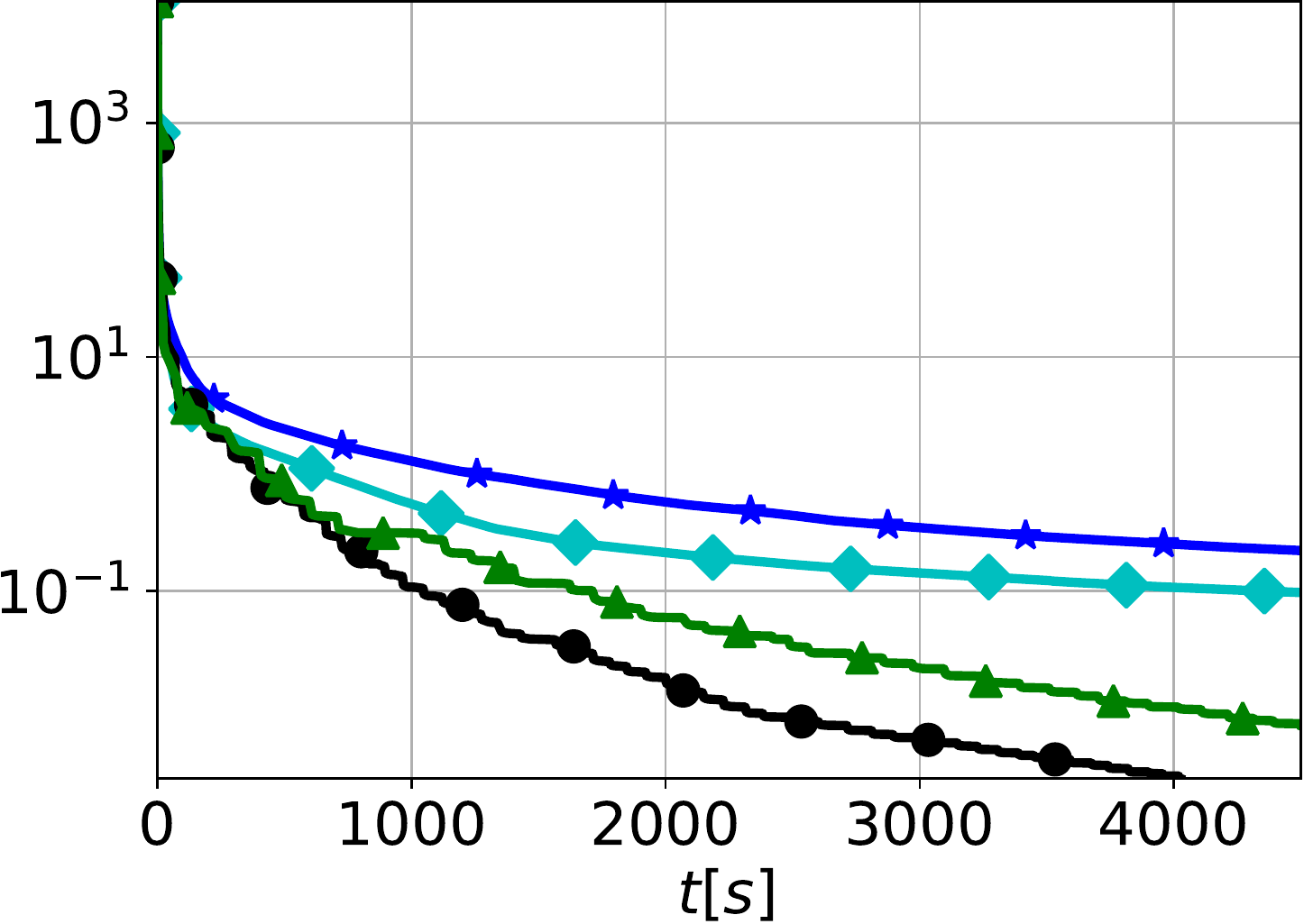} }\label{fig:MIPLIB1-time}}%
    \hspace{\fill}
    \subfloat[Iteration]{{\includegraphics[width=3.9cm]{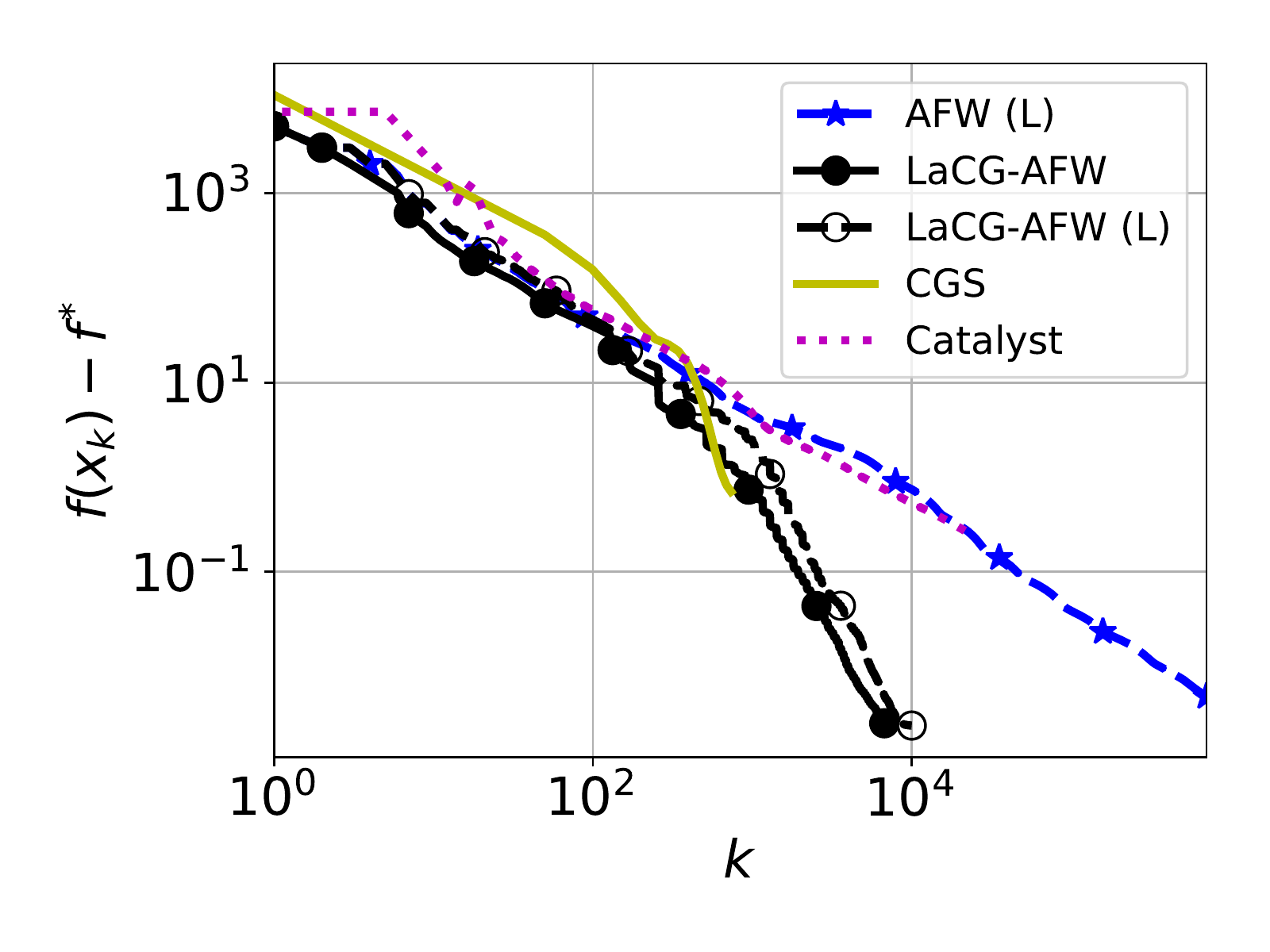} }\label{fig:MIPLIB2-it-cnt}}%
    %\qquad
    \hspace{\fill}
    \subfloat[Wall-clock time]{{\includegraphics[width=3.85cm]{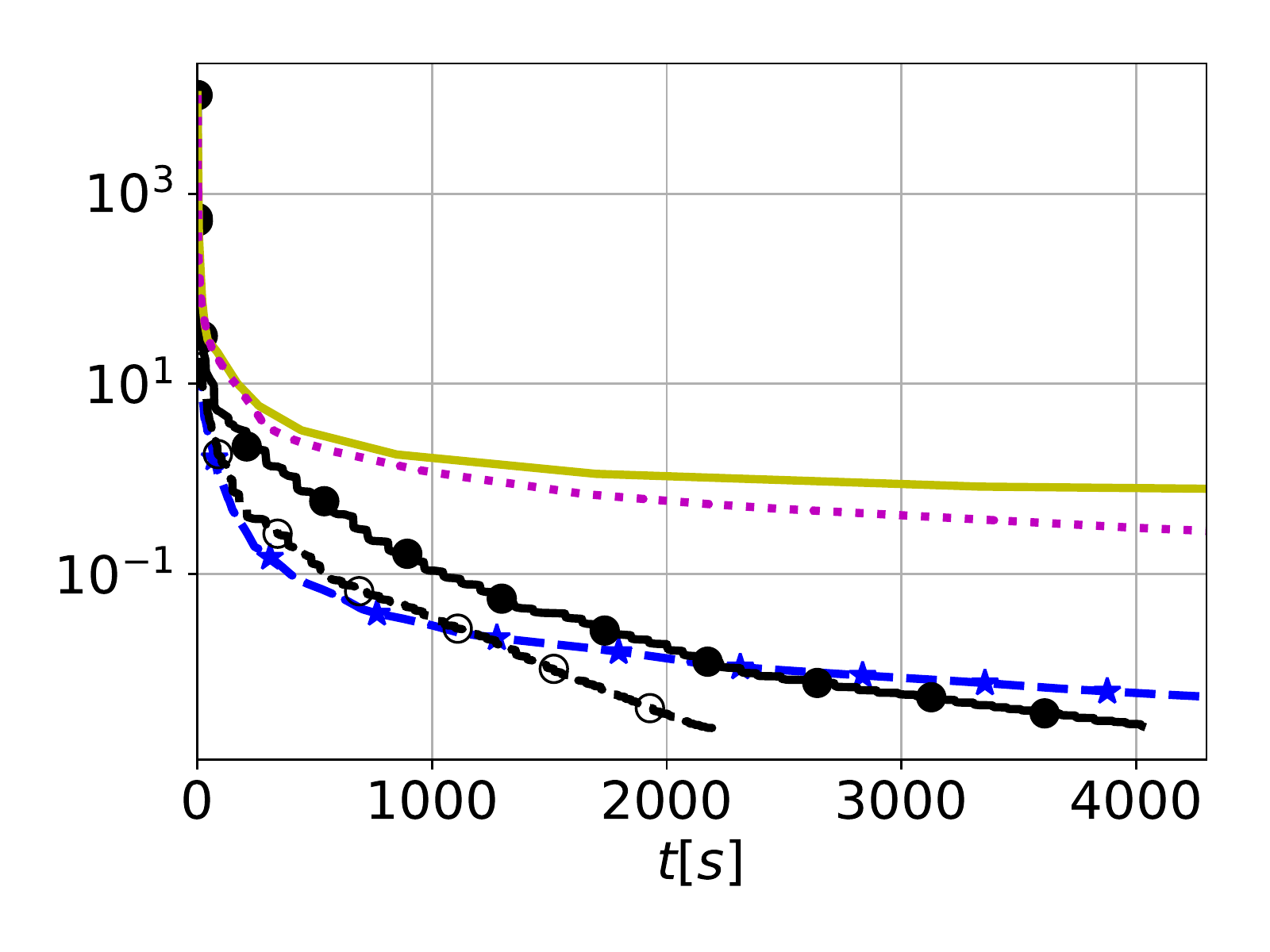} }\label{fig:MIPLIB2-time}}%
    \hspace*{\fill}
    \caption{MIPLIB polytope: Algorithm comparison in terms of \protect\subref{fig:MIPLIB1-it-cnt},\protect\subref{fig:MIPLIB2-it-cnt} iteration count and \protect\subref{fig:MIPLIB1-time},\protect\subref{fig:MIPLIB2-time} wall-clock time for the \texttt{ran14x18-disj-8} polytope from the MIPLIB library.}%
    \label{fig:MIPLIBall-examples}%
\end{figure*}
\begin{figure*}[h!]
    \centering
    \vspace{-10pt}
    \hspace{\fill}
    \subfloat[Iteration]{{\includegraphics[width=3.95cm]{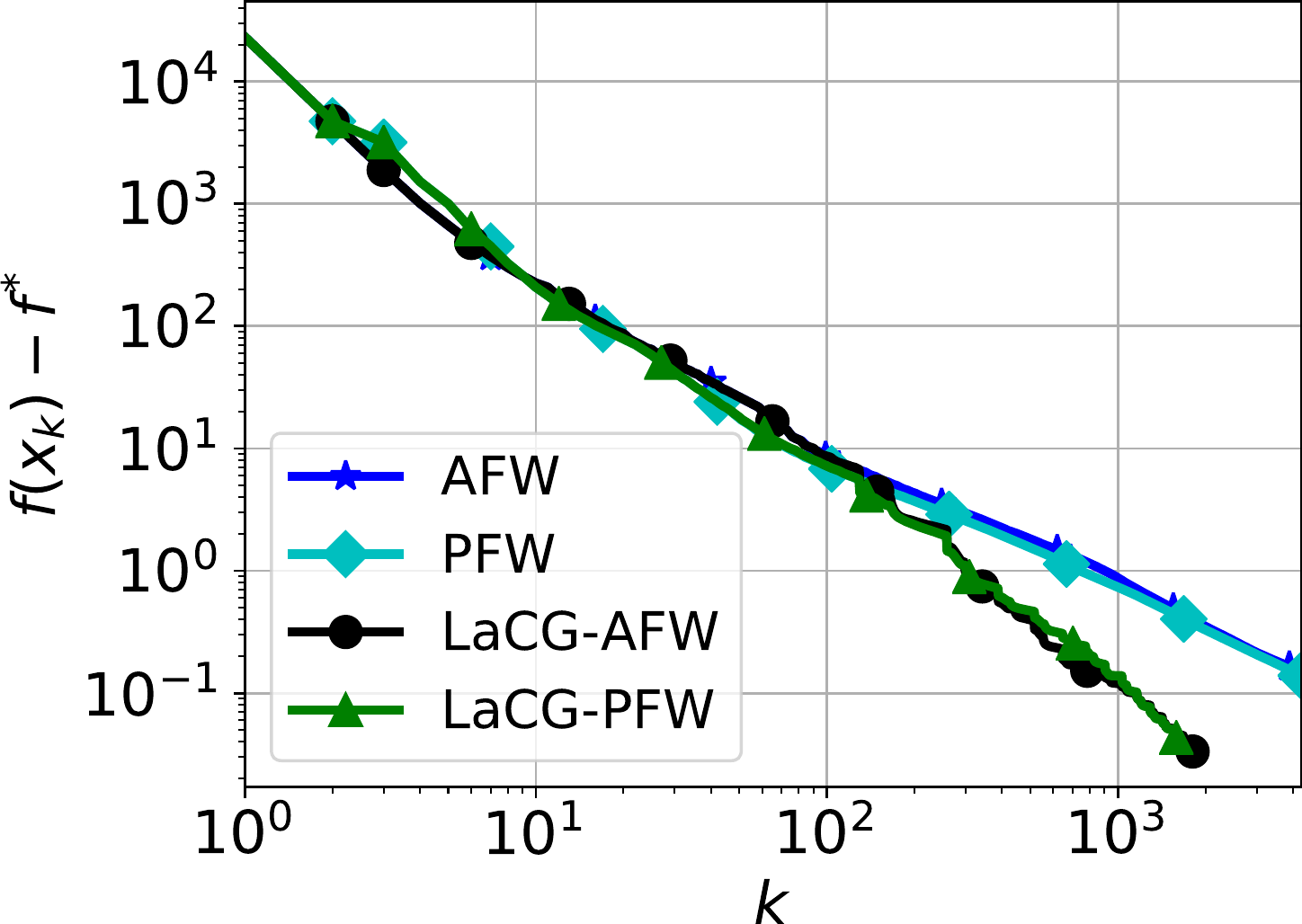} }\label{fig:video1-it-cnt}}%
    %\qquad
    \hspace{\fill}
    \subfloat[Wall-clock time]{{\includegraphics[width=3.85cm]{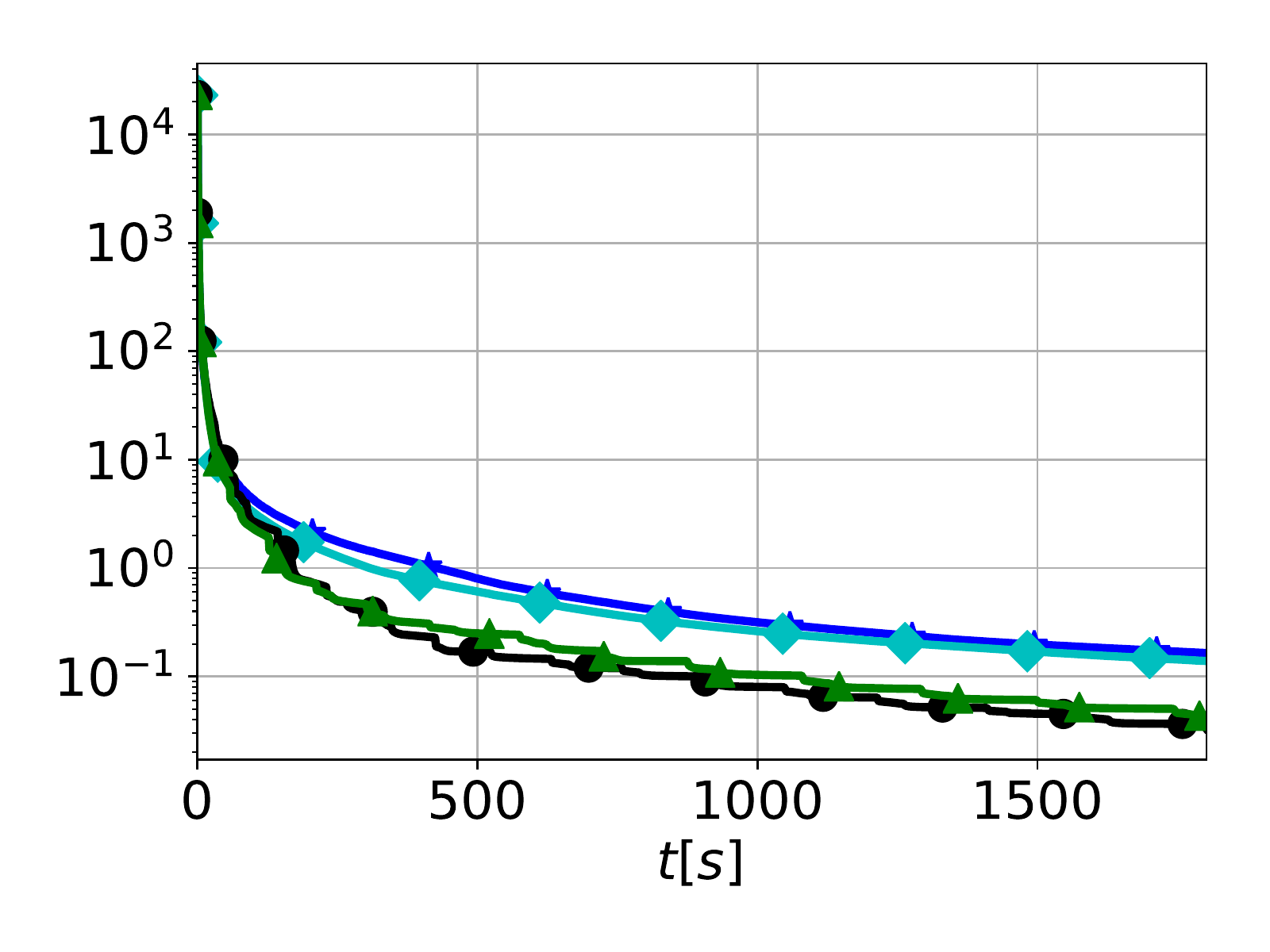} }\label{fig:video1-time}}%
    \hspace{\fill}
    \subfloat[Iteration]{{\includegraphics[width=3.9cm]{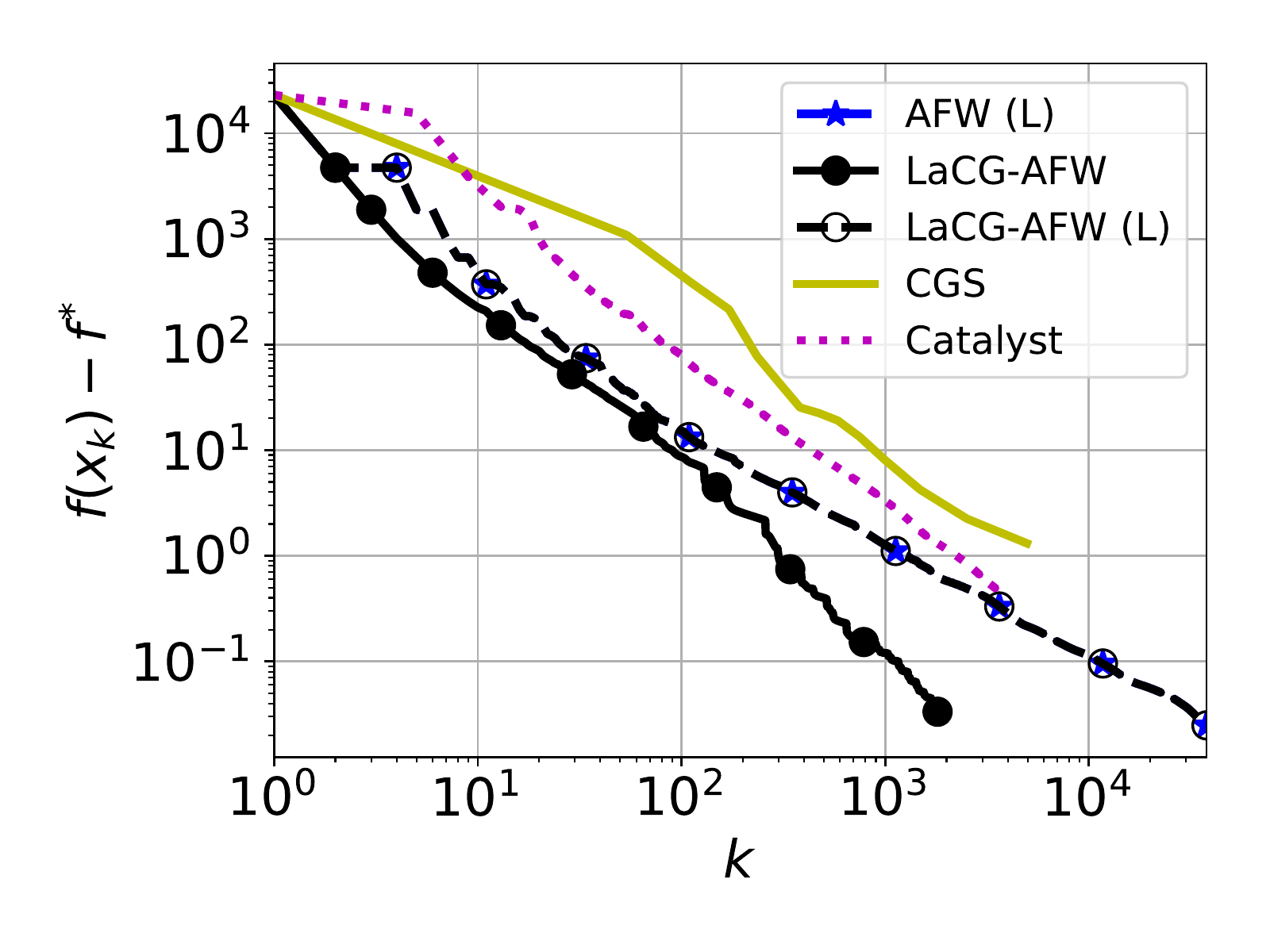} }\label{fig:video2-it-cnt}}%
    %\qquad
    \hspace{\fill}
    \subfloat[Wall-clock time]{{\includegraphics[width=3.85cm]{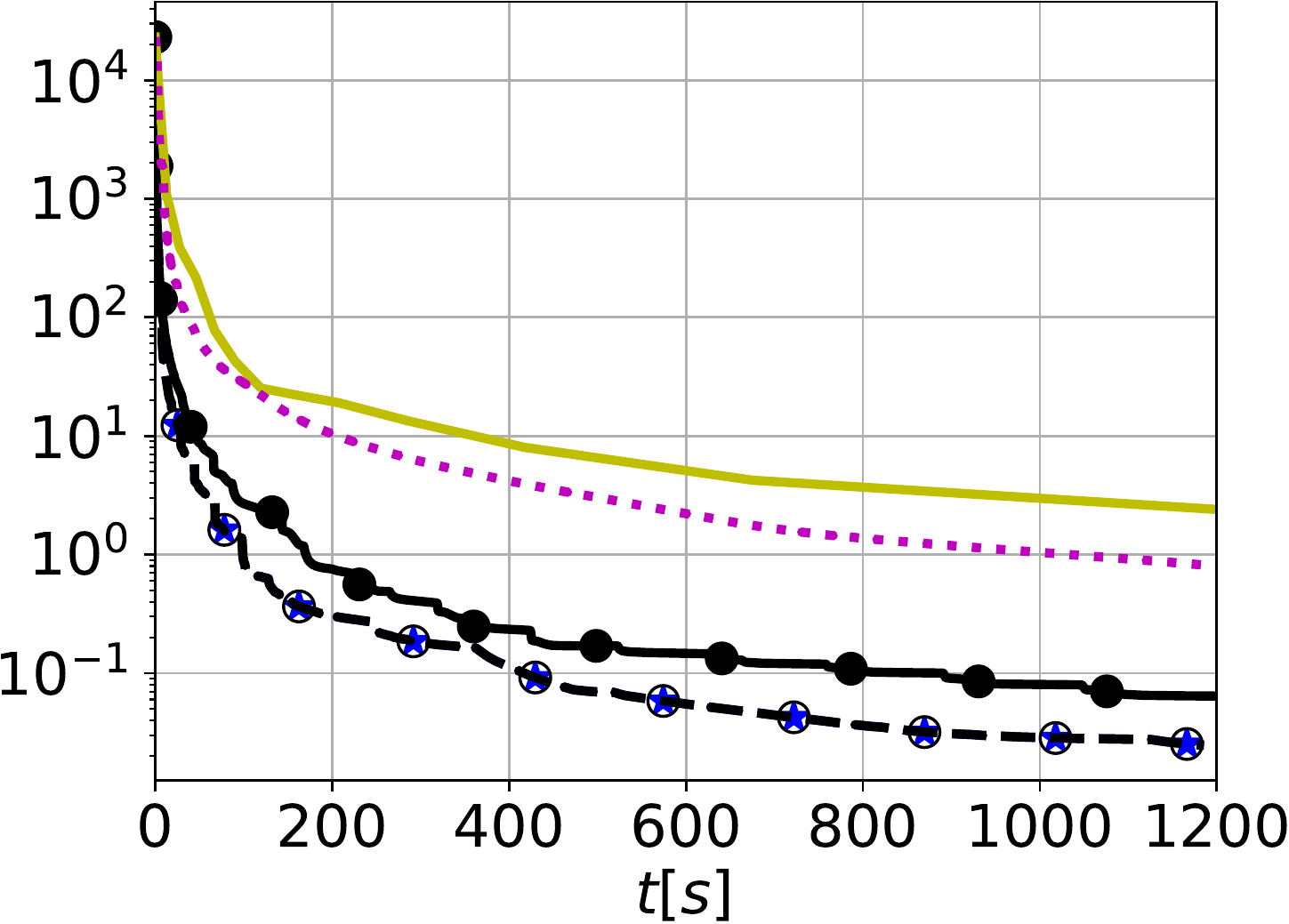} }\label{fig:video2-time}}%
    \hspace*{\fill}
    \caption{Traffic network: Algorithm comparison in terms of \protect\subref{fig:video1-it-cnt},\protect\subref{fig:video2-it-cnt} iteration count and \protect\subref{fig:video1-time},\protect\subref{fig:video2-time} wall-clock time.}%
    \label{fig:video}%
\end{figure*}
\begin{figure*}[h!]
    \centering
    \vspace{-10pt}
    \hspace{\fill}
    \subfloat[Iteration]{{\includegraphics[width=3.95cm]{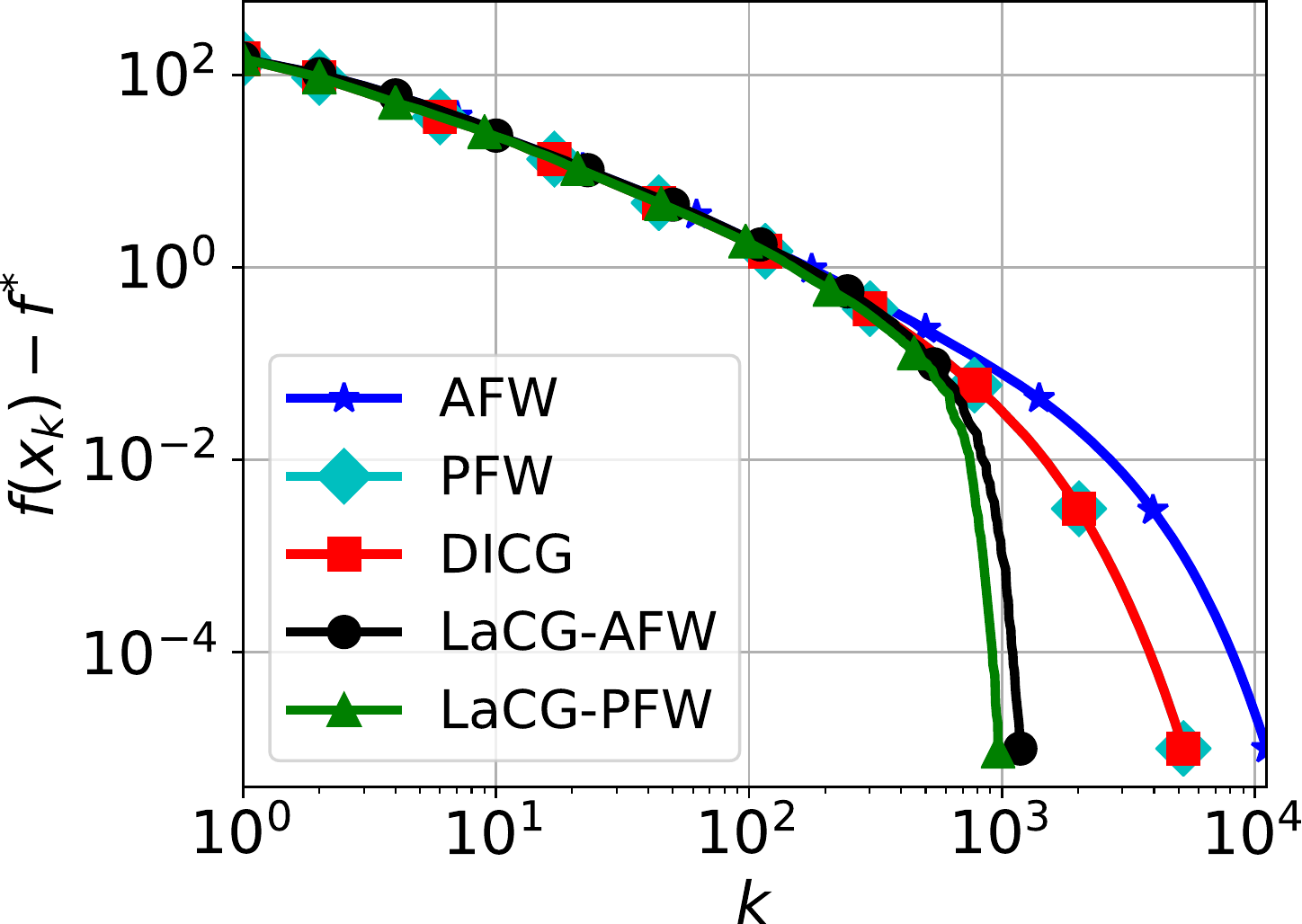} }\label{fig:simplex1-it-cnt}}%
    %\qquad
    \hspace{\fill}
    \subfloat[Wall-clock time]{{\includegraphics[width=3.85cm]{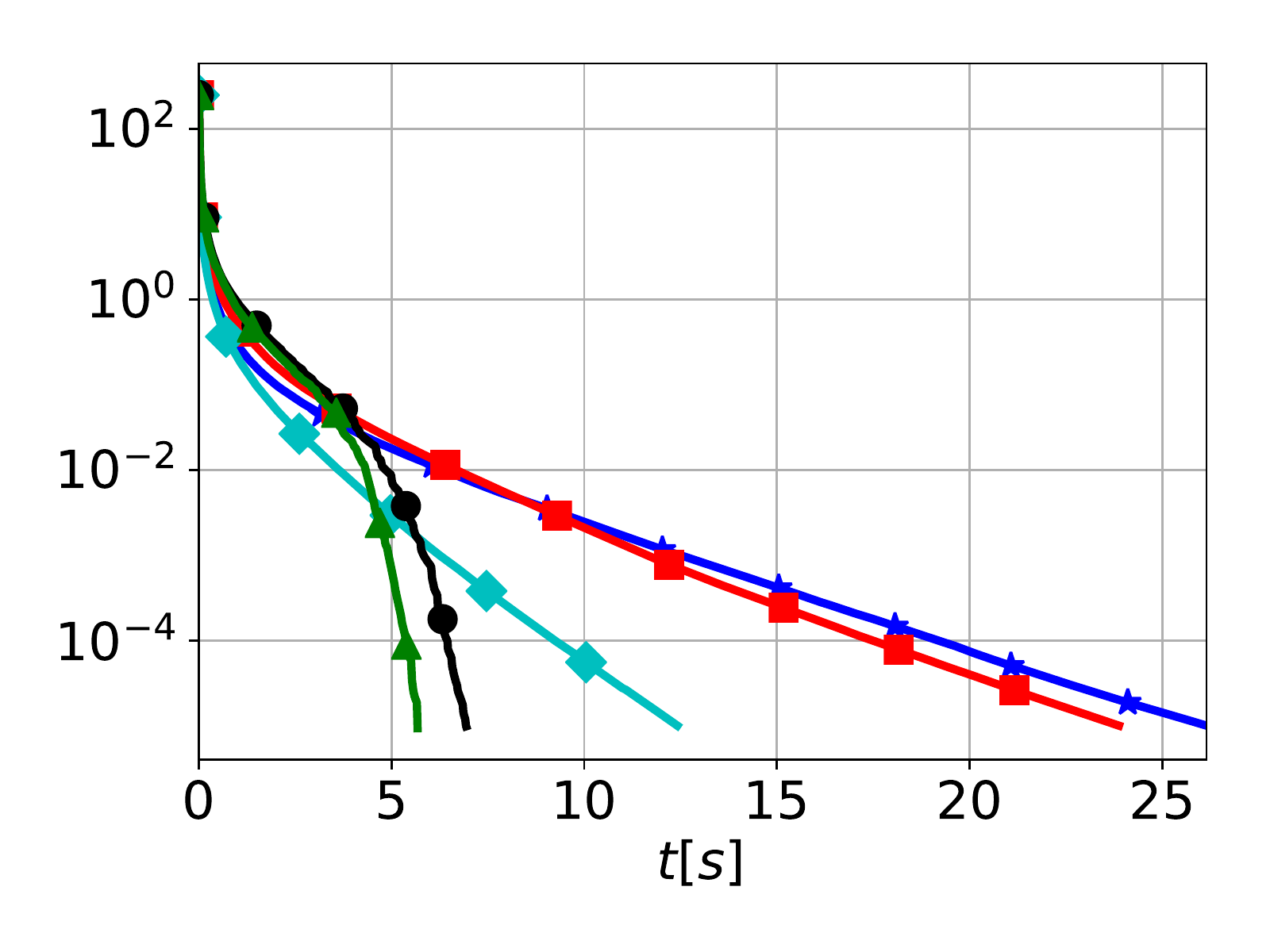} }\label{fig:simplex1-time}}%
    \hspace{\fill}
    \subfloat[Iteration]{{\includegraphics[width=3.9cm]{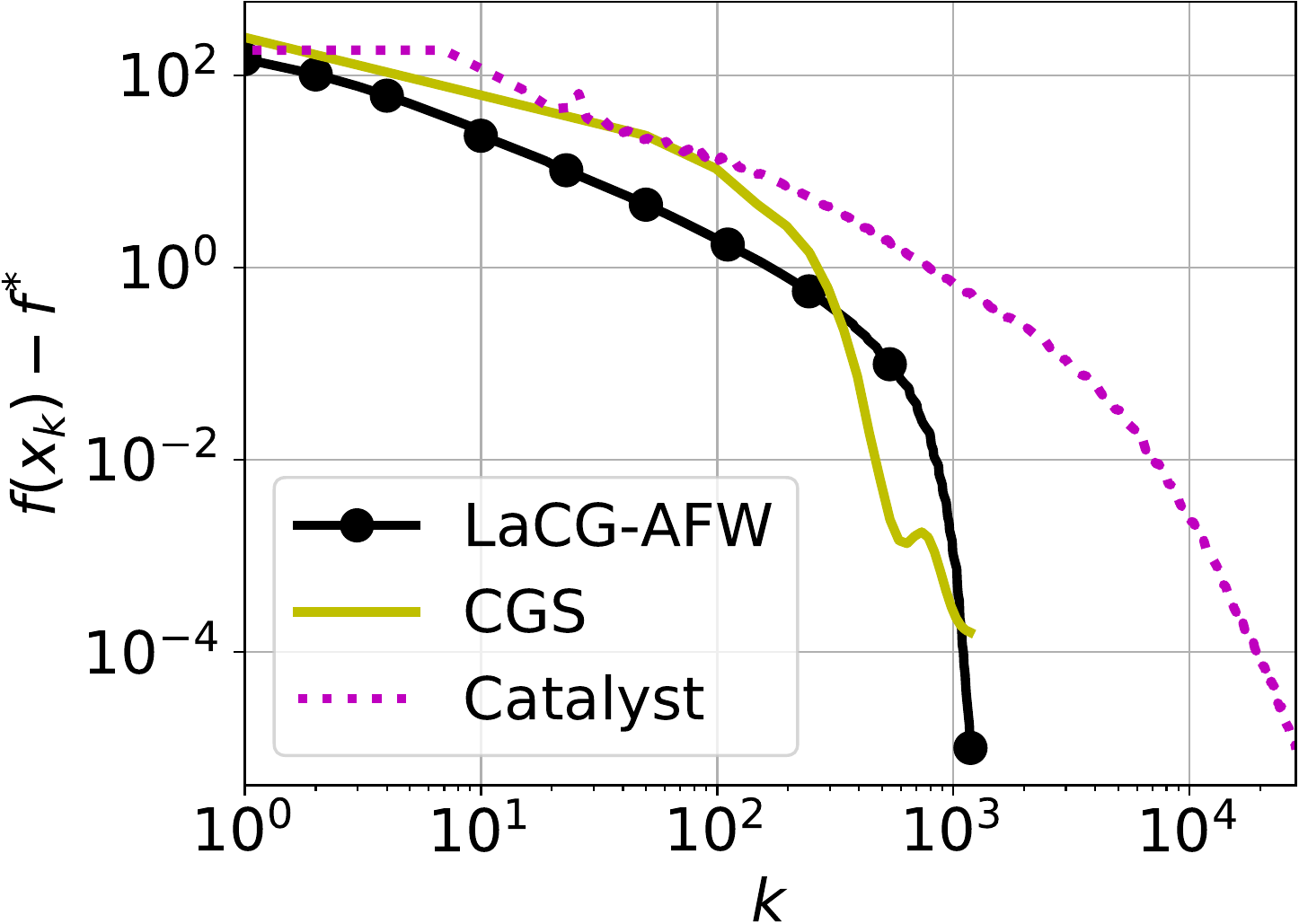} }\label{fig:simplex2-it-cnt}}%
    %\qquad
    \hspace{\fill}
    \subfloat[Wall-clock time]{{\includegraphics[width=3.85cm]{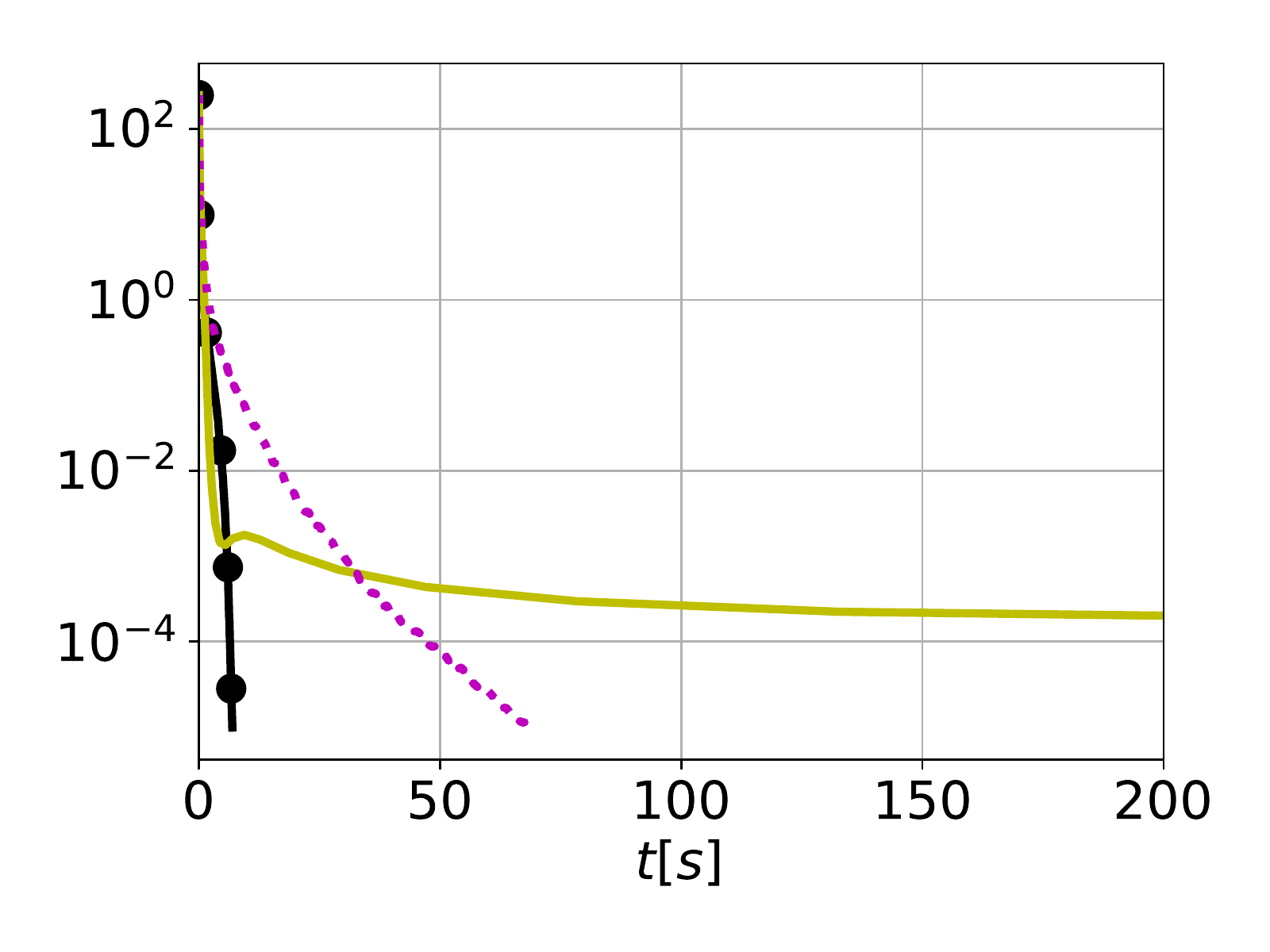} }\label{fig:simplex2-time}}%
    \hspace*{\fill}
    \caption{Simplex: Algorithm comparison in terms of \protect\subref{fig:simplex1-it-cnt},\protect\subref{fig:simplex2-it-cnt} iteration count and \protect\subref{fig:simplex1-time},\protect\subref{fig:simplex2-time} wall-clock time.}%
    \label{fig:simplex}%
\end{figure*}

{\paragraph{Congestion Balancing in Traffic Networks.} We consider the problem of congestion balancing in traffic networks. This problem can be posed as a feasible flow with multiple source-sink pairs and with convex costs on flows over the edges (see, e.g.,~\cite[Chapter 14]{Ahuja:1993:NFT:137406}). We choose the costs to be weighted quadratic, as such costs have the role of balancing the congestion over the network edges (see, e.g.,~\cite{diakonikolas2018width}). The weights are chosen randomly between $\mu$ and $L$, with $L/\mu = 100$, leading to a separable quadratic objective with the condition number 100. The problem instance is the \texttt{road\_paths\_01\_DC\_a} flow polytope (same as in \cite{lan2017conditional} and \cite{braun2018blended}, with dimension $n = 29682$). As the flow polytope in this case is not a 0/1 polytope, DiCG~\cite{LDLCC2016} does not apply to this problem instance.}

\iffalse
\paragraph{Video Co-Localization.}

We also consider the video co-localization problem, which can be shown to be equivalent to minimizing a quadratic objective function over a flow polytope~\cite{joulin2014efficient}. In this problem, the linear optimization oracle corresponds to finding a shortest path in a directed acyclic graph (DAG), for which there are algorithms that solve this problem in running time $\mathcal{O}(E+V),$ where $E$ and $V$ are the number of edges and vertices of the graph, respectively. We solve this problem over a directed acyclic graph with $3180$ edges and $227$ nodes that mimics the structure shown in shown in \cite{joulin2014efficient}, i.e. it has a source node connected to a layer of $15$ nodes, and each layer is fully connected with directed edges to the next layer, to make a total of $15$ layers (of $15$ nodes per layer). The last layer of nodes is connected to a node that acts as a sink. The quadratic was generated in the same way as in the Birkhoff polytope example, i.e. $f(\vx) = \vx^T\frac{M^TM+I}{2}\vx$, with $M$ having $1\%$ non-zero entries drawn from a standard Gaussian distribution. The matrix $M^TM$ has $27\%$ non-zero entries. The condition number in this instance is $L/\mu = 140$.
\fi

\paragraph{Probability Simplex.}
The last example, shown in Fig.~\ref{fig:simplex}\subref{fig:simplex1-it-cnt}-\ref{fig:simplex}\subref{fig:simplex2-time}, is the probability simplex, which, while a toy example, lends itself nicely for comparisons between methods, as expected behavior here is well-understood. Due to the structure of this polytope, there is no need to explicitly maintain an active set in the AFW, PFW or the LaCG algorithm. This greatly speeds up all of the algorithms, and eliminates the main advantage of the DiCG algorithm (i.e., that it does not need to maintain an active set). There are also further algorithmic enhancements that can be made to the LaCG algorithm in this case, which we detail in  Appendix~\ref{appx:simplexEnhancements}. We generate the objective function in the same way as in the Structured Regression, with $n = 1500$, $L/\mu = 1000$ and with $\vb$ having random integer values of $-1, 0,$ or $1$.

\paragraph{Lazification.}

Our proposed approach is also compatible with the lazification technique from \cite{BPZ2017}. To demonstrate the advantage of lazification in terms of wall-clock time, we run it as an alternative version of LaCG when comparing against CGS and Catalyst for each example. As a reference and for comparison, we also include the lazified AFW algorithm.

% IF WE WANT TO STACK IN TRANSPOSED FORM
% \begin{figure}%
%     \centering
%     \subfloat{{\includegraphics[width=6cm]{Simplex_It_2019-05-23-05-16-59_Size1000.pdf} }}%
%     \qquad
%    \subfloat{{\includegraphics[width=6cm]{Birkhoff_It_2019-05-23-04-28-01_Size20.pdf} }}%
%     \\
%     \subfloat{{\includegraphics[width=6cm]{Simplex_t_2019-05-23-05-16-59_Size1000.pdf} }}%
%     \qquad
%     \subfloat{{\includegraphics[width=6cm]{Birkhoff_t_2019-05-23-04-28-01_Size20.pdf} }}%
%     \caption{LaCG comparison over the Birkhoff polytope}%

%    \caption{LaCG comparison over the $\ell_1$-unit ball.}%
%     \label{fig:example}%
% \end{figure}

\section{Discussion}

We presented the Locally-Accelerated Conditional Gradients framework that achieves asymptotically optimal rate in a local region around the minimum and improves upon the existing conditional gradients methods, both in theory and in experiments. As discussed before, such an accelerated rate cannot be achieved globally. %The experiments shown here are for the purpose of illustration; more extensive experiments will be conducted in a future version of the paper. 

Some interesting questions for future research remain. For example, can similar guarantees to ours be obtained for smooth (non-strongly) convex minimization? Further, it would be interesting to understand whether the version of $\mu$AGD+ from Lemma~\ref{lemma:modified-agdp}, which allows changing the projection set $\cc_k$, can speed up the practical performance of accelerated methods in other (possibly projection-based) optimization settings.
%We conjecture that the wall-clock performance of our method can be further improved via more efficient (accelerated) solves of the quadratic minimization on a simplex for the subproblem from Eq.~\eqref{eq:convProject}, which will be considered in a future version of the paper.

\section*{Acknowledgements}
%\label{sec:acknowledgements}

Research reported in this paper was partially supported by the NSF grant CCF-1740855, NSF CAREER Award CMMI-1452463, and Award W911NF-18-1-0223 from the Army Research Office. Part of it was done while JD and SP were visiting Simons Institute for the Theory of Computing.

% \clearpage
\newpage
% \clearpage
\balance
\bibliographystyle{abbrv}
\bibliography{references}

\newpage

\onecolumn

\appendix

\section{Lower Bound in ADGT}\label{appx:adgt}

In this section, we provide the construction of the lower bounds on the minimum function value $f(\vx^*)$ that are used in our analysis. By $\mu$-strong convexity of $f,$ we have that, $\forall \vx \in \cx:$
\begin{equation}\label{eq:strong-convexity-1}
    f(\vx^*) \geq f(\vx) + \innp{\nabla f(\vx), \vx^* - \vx} + \frac{\mu}{2}\|\vx - \vx^*\|^2.
\end{equation}
Further, if $\vx^*$ belongs to the interior of $\cx,$ then $\nabla f(\vx^*) = 0,$ and $L$-smoothness of $f$ implies, $\forall \vx \in \cx:$
\begin{equation}\label{eq:strong-convexity-2}
    f(\vx^*) \geq f(\vx) - \frac{L}{2}\|\vx - \vx^*\|^2.
\end{equation}
Let $\{\vx_i\}_{i=0}^k$ be a sequence of points from some feasible set $\cx$ and let $\{a_i\}_{i=0}^k$ be a sequence of positive numbers with $a_0 = 1$. Define $A_k \defeq \sum_{i=0}^k a_i.$

Assume first that $\vx^*$ belongs to the interior of the feasible set $\cx$. Then, taking a convex combination of Eq.~\eqref{eq:strong-convexity-2} with $\vx = \vx_0$ and Eq.~\eqref{eq:strong-convexity-1} with $\vx = \vx_i$, $1\leq i \leq k,$ we get:
\begin{align*}
    f(\vx^*) \geq &\frac{\sum_{i=0}^k a_i f(\vx_i) + \sum_{i=1}^k a_i(\innp{\nabla f(\vx_i), \vx^* - \vx_i} + \frac{\mu}{2}\|\vx_i - \vx^*\|^2)-\frac{L}{2}\|\vx_0 - \vx^*\|^2}{A_k}\\
    &+ \frac{\mu}{2 A_k}\|\vx_0 - \vx^*\|^2 - \frac{\mu}{2 A_k}\|\vx_0 - \vx^*\|^2\\
    \geq & \frac{\sum_{i=0}^k a_i f(\vx_i) + \min_{\vu \in \rr^d}\{\sum_{i=1}^k a_i(\innp{\nabla f(\vx_i), \vu - \vx_i} + \frac{\mu}{2}\|\vx_i - \vu\|^2) + \frac{\mu}{2}\|\vx_0 - \vu\|^2\}}{A_k}\\
    &- \frac{L+\mu}{2 A_k}\|\vx_0 - \vx^*\|^2.
\end{align*}
The last expression corresponds to the lower bound used in the proof of Lemma~\ref{lemma:acc-conv}.

Now assume that $\vx^*$ is not necessarily from the interior of $\cx.$ Take a convex combination (with weights $a_i/A_k$) of Eq.~\eqref{eq:strong-convexity-1} for $\vx = \vx_i,$ $0 \leq i \leq k.$ Let $\cc_k$ be any convex subset of $\cx$ that contains $\vx^*.$ Then, we have:
\begin{align*}
    f(\vx^*) \geq &\frac{\sum_{i=0}^k a_i f(\vx_i) + \sum_{i=0}^k a_i(\innp{\nabla f(\vx_i), \vx^* - \vx_i} + \frac{\mu}{2}\|\vx_i - \vx^*\|^2)}{A_k}\\
    &+ \frac{\mu_0}{2A_k}\|\vx_0 - \vx^*\|^2 - \frac{\mu_0}{2A_k}\|\vx_0 - \vx^*\|^2\\
    \geq & \frac{\sum_{i=0}^k a_i f(\vx_i) + \min_{\vu \in \cc_k}\{\sum_{i=0}^k a_i(\innp{\nabla f(\vx_i), \vu - \vx_i} + \frac{\mu}{2}\|\vx_i - \vu\|^2) + \frac{\mu_0}{2}\|\vx_0 - \vu\|^2\}}{A_k}\\
    &- \frac{\mu_0}{2A_k}\|\vx_0 - \vx^*\|^2.
\end{align*}
The last expression corresponds to the lower bound used in the proof of Lemma~\ref{lemma:modified-agdp}.

% \section{Strict Complementarity. WIP}
% \label{appx:strictCompl}
% \todo{AC: I don't fully agree with the claim that these case are degenerate.}
% Given a convex function $f$ and a convex polytope $\cx$, they satisfy strict complementarity if given $\vx^* = \argmin_{x \in \cx} f(x)$ and $\vz\in \vertex(\cx)$ we have $\innp{f(\vx^*), \vz - \vx^*} = 0$ if and only if $\vz \in T(x^*)$.

% An example of a convex function $f: \rr^2 \to \rr$  over a convex polytope $\cx$ that does not satisfy strict complementarity is $f(\vx) = \frac{\| \vx \|^2}{2}$ over $\cx = \mathrm{co}(v_1,v_2,v_3,v_4)$,  depicted in the following image.

% \begin{figure}[h]
%     \centering
%     \includegraphics[width=0.3\textwidth]{Degenerate_Case.pdf}
%     \caption{Schematic representation of $\cx$.}
% \end{figure}

%\section{Omitted Proofs from Section~\ref{sec:accg}}
%\lemconvint*
%
%
%%%%%%%%%%%%%%%%%%%%%%%%%%%%%%%%%%%%%%%%%%%%%%%
\section{Omitted Proofs from Section~\ref{sec:accg}}\label{appx:omitted-proofs}
%
%
%%%%%%%%%%%%%%%%%%%%%%%%%%%%%%%%%%%%%%%%%%%%%%%%%
\subsection{Proofs and Results for Warm-up: Optimum in the Interior of $\cx$}\label{appx:prelim-algo}
Starting at point $\vx_k,$ the Frank-Wolfe step $\vx^{\mathrm{FW}}_{k+1}$ is defined via:
\begin{equation}\label{eq:FW-step}
    \begin{gathered}
    \vv_k = \argmin_{\vu \in \cx} \innp{\nabla f(\vx_k), \vu},\\
    \vx^{\mathrm{FW}}_{k+1} = (1-\eta_k)\vx_k + \eta_k \vv_k,
    \end{gathered}
\end{equation}
where 
$$\eta_k = \argmin_{\eta \in [0, 1]} \Big\{f(\vx_k) + \innp{\nabla f(\vx_k), \eta_k (\vv_k - \vx_k)} + \frac{L}{2}{\eta_k}^2\|\vx_k - \vv_k\|^2\Big\}.$$

On the other hand, the accelerated step $\vxh_{k+1}$ is defined as:
\begin{equation}\label{eq:agd-step}
    \begin{gathered}
    \vy_{k+1} = \frac{1}{1+\theta}\vx_{k} + \frac{\theta}{1 + \theta} \vw_k,\\ 
    \vw_{k+1} = (1-\theta)\vw_k + \theta \Big(\vy_{k+1} - \frac{1}{\mu}\nabla f(\vy_{k+1})\Big),\\
    \vxh_{k+1} = (1-\theta)\vx_k + \theta \vw_{k+1},
    \end{gathered}
\end{equation}
where $\theta  = \sqrt{\frac{\mu}{L}}$ and $\vw_k$ and $\vx_k$ are appropriately initialized. 
We now proceed to describe the algorithm.
\begin{algorithm}
\caption{Preliminary Locally Accelerated Frank-Wolfe for $\vx^* \in \interior(\cx)$}\label{algo:acc-FW}
\begin{algorithmic}[1]
\Statex Input: $\vx_0 \in \cx$, $\mu,$ $L,$ $\cx$
\Statex Initialization: $\vw_{0}  = \vx_0,$ $\theta = \sqrt{\mu/L}$
\For{$k = 0$ to $N-1$}
\State Compute $\vx^{\mathrm{FW}}_{k+1}$ based on Eq.~\eqref{eq:FW-step} and $\vxh_{k+1}$ based on Eq.~\eqref{eq:agd-step}
\If{$\vxh_{k+1} \in \cx$}
\State $\vx_{k+1} = \argmin\{f(\vx^{\mathrm{FW}}_{k+1}),\, f(\vxh_{k+1})\}$
\Else
\State $\vx_{k+1} = \vx^{\mathrm{FW}}_{k+1}$
\State $\vw_{k+1} = \vx_{k+1}$
\EndIf
\EndFor
\end{algorithmic}
\end{algorithm}

Note that the ``else'' branch in Algorithm~\ref{algo:acc-FW} effectively restarts the accelerated sequence.

Let us now argue about the convergence of the algorithm. Observe first
that the algorithm makes at least as much progress as Frank-Wolfe,
since, whatever the step is,
$f(\vx_{k+1}) \leq f(\vx^{\mathrm{FW}}_{k+1}).$ We thus have the
following simple proposition, which bounds the length of the so-called
\emph{burn-in phase}.

\begin{proposition}\label{prop:initial-steps}
Assume that $r > 0.$ Then, after at most $K_0 = \lfloor \frac{L D^2}{\mu r^2}\rfloor$ steps of Algorithm~\ref{algo:acc-FW}, $f(\vx_{K_0}) - f(\vx^*) \leq 2\mu r^2.$ Further, in every subsequent iteration $k \geq K_0$, $\|\vx_k - \vx^*\| \leq 2r.$
\end{proposition}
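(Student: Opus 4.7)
The plan is to chain together three standard ingredients: the fact that Algorithm~\ref{algo:acc-FW} dominates plain Frank-Wolfe in per-iteration progress, the classical $O(1/k)$ Frank-Wolfe bound, and $\mu$-strong convexity to translate the primal gap into a distance bound.

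First I would verify \emph{monotonicity}: in either branch of Algorithm~\ref{algo:acc-FW}, the iterate $\vx_{k+1}$ satisfies $f(\vx_{k+1}) \leq f(\vx^{\mathrm{FW}}_{k+1})$ by construction (it is either $\vx^{\mathrm{FW}}_{k+1}$ itself, or the better of $\vx^{\mathrm{FW}}_{k+1}$ and $\vxh_{k+1}$). Since the step size $\eta_k$ in Eq.~\eqref{eq:FW-step} is chosen by minimizing the standard quadratic upper model, $L$-smoothness gives $f(\vx^{\mathrm{FW}}_{k+1}) \leq f(\vx_k)$, hence $f(\vx_{k+1}) \leq f(\vx_k)$ for every $k$.

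Second, I would invoke the textbook Frank-Wolfe sublinear rate. Because $f(\vx_{k+1}) \leq f(\vx^{\mathrm{FW}}_{k+1})$, the usual inductive argument (plugging $\eta = 2/(k+2)$ into the smoothness inequality and solving the recurrence) transfers unchanged to the hybrid iterates and yields
\begin{equation*}
    f(\vx_k) - f(\vx^*) \;\leq\; \frac{2LD^2}{k+2}
\end{equation*}
for all $k \geq 0$. Setting the right-hand side to $2\mu r^2$ gives $k \geq \frac{LD^2}{\mu r^2} - 2$, so $K_0 = \lfloor \frac{LD^2}{\mu r^2} \rfloor$ iterations suffice to drive the primal gap below $2\mu r^2$.

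Finally, $\mu$-strong convexity implies $\frac{\mu}{2}\|\vx_k - \vx^*\|^2 \leq f(\vx_k) - f(\vx^*)$, so $f(\vx_{K_0}) - f(\vx^*) \leq 2\mu r^2$ forces $\|\vx_{K_0} - \vx^*\| \leq 2r$. Monotonicity from the first step then propagates this bound: for every $k \geq K_0$, $f(\vx_k) - f(\vx^*) \leq f(\vx_{K_0}) - f(\vx^*) \leq 2\mu r^2$, and strong convexity again yields $\|\vx_k - \vx^*\| \leq 2r$. There is no real obstacle here — the subtlety, if any, is merely to notice that the accelerated branch never destroys monotonicity (because of the $\argmin$) and never destroys feasibility (because it is only taken when $\vxh_{k+1} \in \cx$), so the classical FW analysis carries over verbatim to the hybrid algorithm.
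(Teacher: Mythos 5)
Your proof is correct and follows essentially the same route as the paper's: dominate plain Frank--Wolfe via the $\argmin$/monotonicity, apply the standard $O(LD^2/k)$ Frank--Wolfe bound to get the primal gap below $2\mu r^2$ after $K_0$ steps, and convert to a distance bound via $\frac{\mu}{2}\|\vx_k-\vx^*\|^2 \leq f(\vx_k)-f(\vx^*)$ (which the paper justifies using $\nabla f(\vx^*)=\zeros$, valid here since $r>0$ places $\vx^*$ in the interior). The only cosmetic difference is the constant in the sublinear rate ($2LD^2/(k+2)$ versus the paper's $2LD^2/(k+4)$), which is immaterial.
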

\begin{proof}
  As in every iteration the algorithm makes at least as much progress
  as standard Frank-Wolfe (since
  $f(\vx_{k+1}) \leq f(\vx^{\mathrm{FW}}_{k+1})$), by standard
  Frank-Wolfe guarantees (see e.g., \cite{jaggi2013revisiting}), we
  have that after $K_0$ steps
  $f(\vx_{K_0}) - f(\vx^*) \leq \frac{2LD^2}{K_0 + 4},$ which gives the first part of the lemma. Since none of the
  iterations of the algorithm can increase the function value, we have
  that in every subsequent iteration
  $f(\vx_k) - f(\vx^*) \leq 2\mu r^2.$ By strong convexity and
  $\nabla f(\vx^*) = 0$, this implies $\|\vx_k - \vx^*\| \leq 2r.$
\end{proof} 
We can conclude that if $r>0,$ for $k > K_0 = \lfloor \frac{L D^2}{\mu r^2}\rfloor$ Algorithm~\ref{algo:acc-FW} never enters the else branch, as ${\cal B}(\vx^*,2r)\cap \mathrm{\Aff}(\cx)\subseteq\cx$. This is precisely what allows us to obtain accelerated convergence in the remaining iterations. This is formally established by the following lemma.

\begin{restatable}{lemma}{lemconvint}%[Convergence analysis of Locally Accelerated Frank-Wolfe with $\vx^* \in \interior(\cx)$]
  \label{lemma:acc-conv}
Assume that $r > 0$ and let $K_0 = \lfloor \frac{L D^2}{\mu r^2}\rfloor$. Then, for all $k \geq K_0:$
$$
f(\vx_k) - f(\vx^*) \leq 2\frac{L+\mu}{\mu} r^2\left(1 - \sqrt{\frac{\mu}{L}}\right)^{k- K_0}. 
$$
\end{restatable}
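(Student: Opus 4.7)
The plan is to combine Proposition~\ref{prop:initial-steps} with an ADGT-based analysis of the accelerated subsequence. By Proposition~\ref{prop:initial-steps}, for every $k \geq K_0$ the algorithm's iterate lies in the safe ball: $\|\vx_k - \vx^*\| \leq 2r$ and $f(\vx_k) - f(\vx^*) \leq 2\mu r^2$. Since $\vx^*$ is $2r$-deep in $\interior(\cx)$, the entire ball $\cb(\vx^*, 2r)$ is contained in $\cx$, so from iteration $K_0$ onward the algorithm operates in a region where unconstrained accelerated descent can, in principle, proceed without leaving $\cx$.

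Next, I would apply the ADGT lower-bound construction from Appendix~\ref{appx:adgt} corresponding to the case $\vx^* \in \interior(\cx)$, reading $\vx_{K_0}$ as the initial point of the accelerated analysis. That construction yields the initial gap $G_{K_0} \leq \frac{L+\mu}{2}\|\vx_{K_0} - \vx^*\|^2 \leq 2(L+\mu) r^2$. The ADGT invariant $A_k G_k \leq A_{K_0} G_{K_0}$, together with the $\mu$AGD+ weight growth $a_k/A_k = \theta$ with $\theta = \sqrt{\mu/L}$ (so that $A_k/A_{K_0} = (1-\theta)^{-(k-K_0)}$), then delivers
\[
f(\vx_k) - f(\vx^*) \;\leq\; G_k \;\leq\; 2\,\frac{L+\mu}{\mu}\, r^2 \,\bigl(1-\sqrt{\mu/L}\bigr)^{k-K_0},
\]
with the $1/\mu$ factor absorbed from the canonical normalization of the ADGT potential (equivalently, from expressing the initial squared distance in $f$-value units via $\mu$-strong convexity).

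The step I expect to be the main obstacle is certifying that the ADGT telescoping can actually be chained uninterrupted from $K_0$ to $k$. Algorithm~\ref{algo:acc-FW} resets the accelerated sequence via $\vw_{k+1} \leftarrow \vx_{k+1}$ in its ``else'' branch whenever $\vxh_{k+1} \notin \cx$, and such a reset breaks the ADGT invariant. My approach is an inductive argument on $k \geq K_0$ using the standard $\mu$AGD+ potential $\Phi_k := A_k\bigl(f(\vxh_k) - f(\vx^*)\bigr) + \frac{L}{2}\|\vw_k - \vx^*\|^2$, which is non-increasing under $\mu$AGD+ updates. Treating the state at $K_0$ as if a fresh restart has just occurred (so $\vw_{K_0} = \vx_{K_0}$), we obtain $\Phi_{K_0} \leq 2(L+\mu)r^2$, which bounds $\|\vw_k - \vx^*\|$ tightly enough that the extrapolation $\vxh_{k+1} = (1-\theta)\vx_k + \theta \vw_{k+1}$ remains inside $\cb(\vx^*, 2r) \subseteq \cx$ (after absorbing a factor $\sqrt{1+\mu/L}$ into the constants), so the ``else'' branch is never triggered past $K_0$. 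Once this feasibility is certified, the accelerated rate follows directly from the ADGT inequality above.
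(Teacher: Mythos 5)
Your plan correctly identifies the real obstacle (whether the ADGT/potential telescoping can be chained without interruption once the iterates are in the safe ball), and your idea of using the $\mu$AGD+ potential to bound $\|\vw_k - \vx^*\|$ and thereby certify that $\vxh_{k+1}$ stays feasible is a sensible way to rule out further restarts. However, the base case of your induction has a genuine gap: you initialize the potential by ``treating the state at $K_0$ as if a fresh restart has just occurred (so $\vw_{K_0} = \vx_{K_0}$),'' but Algorithm~\ref{algo:acc-FW} does not restart at $K_0$. The reset $\vw_{k+1} \leftarrow \vx_{k+1}$ only happens when the ``else'' branch fires, and the last such iteration $k_0$ can be anywhere in $\{0,\dots,K_0\}$ --- possibly $k_0 = 0$ if the accelerated iterates never leave $\cx$. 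At iteration $K_0$ the memory variables $\vw_{K_0}$ and $A_{K_0}$ carry the entire history since $k_0$, so your claimed bound $\Phi_{K_0} \leq 2(L+\mu)r^2$ is unjustified: the potential at the true restart point is $\tfrac{L+\mu}{2}\|\vx_{k_0}-\vx^*\|^2$, which can be as large as order $(L+\mu)D^2$. The paper's proof of Lemma~\ref{lemma:acc-conv} resolves exactly this point: it telescopes the gap $A_k G_k$ from $k_0$ (not $K_0$), bounds $\mu\|\vx_{k_0}-\vx^*\|^2 \leq \frac{4LD^2}{k_0+4}$ via the Frank--Wolfe guarantee at iteration $k_0$, and then shows that the extra contraction accumulated over the interval $[k_0, K_0]$ compensates for the larger initial gap, via the inequality $\bigl(1-\sqrt{\mu/L}\bigr)^{K_0-k_0}\frac{1}{k_0+4} \leq \frac{1}{K_0+4}$. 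This trade-off between ``earlier restart $\Rightarrow$ larger initial distance but more accelerated iterations'' is the missing idea in your argument; without it you cannot anchor the potential at $K_0$ with an $O(r^2)$ value.

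A secondary issue: your derivation yields an initial gap of $2(L+\mu)r^2$, and you then assert the stated constant $2\tfrac{L+\mu}{\mu}r^2$ by ``absorbing'' a factor $1/\mu$ from the normalization. These two quantities differ by a factor of $\mu$ and neither dominates the other for all parameter regimes, so this step needs to be made precise rather than absorbed; the clean route is to carry the factor $\mu\|\vx_{k_0}-\vx^*\|^2$ through the compensation inequality above, as the paper does.
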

\begin{proof}
Let $k_0 \leq K_0$ be the last iteration in which Algorithm~\ref{algo:acc-FW} enters the ``else'' branch -- as already argued, for $k >K_0,$ this cannot happen. Then, from Algorithm~\ref{algo:acc-FW}, we have that $\vw_{k_0} = \vx_{k_0},$ and for all iterations $k \geq k_0 + 1:$
\begin{equation}\label{eq:eff-afw}
    \begin{gathered}
    \vy_{k} = \frac{1}{1+\theta}\vx_{k-1} + \frac{\theta}{1+\theta} \vw_{k-1},\\
    \vw_k = (1-\theta)\vw_{k-1} + \theta \Big(\vy_k - \frac{1}{\mu}\nabla f(\vy_k)\Big),\\
    \vxh_k = (1-\theta)\vx_{k-1} + \theta \vw_k,\\
    \vx_k = \argmin \{f(\vxh_k),\; f(\vx^{\mathrm{FW}}_k) \}.
    \end{gathered}
\end{equation}
To analyze the convergence of~\eqref{eq:eff-afw}, we use the
approximate duality gap technique, as described in Section~\ref{sec:ADGT-description}. Let $a_{k_0} = A_{k_0} = 1$ and
$A_{k} = \sum_{i=k_0}^k a_i,$ $\frac{a_k}{A_k} = \theta$ for
$k \geq k_0 + 1.$ Recall that the approximate duality gap $G_k$ is defined as the
difference between a lower bound on $f(\vx^*),$ $L_k$ and an upper
bound on the algorithm output, $U_k.$ Define $U_k = f(\vx_k)$ and
$L_k$ via (see Appendix~\ref{appx:adgt}):
\begin{equation}
\begin{aligned}
    L_k \defeq &\frac{\sum_{i={k_0}}^k a_i f(\vy_i) + \min_{\vu \in \rr^d}\{\sum_{i={k_0+1}}^k a_i(\innp{\nabla f(\vy_i), \vu - \vy_i} + \frac{\mu}{2}\|\vu - \vy_i\|^2) + \frac{\mu}{2}\|\vu - \vx_{k_0}\|^2\}}{A_k}\\
    &- \frac{L + \mu}{2 A_k}\|\vx^* - \vx_{k_0}\|^2.
\end{aligned}
\end{equation}
%It is not hard to show that $f(\vx^*)\geq L_k,$ by  similar arguments as in~\cite{thegaptechnique}. Further, 
We claim that:
\begin{equation}\label{eq:equiv-def-w}
\begin{aligned}
\vw_k &= \argmin_{\vu \in \rr^d}\Big\{\sum_{i={k_0+1}}^k a_i(\innp{\nabla f(\vy_i), \vu - \vy_i} + \frac{\mu}{2}\|\vu - \vy_i\|^2) + \frac{\mu}{2}\|\vu - \vx_{k_0}\|^2\Big\}\\
&= \frac{\vx_{k_0} + \sum_{i=k_0+1}^k a_i(\vy_i - \frac{1}{\mu}\nabla f(\vy_i))}{A_k}.
\end{aligned}
\end{equation}
Indeed, Eq.~\eqref{eq:equiv-def-w} implies that $\vw_{k_0} = \vx_{k_0},$ while for $k > k_0$ it gives: $A_k\vw_k = A_{k-1}\vw_{k-1} + a_k (\vy_k - \frac{1}{\mu}\nabla f(\vy_k)).$ As $A_k = A_{k-1} + a_k$ and $\frac{a_k}{A_k} = \theta,$ \eqref{eq:equiv-def-w} implies that $\vw_k = (1-\theta) \vw_{k-1} + \theta (\vy_k - \frac{1}{\mu}\nabla f(\vy_k)),$ which is equivalent to the definition from Eq.~\eqref{eq:eff-afw}. 

Further, observe from~\eqref{eq:eff-afw} that $\vx_{k-1} = (1+\theta)\vy_k - \theta \vw_{k-1},$ which, combined with $\vw_k = (1-\theta)\vw_{k-1} + \theta (\vy_k - \frac{1}{\mu}\nabla f(\vy_k))$ and $\theta = \sqrt{\mu/L},$ implies
\begin{equation}\label{eq:grad-progress-y}
    \vxh_k = \vy_k - \frac{1}{L}\nabla f(\vy_k).
\end{equation}

The rest of the proof bounds the initial gap $G_{k_0}$ and shows that for $k > k_0,$ $G_k \leq (1 - \theta)G_{k-1}.$ Note that, by construction, $f(\vx_k) - f(\vx^*) \leq G_k.$

The initial gap equals $G_{k_0} = \frac{L + \mu}{2}\|\vx^* - \vx_{k_0}\|^2$. This follows by simply evaluating $U_{k_0} - L_{k_0}.$  

Now let $k>k_0.$ As $f(\vx_k) \leq f(\vxh_k)$ and using~\eqref{eq:grad-progress-y}:
\begin{align}
    A_k U_k - A_{k-1}U_{k-1} &\leq A_k f(\vxh_k) - A_{k-1}f(\vx_{k-1})\notag\\
    &= a_k f(\vy_k) + A_k(f(\vxh_k) - f(\vy_k)) + A_{k-1}(f(\vy_k) - f(\vx_{k-1}))\notag\\
    &\leq a_k f(\vy_k) - \frac{A_k}{2L}\|\nabla f(\vy_k)\|^2 + A_{k-1}(f(\vy_k) - f(\vx_{k-1})).\label{eq:ub-change}
\end{align}
To bound the change in the lower bound, denote by:
$$
m_k(\vu) = \sum_{i={k_0+1}}^k a_i(\innp{\nabla f(\vy_i), \vu - \vy_i} + \frac{\mu}{2}\|\vu - \vy_i\|^2) + \frac{\mu}{2}\|\vu - \vx_{k_0}\|^2
$$
the function inside the minimum in the definition of $L_k.$ Hence:
$$
m_k(\vw_k) = m_{k-1}(\vw_k) + a_k \innp{\nabla f(\vx_k), \vw_k - \vy_k} + a_k \frac{\mu}{2}\|\vw_k - \vy_k\|^2.
$$
As $\vw_{k-1}$ minimizes $m_{k-1}(\cdot),$ expanding $m_{k-1}(\vw_k)$ around $\vw_{k-1}$, we have:
$$
m_{k-1}(\vw_k) = m_{k-1}(\vw_{k-1}) + \innp{\nabla m_{k-1}(\vw_{k-1}), \vw_k - \vw_{k-1}} + \frac{A_{k-1} \mu }{2}\|\vw_k - \vw_{k-1}\|^2,
$$
leading to:
\begin{align*}
 m_k(\vw_k) - m_{k-1}(\vw_{k-1}) &= a_k \innp{\nabla f(\vy_k), \vw_k - \vy_k} + a_k \frac{\mu}{2}\|\vw_k - \vy_k\|^2 + \frac{A_{k-1}\mu}{2}\|\vw_k - \vw_{k-1}\|^2   \\
 &\geq a_k \innp{\nabla f(\vy_k), \vw_k - \vy_k} + \frac{A_k \mu}{2}\Big\|\vw_k - \frac{A_{k-1}}{A_k}\vw_{k-1} - \frac{a_k}{A_k}\vy_k\Big\|^2,
\end{align*}
where the second line is by Jensen's inequality. As $\frac{a_k}{A_k} = \theta = \sqrt{\mu/L},$ using the definition of $\vw_k,$ we have:
$$
m_k(\vw_k) - m_{k-1}(\vw_{k-1}) \geq a_k \innp{\nabla f(\vy_k), \vw_k - \vy_k} + \frac{A_k}{2L}\|\nabla f(\vy_k)\|^2.
$$
Combining with the definition of $L_k,$ we thus have:
\begin{equation}\label{eq:lb-change}
    A_k L_k - A_{k-1}L_{k-1} \geq a_k f(\vy_k) + a_k \innp{\nabla f(\vy_k), \vw_k - \vy_k} + \frac{A_k}{2L}\|\nabla f(\vy_k)\|^2.
\end{equation}
Combining \eqref{eq:ub-change} and \eqref{eq:lb-change}, we have:
\begin{align*}
    A_k G_k - A_{k-1}G_{k-1} &\leq A_{k-1}(f(\vy_k) - f(\vx_{k-1})) - a_k\innp{\nabla f(\vy_k), \vw_k - \vy_k} - \frac{A_k}{L}\|\nabla f(\vy_k)\|^2\\
    &\leq \innp{\nabla f(\vy_k), A_k\vy_k -A_{k-1}\vx_{k-1} - a_k \vw_k} - \frac{A_k}{L}\|\nabla f(\vy_k)\|^2\\
    &= A_k \innp{\nabla f(\vy_k), \vy_k - \vxh_k} - \frac{A_k}{L}\|\nabla f(\vy_k)\|^2\\
    &= 0,
\end{align*}
where the second line is by convexity of $f$ (namely, by $f(\vy_k) - f(\vx_k)\leq \innp{\nabla f(\vy_k), \vy_k - \vx_k}$), the third line is by the definition of $\vxh_k$ and $\theta = \frac{a_k}{A_k},$ and the last line is by~\eqref{eq:grad-progress-y}. 

As $\frac{A_{k-1}}{A_k} = 1-\theta,$ we have that $G_k \leq (1-\theta)^{k-k_0}G_{k_0} = (1-\theta)^{k-k_0}\frac{L + \mu}{2} \|\vx^* - \vx_{k_0}\|^2,$ and, thus:
$$
f(\vx_k) - f(\vx^*) \leq \Big(1 - \sqrt{\frac{\mu}{L}}\Big)^{k-k_0}\frac{L+\mu}{2} \|\vx^* - \vx_{k_0}\|^2.
$$
%The worst case for this bound is when $k_0 = k_0.$ This is true because between $k_0$ and $k_0,$ the distances $\|\vx_k - \vx^*\|$ contract as $\|\vx_{k+1} - \vx^*\|^2 \leq $
By the same arguments as in the proof of Proposition~\ref{prop:initial-steps}, $f(\vx_{k_0}) - f(\vx^*) \leq \frac{2L D^2}{k_0 + 4}.$ By strong convexity of $f,$ this implies that also $\mu\|\vx_{k_0} - \vx^*\|^2 \leq \frac{4L D^2}{k_0 + 4}.$ To complete the proof, it remains to argue that $\big(1 - \sqrt{\frac{\mu}{L}}\big)^{K_0-k_0} \mu\|\vx_{k_0} - \vx^*\|^2 \leq \big(1 - \sqrt{\frac{\mu}{L}}\big)^{K_0-k_0}\frac{4L D^2}{k_0 + 4} \leq 4r^2.$ This simply follows by  arguing that for the choice of $k_0$ from the statement of the lemma and $k_0 \leq K_0$, we have $\big(1 - \sqrt{\frac{\mu}{L}}\big)^{K_0-k_0} \frac{1}{k_0 + 4} \leq \frac{1}{K_0 + 4},$ while the rest follows from Proposition~\ref{prop:initial-steps}. This is not hard to show and is omitted.
\end{proof}
Finally, we have the following bound on the convergence of Algorithm~\ref{algo:acc-FW}.
\prelimthm*
\begin{proof}
Follows directly by applying the standard convergence bound for FW, Proposition~\ref{prop:initial-steps}, and Lemma~\ref{lemma:acc-conv}.
\end{proof}

Note that in the argument in Proposition~\ref{prop:initial-steps} we
could have also used the Away-Step Frank-Wolfe algorithm achieving
linear convergence for the burn-in phase. However, for the easy of
exposition we used the simpler bound for the warm-up; we will use the
Away-Step Frank-Wolfe algorithm in Section~\ref{sec:fullResult}. 

%
%
%%%%%%%%%%%%%%%%%%%%%%%%%%%%%%%%%%%%%%%%
\subsection{Proofs and Results for Optimum in the Relative Interior of a Face of $\cx$ from Section~\ref{sec:fullResult}}
\label{appx:fullResult}

In this section we provide full technical details for the results in Section~\ref{sec:fullResult} and we also restate material from that section here once again to facilitate reading. 

We will now formulate the general case that subsumes the case from above. We assume that, given points $\vx_1, ..., \vx_m$ and a point $\vy,$ the following  problem is easily solvable:
\begin{equation} \tag{\ref{eq:convProject}}%\label{eq:convProject}
  \min_{\substack{\vu = \sum_{i=1}^m \lambda_i \vx_i, \\ \vlambda \in \Delta_m}} \frac{1}{2}\|\vu - \vy\|^2. \end{equation}
In other words, we assume that the projection onto the convex hull of a given set of vertices can be implemented efficiently; however, we do not require access to a membership oracle anymore. Solving this problem amounts to minimizing a quadratic function over the probability simplex.  The size of the program $m$ from Eq.~\eqref{eq:convProject} corresponds to the size of the active set of the CG-type method employed within LaCG. Note that $m$ is never larger than the iteration count $k,$ and is often much lower than the dimension of the original problem. Further, there exist multiple heuristics for keeping the size of the active set small in practice (see, e.g.,~\cite{BPZ2017}). The projection from Eq.~\eqref{eq:convProject} does not require access to either the first-order oracle or the linear optimization oracle. Finally, due to Lemma~\ref{lemma:modified-agdp} stated below, we only need to solve this problem to accuracy of the order $\frac{\epsilon}{\sqrt{\mu L}}$, where $\epsilon$ is the target accuracy of the program. 

%%%%%%%%%%%%%%%%%%%%%%%%%%%%%%%%%%%%%%% BEGIN COMMENT
\begin{comment}
\begin{algorithm}
\caption{Away-Step Frank-Wolfe Iteration: AFW($\vlambda, \cs, \vx$)}
\label{algo:away-FW}
\begin{algorithmic}[1]
\State Set FW direction: $\vs = \argmin_{\vu \in \cx}\innp{\nabla f(\vx), \vu},$ $\vd^{\mathrm{FW}} = \vs - \vx$
\State Set Away direction: $\vv = \argmax_{\vu \in \cs}\innp{\nabla f(\vx), \vu},$ $\vd^{\mathrm{A}} = \vx - \vv$
\If{ $\innp{-\nabla f(\vx), \vd^{\mathrm{FW}}} \geq \innp{-\nabla f(\vx), \vd^{\mathrm{A}}}$}
\State $\vd = \vd^{\mathrm{FW}},$ $\gamma_{\max} = 1$
\Else 
\State $\vd = \vd^{\mathrm{A}},$ $\gamma_{\max} = \frac{\vlambda(\vv)}{1-\vlambda(\vv)}$
\EndIf
\State $\gamma' = \argmin_{\gamma \in [0, \gamma_{\max}]}f(\vx + \gamma \vd)$
\State $\vx' = \vx + \gamma' \vd$; update $\vlambda$ (to $\vlambda'$)
\State $\cs' = \{\vu \in \cs \cup \{\vs\}: \vlambda'(\vu) > 0\}$
\State \textbf{return} $\vx', \, \cs', \, \vlambda'$ 
\end{algorithmic}
\end{algorithm}
\end{comment}
%%%%%%%%%%%%%%%%%%%%%%%%%%%%%%%%%%% END COMMENT

For simplicity, we illustrate the framework using AFW as the coupled CG method. However, the same ideas can be applied to other active-set-based methods such as PFW in a straightforward manner. Unlike in the previous subsection, the assumption that $\cx$ is a polytope is crucial here, as the linear convergence for the AFW algorithm established in~\cite{lacoste2015global} relies on a
constant, the \emph{pyramidal width}, that is only known to be bounded away
from $0$ for polytopes. For completeness, we provide the pseudocode for one iteration of
AFW (as stated in~\cite{lacoste2015global}) in Algorithm~\ref{algo:away-FWAppx} below.
%
%We assume that, given a set of points $\vx_1, ..., \vx_m$ and a point $\vy,$ the following optimization problem is easily solvable: 
%\begin{equation}\notag %\label{eq:convProject} 
%\min_{\substack{\vu = \sum_{i=1}^m \lambda_i \vx_i, \\ \vlambda \in \Delta_m}} \frac{1}{2}\|\vu - \vy\|^2. \end{equation} 
%In other words, we assume that the projection onto the convex hull of a given set of vertices can be implemented efficiently, however we do not require access to a membership oracle anymore. Moreover, note that this projection problem does neither require access to the first-order oracle nor the linear optimization oracle. Finally, due to Lemma~\ref{lemma:modified-agdp}, we only need to solve this problem to accuracy of the order $\frac{\epsilon}{\sqrt{\mu L}},$ where $\epsilon$ is the target accuracy of the program. 
%
%The assumption that we only consider polytopes is important as the linear convergence for the \emph{Away-Step Frank
%  Wolfe (AFW)} algorithm established in \cite{lacoste2015global} relies on a
%constant, the \emph{pyramidal width}, that is only known to be bounded away
%from $0$ for polytopes. We briefly recall the pseudocode for one iteration of
%Away-Step Frank-Wolfe algorithm (as stated in~\cite{lacoste2015global}) in Algorithm~\ref{algo:away-FWAppx}. The AFW algorithm starts with an
%arbitrary point $\vx_0$ from the feasible set and active set
%$\cs_0 = \{\vx_0\}$. 
%
In the following, the vector
$\vlambda_k \in \Delta_m$ with $m = |\cs_k|$ denotes the barycentric
coordinates of the current iterate $\vx_k$ over the active set
$\cs_k$.

\begin{algorithm}
\caption{Away-Step Frank-Wolfe Iteration: AFW($\vlambda, \cs, \vx$)}
\label{algo:away-FWAppx}
\begin{algorithmic}[1]
\State Set FW direction: $\vs = \argmin_{\vu \in \cx}\innp{\nabla f(\vx), \vu},$ $\vd^{\mathrm{FW}} = \vs - \vx$
\State Set Away direction: $\vv = \argmax_{\vu \in \cs}\innp{\nabla f(\vx), \vu},$ $\vd^{\mathrm{A}} = \vx - \vv$
\If{ $\innp{-\nabla f(\vx), \vd^{\mathrm{FW}}} \geq \innp{-\nabla f(\vx), \vd^{\mathrm{A}}}$}
\State $\vd = \vd^{\mathrm{FW}},$ $\gamma_{\max} = 1$
\Else 
\State $\vd = \vd^{\mathrm{A}},$ $\gamma_{\max} = \frac{\vlambda(\vv)}{1-\vlambda(\vv)}$
\EndIf
\State $\gamma' = \argmin_{\gamma \in [0, \gamma_{\max}]}f(\vx + \gamma \vd)$
\State $\vx' = \vx + \gamma' \vd$; update $\vlambda$ (to $\vlambda'$)
\State $\cs' = \{\vu \in \cs \cup \{\vs\}: \vlambda'(\vu) > 0\}$
\State \textbf{return} $\vx', \, \cs', \, \vlambda'$ 
\end{algorithmic}
\end{algorithm}

We will need the following fact that establishes the existence of a radius $r$ (and hence iteration $K_r$) from which onwards all active sets $\cs_k$ maintained by our algorithm ensure that $\vx^* \in \co(\cs_k)$ for all $k \geq K_r$.

\begin{fact}[Active set convergence]
  \label{fact:asConvergence}
  There exists $r > 0$ such that for any subset $\cs \subseteq \vertex(\cx)$ and point $\vx \in \cx$ with $\vx \in \co(\cs)$ and $\norm{\vx - \vx^*} \leq r$ it follows $\vx^* \in \co(\cs)$. 
  \begin{proof}
    Let $\cs \subseteq \vertex(\cx)$ be an arbitrary subset of vertices, so that \(\vx^* \not\in \co(\cs)\). As $\cs$ is closed, there exists \(0 < r_\cs \defeq \min_{\vx \in \cs} \norm{\vx - \vx^*}\). Let \(2r\) be the minimum over all such \(\cs\), which is bounded away from \(0\) as there are only finitely many such subsets. It follows that if $\norm{\vx - \vx^*} \leq r$ then $\vx^* \in \co(\cs_k)$. 
  \end{proof}
\end{fact}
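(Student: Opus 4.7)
The plan is to exploit the finiteness of $\vertex(\cx)$, which follows because $\cx$ is a polytope. Since $\vertex(\cx)$ is a finite set, the collection of subsets $\cs \subseteq \vertex(\cx)$ is also finite, and we can reduce the claim to a finite case analysis over this collection.

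First, I would partition the relevant subsets into two classes. If $\cs$ already satisfies $\vx^* \in \co(\cs)$, then the conclusion is immediate and imposes no constraint on $r$. So the only subsets that matter are the (finitely many) $\cs \subseteq \vertex(\cx)$ with $\vx^* \notin \co(\cs)$. For each such $\cs$, the set $\co(\cs)$ is nonempty, compact (as the convex hull of finitely many points in $\rr^n$), and convex, and does not contain $\vx^*$. Hence the distance $d_\cs \defeq \min_{\vu \in \co(\cs)} \norm{\vu - \vx^*}$ is attained and is strictly positive.

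Second, because there are only finitely many such subsets, I can define
\begin{equation*}
r \defeq \tfrac{1}{2} \min \bigl\{ d_\cs : \cs \subseteq \vertex(\cx),\; \vx^* \notin \co(\cs) \bigr\} \; > \; 0,
\end{equation*}
with the convention that if no such $\cs$ exists (a degenerate case), we take any $r > 0$. The conclusion then follows by contrapositive: given $\vx \in \co(\cs)$ with $\norm{\vx - \vx^*} \leq r$, if we had $\vx^* \notin \co(\cs)$, then by definition $d_\cs \leq \norm{\vx - \vx^*} \leq r < d_\cs$, a contradiction. Therefore $\vx^* \in \co(\cs)$.

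The only subtle point — and in fact the reason the author's sketched proof is not quite right as stated — is that one should take the minimum over $\co(\cs)$ rather than just over the vertex set $\cs$: a convex combination of vertices can be much closer to $\vx^*$ than any single vertex, so a lower bound based on vertex distances alone is insufficient. Using the distance to the compact convex set $\co(\cs)$ fixes this, and the positivity of that distance is exactly the standard fact that a point not contained in a closed convex set has strictly positive distance to it. The rest is just bookkeeping: taking a finite minimum over the finite collection of "bad" subsets and choosing $r$ strictly below it.
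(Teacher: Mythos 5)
Your proposal follows essentially the same route as the paper's proof: reduce to the finitely many subsets $\cs \subseteq \vertex(\cx)$ with $\vx^* \notin \co(\cs)$, assign each a positive distance, and take $r$ below the finite minimum. The one substantive difference is that you measure the distance from $\vx^*$ to the compact convex set $\co(\cs)$, whereas the paper's proof as written takes $\min_{\vx \in \cs}\norm{\vx - \vx^*}$ over the vertex set $\cs$ itself; your observation that the latter is insufficient is correct (a convex combination of far-away vertices can lie arbitrarily close to $\vx^*$), so your version is the repaired form of the argument the paper evidently intends, and the positivity of $d_\cs$ follows from compactness of $\co(\cs)$ together with $\vx^* \notin \co(\cs)$ exactly as you state.
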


Let $r_0$ denote the \emph{critical radius} from Fact~\ref{fact:asConvergence} and $K_0$ the \emph{critical iteration} so that \(\norm{\vx^* - \vx_{k}} \leq r_0\) is ensured for all $k \geq K_0$. The next proposition bounds the magnitude of $K_0$.

\begin{proposition}[Finite burn-in with linear rate] \label{proposition:BurnIn-Phase} Denote by
  $\delta$ the pyramidal width of the polytope $\cx$, as defined in
  \cite{lacoste2015global}. Then for all $k \geq K_0$ it holds \(\vx^* \in \co(\cs_k)\) and 
  for any algorithm that makes in each iteration at least
  as much progress as the Away-Step Frank-Wolfe Algorithm, we have the bound 
$$
K_0 \leq \frac{8 L}{\mu} \left( \frac{D}{\delta} \right)^2 \log \left(
  \frac{2(f(\vx_0) - f(\vx^*))}{\mu {r_0}^2} \right).
$$
\end{proposition}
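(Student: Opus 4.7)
The plan is to reduce the claim to two standard ingredients: (i) the linear convergence guarantee of AFW from \cite{lacoste2015global}, which carries over to any algorithm dominating AFW in per-iteration progress, and (ii) Fact~\ref{fact:asConvergence}, which converts closeness to $\vx^*$ into active-set containment of $\vx^*$. The critical radius $r_0$ from Fact~\ref{fact:asConvergence} is precisely the object around which the argument is built.

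First, I would invoke the standard AFW convergence theorem: for an $L$-smooth, $\mu$-strongly convex objective over a polytope with pyramidal width $\delta$ and diameter $D$, AFW satisfies a geometric decay of the primal gap with rate parameter on the order of $\frac{\mu}{4L}\left(\frac{\delta}{D}\right)^2$, where one must account for the fact that at most half of the iterations are ``good steps'' (the drop steps are charge-accounted). This gives
\[
f(\vx_k) - f(\vx^*) \;\leq\; \left(1 - \tfrac{\mu \delta^2}{8 L D^2}\right)^{k} \bigl(f(\vx_0) - f(\vx^*)\bigr),
\]
and by the hypothesis that our algorithm makes at least as much per-iteration progress as AFW (combined with AFW being a descent method), the same bound holds for the sequence in question.

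Second, I would convert this function-value bound into a distance bound via $\mu$-strong convexity: $\frac{\mu}{2}\|\vx_k - \vx^*\|^2 \leq f(\vx_k) - f(\vx^*)$. So to force $\|\vx_k - \vx^*\| \leq r_0$ it suffices to force $f(\vx_k) - f(\vx^*) \leq \frac{\mu r_0^2}{2}$. Inverting the geometric decay above and using $\log(1/(1-\rho)) \geq \rho$ yields exactly
\[
k \;\geq\; \frac{8 L}{\mu}\left(\frac{D}{\delta}\right)^2 \log\!\left(\frac{2(f(\vx_0) - f(\vx^*))}{\mu r_0^2}\right),
\]
matching the claimed bound on $K_0$. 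Once $\|\vx_k - \vx^*\| \leq r_0$, I apply Fact~\ref{fact:asConvergence} to the active set $\cs_k$ (which contains $\vx_k$ by construction of AFW) to conclude $\vx^* \in \co(\cs_k)$.

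The point that requires a moment of care -- and the only possible obstacle -- is \emph{persistence}: the statement asserts $\vx^* \in \co(\cs_k)$ for \emph{all} $k \geq K_0$, yet AFW freely adds and drops vertices, so one could worry that the active set later loses $\vx^*$. This is handled by monotonicity of the function values: since the coupled algorithm never increases $f$, once $f(\vx_k) - f(\vx^*) \leq \frac{\mu r_0^2}{2}$ holds it holds forever, so $\|\vx_j - \vx^*\| \leq r_0$ for every $j \geq k$, and Fact~\ref{fact:asConvergence} can be reapplied at each such $j$ to the active set $\cs_j$ (which always contains the current iterate $\vx_j$). This upgrades the pointwise containment at iteration $K_0$ to the uniform claim for all subsequent iterations, completing the proof.
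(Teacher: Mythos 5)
Your proof is correct and follows essentially the same route as the paper's (very terse) argument: invoke the AFW linear convergence rate from \cite{lacoste2015global}, convert the primal gap to a distance bound via $\mu$-strong convexity, and apply Fact~\ref{fact:asConvergence}. You additionally spell out the persistence of $\vx^* \in \co(\cs_k)$ for all $k \geq K_0$ via monotonicity of the (AFW) function values, a detail the paper leaves implicit.
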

\begin{proof}
  Since the algorithm makes at least as much progress as the Away-Step
  Frank-Wolfe algorithm, we can use the convergence rate of
  \cite{lacoste2015global} to bound the primal gap at step $k$. Using
  the $\mu$-strong convexity of $f$, we have that
  $f(\vx_k) - f(\vx^*) \geq \mu/2 \| \vx_k - \vx^*\|$, allowing us to
  relate the primal gap to the distance to the optimum. 
\end{proof}

\iffalse
The algorithm that we construct is a hybrid version of the 
Away-Step Frank-Wolfe algorithm and an accelerated algorithm over the
convex hull of certain active sets, respectively. While we perform the
analysis here for the Away-Step Frank-Wolfe variant, it can be also
extended to combine \emph{Pairwise Conditional Gradients} with
accelerated steps.  Similar to the
algorithm from the previous subsection, the algorithm will be
monotonic to ensure that enough progress is made regardless of the
setting and that the distance to the optimal solution is contracting. The particular version of the accelerated algorithm we use
here is $\mu$AGD+ from~\cite{cohen2018acceleration}; we start by
showing the following generic result for a modification of
$\mu$AGD+. Note that, unlike its original version from~\cite{cohen2018acceleration}, the version we provide here allows coupling the method with another optimization method, supports inexact minimization oracles, and supports changes in the convex set on which projections from Eq.~\eqref{eq:magdp-equiv-min} are performed.
\fi
%
To achieve local acceleration, we couple the AFW steps with a modification of the $\mu$AGD+ algorithm~\cite{cohen2018acceleration} that we introduce here. Unlike its original version~\cite{cohen2018acceleration}, the version  provided here (Lemma~\ref{lemma:modified-agdp}) allows coupling of the method with an arbitrary  sequence of points from the feasible set, it supports inexact minimization oracles, and it supports changes in the convex set (which correspond to active sets from AFW) on which projections are performed. These modifications are crucial to being able to achieve local acceleration without any additional knowledge about the polytope or the position of the minimizer $\vx^*.$ Further, we are not aware of any other methods that allow changes to the feasible set as described here, and, thus, the result from Lemma~\ref{lemma:modified-agdp} may be of independent interest.
\keylemma*
\begin{proof}
We first show by induction on $k$ that $\vx_k \in \cx.$ The claim is true initially, by the statement of the lemma. Assume that the claim is true for the iterates up to $k-1$. Then, $\vxh_k$ must be from $\cx,$ as it is a convex combination of $\vx_{k-1} \in \cx$ (by the inductive hypothesis) and $\vw_k \in \cc_{k} \subseteq \cx.$
By assumption, $\Tilde{\vx}_k \in \cx$, for all $k$. Thus, it must be $\vx_k \in \cx.$

The rest of the proof relies on showing that $A_k G_k \leq A_{k-1}G_{k-1} + \epsilon^m_{k} + \epsilon_{k-1}^m$ and on bounding $A_0 G_0,$ where $G_k$ is an approximate duality gap defined as $G_k = U_k - L_k.$ Here, the upper bound is defined as $U_k = f(\vx_k),$ while the lower bound on $L_k \geq f(\vx^*)$ can be defined as (see Appendix~\ref{appx:adgt}):
\begin{equation}\notag
    \begin{aligned}
        L_k \defeq & \frac{\sum_{i=0}^k a_i f(\vy_i) + \min_{\vu \in \cc_k}m_k(\vu) - \frac{\mu_0}{2}\|\vx^* - \vy_0\|^2}{A_k},
    \end{aligned}
\end{equation}
where $\mu_0 = L - \mu$ and 
$$
m_k(\vu) \defeq \sum_{i=0}^k a_i \innp{\nabla f(\vy_i), \vu - \vy_i} + \sum_{i=0}^k a_i \frac{\mu}{2}\|\vu - \vy_i\|^2 + \frac{\mu_0}{2}\|\vu - \vy_0\|^2.
$$
It is not hard to verify that:
$$
\argmin_{\vu \in \cc_k} \Big\{ -\innp{\vz_k, \vu} + \frac{\mu A_k + \mu_0}{2}\|\vu\|^2 \Big\} = \argmin_{\vu \in \cc_k} m_k(\vu), \quad \forall k.
$$
Let us start by bounding $A_0G_0.$ Recall that $a_0 = A_0 = 1$ and $\vx_0 = \vw_0.$ By smoothness of $f,$
\begin{equation}\label{eq:magdp-init-U}
    U_0 = f(\vx_0) = f(\vw_0) \leq f(\vy_0) + \innp{\nabla f(\vy_0), \vw_0 - \vy_0} + \frac{L}{2}\|\vw_0 - \vy_0\|^2.
\end{equation}
On the other hand, as $\mu_0 = L-\mu$ and $\vw_0$ is an $\epsilon^m_0$-approximate minimizer of $\argmin_{\vu \in \cc}m_0(\vu_0),$ we have:
\begin{equation}\label{eq:magdp-init-m}
    \min_{\vu \in \cc_0}m_0(\vu) \geq m_0 (\vw_0) - \epsilon^m_0 = \innp{\nabla f(\vy_0), \vw_0 - \vy_0} + \frac{L}{2}\|\vw_0 - \vy_0\|^2 - \epsilon^m_0.
\end{equation}
Combining Eqs.~\eqref{eq:magdp-init-U} and~\eqref{eq:magdp-init-m} with the definition of $L_k,$ we have that:
$$
A_0 G_0 \leq \frac{\mu_0 \|\vx^* - \vy_0\|^2}{2}  + \epsilon_0^m = \frac{(L-\mu)\|\vx^* - \vy_0\|^2}{2} + \epsilon_0^m. 
$$

To complete the proof, it remains to show that $G_k \leq \frac{A_{k-1}}{A_k}G_{k-1} = (1-\theta)G_{k-1}.$ Observe first, as $f(\vx_k) \leq f(\vxh_k),$ that we can bound the change in the upper bound as:
\begin{equation}\notag
    \begin{aligned}
        A_k U_k - A_{k-1}U_{k-1} &= A_k f(\vx_k) - A_{k-1}f(\vx_{k-1})\\
        &\leq a_k f(\vy_k) + A_k(f(\vxh_k) - f(\vy_k)) + A_{k-1}(f(\vy_{k}) - f(\vx_{k-1})).
    \end{aligned}
\end{equation}
Using smoothness and convexity of $f,$ we further have:
\begin{equation}\label{eq:magdp-change-in-U}
    \begin{aligned}
        A_k U_k - A_{k-1}U_{k-1} \leq & a_k f(\vy_k) + \innp{\nabla f(\vy_k), A_k \vxh_k - A_{k-1}\vx_{k-1} - a_k \vy_k} + \frac{A_k L}{2}\|\vxh_k - \vy_k\|^2.
    \end{aligned}
\end{equation}

By the definition of $L_k,$ the change in the lower bound is:
\begin{equation}\label{eq:magdp-change-in-L-1}
 A_k L_k - A_{k-1}L_{k-1} = a_k f(\vy_k) + m_k(\vw_k^*) - m_{k-1}(\vw_{k-1}^*),   
\end{equation}
where $\vw_k^* = \argmin_{\vu \in \cc_k} m_k(\vu)$. 

To bound $m_k(\vw_k^*) - m_{k-1}(\vw_{k-1}^*),$ observe first that:
\begin{align}\label{eq:inexact-m}
    m_k(\vw_k^*) - m_{k-1}(\vw_{k-1}^*) \geq m_k(\vw_k) - m_{k-1}(\vw_{k-1}^*) - \epsilon^m_k.
\end{align}
as $\vw_k \in \cc_k$ is an $\epsilon^m_k$-approximate minimizer of $m_k.$
Further, 
observe that $m_k(\vu) = m_{k-1}(\vu) + a_k \innp{\nabla f(\vy_k), \vu - \vy_k} + a_k \frac{\mu}{2}\|\vu - \vy_k\|^2$. Hence, we have:
\begin{equation}\label{eq:m-change-1}
\begin{aligned}
    m_k(\vw_k)& -  m_{k-1}(\vw_{k-1}^*)\\
    &= a_k \innp{\nabla f(\vy_k), \vw_k - \vy_k} + a_k \frac{\mu}{2}\|\vw_k - \vy_k\|^2
    + m_{k-1}(\vw_k) - m_{k-1}(\vw_{k-1}^*).%\\
    %&\geq a_k \innp{\nabla f(\vy_k), \vw_k - \vy_k} + a_k \frac{\mu}{2}\|\vw_k - \vy_k\|^2 + \frac{\mu A_{k-1}}{2}\|\vw_k - \vw_{k-1}\|^2  - \epsilon^m_{k-1},
\end{aligned}
\end{equation}
As $m_k(\vu)$ can be expressed as the sum of $\frac{\mu A_k + \mu_0}{2}\|\vu\|^2$ and terms that are linear in $\vu,$ it is $(\mu_0 + \mu A_k)$-strongly convex. Observe that, as $\vw_{k-1}^*$ minimizes $m_{k-1}$ over $\cc_{k-1}$ and $\vw_k \in \cc_k \subseteq \cc_{k-1},$ by the first-order optimality condition, we have  $\innp{\nabla m_{k-1}(\vw_{k-1}^*), \vw_k - \vw_{k-1}^*} \geq 0.$ Thus, it further follows that:
\begin{equation}\label{eq:m-change-2}
\begin{aligned}
    m_{k-1}(\vw_k) %&\geq m_{k-1}(\vw_{k-1}^*) + \innp{\nabla m_{k-1}(\vw_{k-1}^*), \vw_k - \vw_{k-1}^*} + \frac{\mu_0 + \mu A_k}{2}\|\vw_k - \vw_{k-1}^*\|^2\\
    &\geq m_{k-1}(\vw_{k-1}^*) + \frac{\mu_0 + \mu A_{k-1}}{2}\|\vw_k - \vw_{k-1}^*\|^2 .%- \epsilon^m_{k-1}.
\end{aligned}
\end{equation}
Next, observe that, as $m_{k-1}$ is $(\mu_0 + \mu A_{k-1})$-strongly convex, $\vw_{k-1}^*$ minimizes $m_{k-1}$, and $\vw_{k-1}$ is an approximate minimizer, we have:
\begin{equation}\label{eq:m-strong-cvxity}
    \frac{\mu_0 + \mu A_{k-1}}{2}\|\vw_{k-1} - \vw_{k-1}^*\|^2 \leq m_{k-1}(\vw_{k-1}) - m_{k-1}(\vw_{k-1}^*) \leq \epsilon^m_{k-1}.
\end{equation}
Using Young's inequality ($(a+b)^2 \leq 2a^2 + 2b^2$ and so $a^2 \geq \frac{(a+b)^2}{2} - {b^2}$), we have, using Eq.~\eqref{eq:m-strong-cvxity}, that:
\begin{align*}
 \frac{\mu_0 + \mu A_{k-1}}{2}\|\vw_k - \vw_{k-1}^*\|^2 &\geq \frac{\mu_0 + \mu A_{k-1}}{4}\|\vw_k - \vw_{k-1}\|^2 - \frac{\mu_0 + \mu A_{k-1}}{2}\|\vw_{k-1} - \vw_{k-1}^*\|^2\\
 &\geq \frac{\mu_0 + \mu A_{k-1}}{4}\|\vw_k - \vw_{k-1}\|^2 - \epsilon^m_{k-1}.
\end{align*}
Combining the last inequality with Eqs.~\eqref{eq:inexact-m}--\eqref{eq:m-change-2},  we have:
\begin{align*}
    m_k(\vw_k^*) - m_{k-1}(\vw_{k-1}^*) \geq & a_k \innp{\nabla f(\vy_k), \vw_k - \vy_k} + a_k \frac{\mu}{2}\|\vw_k - \vy_k\|^2\\
    & + \frac{\mu_0 + \mu A_{k-1}}{4}\|\vw_k - \vw_{k-1}\|^2 - \epsilon^m_{k-1} - \epsilon^m_k.
\end{align*}
Using that $\mu_0 \geq 0,$ $\theta = \frac{a_k}{A_k},$ and applying Jensen's inequality to the last expression, 
%
%where the inequality comes from $\mu_0 \geq 0$,
%$$
%m_{k-1}(\vw_k) \geq m_{k-1}(\vw_{k-1}) + \innp{\nabla m_{k-1}(\vw_{k-1}), \vw_k - \vw_{k-1}} + \frac{\mu_0 + \mu A_{k-1}}{2}\|\vw_k - \vw_{k-1}\|^2,
%$$
%and 
%\begin{align*}
%&\innp{\nabla m_{k-1}(\vw_{k-1}), \vw_k - \vw_{k-1}}\\
%&\hspace{2cm}\geq m_{k-1}(\vw_k) -m_{k-1}(\vw_{k-1}^*) + m_{k-1}(\vw_{k-1}^*) - m_{k-1}(\vw_{k-1}) \geq -\epsilon^m_{k-1}.
%\end{align*} 
%The last inequality uses that $m_{k-1}$ is convex, that $\vw_{k-1}^*$
%minimizes $m_{k-1}$ over $\cc_{k-1}$, that $\vw_k \in \cc_k \subseteq \cc_{k-1}$, and that $\vw_{k-1}$ is an $\epsilon^m_{k-1}$-approximate minimizer of $m_{k-1}$.  Applying Jensen's inequality and $\theta = \frac{a_k}{A_k}$, and combining with~\eqref{eq:inexact-m}, we further have:
\begin{align*}
m_{k}(\vw_k^*)& - m_{k-1}(\vw_{k-1}^*) \\
&\geq a_k \innp{\nabla f(\vy_k), \vw_k - \vy_k} + \frac{\mu A_k}{4}\|\vw_k - (1-\theta)\vw_{k-1} - \theta \vy_k\|^2 - \epsilon^m_k - \epsilon^m_{k-1}.
\end{align*}
It is not hard to verify that $\vxh_k - \vy_k = \theta(\vw_k - (1-\theta)\vw_{k-1} - \theta \vy_k).$ Hence, combining the last inequality with Eq.~\eqref{eq:magdp-change-in-L-1}:
\begin{equation}\label{eq:magdp-change-in-L-2}
    A_k L_k - A_{k-1}L_{k-1} \geq a_k f(\vy_k) + a_k \innp{\nabla f(\vy_k), \vw_k - \vy_k} + \frac{\mu A_k}{4\theta^2}\|\vxh_k - \vy_k\|^2 - \epsilon^m_k - \epsilon^m_{k-1}.
\end{equation}
Finally, combining Eqs.~\eqref{eq:magdp-change-in-U} and~\eqref{eq:magdp-change-in-L-2}, we have:
\begin{align*}
    A_kG_k - A_{k-1} G_{k-1} \leq & \innp{\nabla f(\vy_k), A_k \vxh_k - A_{k-1}\vx_{k-1} - a_k \vw_k} + \frac{A_k}{2}\Big(L - \frac{\mu}{2\theta^2}\Big)\|\vxh_k - \vy_k\|^2 \\
    &+ \epsilon^m_k + \epsilon^m_{k-1}\\
    \leq & \epsilon^m_k + \epsilon^m_{k-1},
\end{align*}
as $\vxh_k = \frac{A_{k-1}}{A_k}\vx_{k-1} + \frac{a_k}{A_k}\vw_k$ and $\theta = \sqrt{\frac{\mu}{2L}},$ completing the proof.
%%%%%%%%%5
\end{proof}

A simple corollary of Lemma~\ref{lemma:modified-agdp} that will be useful for our analysis is as follows. It shows that if the algorithm from Lemma~\ref{lemma:modified-agdp} is not restarted too often, we do not lose more than a constant factor (two) in the final bound on the iteration count.

\begin{corollary}\label{cor:restarts}
Define a restart of the method from Lemma~\ref{lemma:modified-agdp} as setting $a_k = A_k = 1,$ $\vy_k = \vx_{k-1},$ $\vw_k = \vy_k,$ and $\vz_k = L\vy_k - \nabla f(\vy_k)$. Let $\epsilon^m_i = \frac{a_i}{2}\bar{\epsilon},$ for some $\epsilon^m \geq 0.$ If the method is restarted no more frequently than every $\frac{2}{\theta}\log(1/(2\theta^2) - 1)$ iterations, where $\theta = \sqrt{\mu/(2L)},$ then:
$$
f(\vx_k) - f(\vx^*) \leq \frac{L - \mu}{\mu}\big(1 - \theta\big)^{k/2}(f(\vx_0) - f(\vx^*)) + 2\bar{\epsilon}.
$$
\end{corollary}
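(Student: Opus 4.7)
}

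The plan is to apply Lemma~\ref{lemma:modified-agdp} independently on each restart period, convert its squared-distance bound into a function-value recursion via $\mu$-strong convexity, and then iterate the resulting contraction.

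First, I would fix the bookkeeping. Let $0 = T_0 < T_1 < T_2 < \ldots$ denote the iterations at which restarts occur, and let $P_j \defeq T_j - T_{j-1} \geq P_{\min} \defeq \tfrac{2}{\theta}\log(1/(2\theta^2) - 1)$ be the length of the $j$-th period. Within the $j$-th period the state is re-initialized so that $\vy_{T_{j-1}} = \vx_{T_{j-1}-1}$, and Lemma~\ref{lemma:modified-agdp} applies to the subsequent iterations with the ``fresh'' weights $a_0 = 1$, $a_i/A_i = \theta$. With $\epsilon^m_i = (a_i/2)\bar\epsilon$, a direct computation gives
\[
\frac{2\sum_{i=0}^{k-1}\epsilon^m_i + \epsilon^m_k}{A_k} \;=\; \bar\epsilon\Bigl(1 - \tfrac{\theta}{2}\Bigr) \;\leq\; \bar\epsilon,
\]
using $A_{k-1}/A_k = 1-\theta$ and $a_k/A_k = \theta$.

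Next, I would combine Lemma~\ref{lemma:modified-agdp} with $\mu$-strong convexity, which yields $\tfrac{\mu}{2}\|\vy_0 - \vx^*\|^2 \leq f(\vy_0) - f(\vx^*)$. For any $k \in [T_{j-1}, T_j]$, writing $s = k - T_{j-1}$ and $\Delta_j \defeq f(\vx_{T_j}) - f(\vx^*)$, this gives the within-period bound
\[
f(\vx_k) - f(\vx^*) \;\leq\; (1-\theta)^{s}\,\tfrac{L-\mu}{\mu}\,\Delta_{j-1} + \bar\epsilon,
\]
and in particular $\Delta_j \leq (1-\theta)^{P_j}\tfrac{L-\mu}{\mu}\Delta_{j-1} + \bar\epsilon$. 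The restart-period bound $P_j \geq P_{\min}$ is precisely what is needed to tame the $(L-\mu)/\mu$ factor: using $\log(1-\theta) \leq -\theta$, one has $(1-\theta)^{P_{\min}/2} \leq 2\theta^2/(1-2\theta^2) = \mu/(L-\mu)$, so that $(1-\theta)^{P_j}\tfrac{L-\mu}{\mu} \leq (1-\theta)^{P_j/2}$. This yields the clean per-period contraction
\[
\Delta_j \;\leq\; (1-\theta)^{P_j/2}\,\Delta_{j-1} + \bar\epsilon.
\]

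Unrolling this recursion over $j$ periods and using $T_j = \sum_{l \leq j} P_l$ produces
\[
\Delta_j \;\leq\; (1-\theta)^{T_j/2}\,\Delta_0 \;+\; \bar\epsilon \sum_{l=1}^{j} (1-\theta)^{(T_j - T_l)/2},
\]
and the geometric sum is controlled via $(1-\theta)^{P_{\min}/2} \leq \mu/(L-\mu) \leq 1/2$ (in the regime of interest) by $\sum_{m \geq 0} (1/2)^m = 2$, yielding the additive $2\bar\epsilon$ term at restart points. Finally, to handle iterations strictly inside a period, I would reinsert the within-period bound for $k \in (T_{j-1}, T_j]$, noting that $s + T_{j-1}/2 \geq k/2$ since $T_{j-1} \leq k$, so $(1-\theta)^{s + T_{j-1}/2} \leq (1-\theta)^{k/2}$. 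Combining with the restart-point bound for $\Delta_{j-1}$ yields the claimed
\[
f(\vx_k) - f(\vx^*) \;\leq\; \tfrac{L-\mu}{\mu}(1-\theta)^{k/2}(f(\vx_0) - f(\vx^*)) + 2\bar\epsilon.
\]

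The main obstacle, in my view, is book-keeping: carefully verifying that the $(L-\mu)/\mu$ strong-convexity blow-up appears \emph{once} on the leading term (from the most recent within-period estimate) and is absorbed into the contraction factor $(1-\theta)^{P_j/2}$ across all earlier periods, so that it neither compounds across restarts nor inflates the $2\bar\epsilon$ additive error beyond a harmless constant. The calibration of $P_{\min}$ as $(2/\theta)\log(1/(2\theta^2)-1)$ is precisely what makes this bookkeeping work.
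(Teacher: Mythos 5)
Your proposal follows the paper's proof essentially step for step: the same decomposition into restart periods of length at least $H=\frac{2}{\theta}\log(1/(2\theta^2)-1)$, the same per-period application of Lemma~\ref{lemma:modified-agdp} converted to function values via $\mu$-strong convexity, the same calibration $(1-\theta)^{H/2}\le \frac{2\theta^2}{1-2\theta^2}=\frac{\mu}{L-\mu}$ used to absorb the strong-convexity prefactor into half of the contraction, the same unrolling with a geometric sum, and the same $s+T_{j-1}/2\ge k/2$ argument for iterations strictly inside a period. The leading term $\frac{L-\mu}{\mu}(1-\theta)^{k/2}(f(\vx_0)-f(\vx^*))$ is derived correctly and identically in both arguments, and your computation of the per-period oracle error $(1-\tfrac{\theta}{2})\bar{\epsilon}\le\bar{\epsilon}$ is right.

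The one place your chain does not close as written is the additive constant, which is exactly the point you flag as ``the main obstacle'' without resolving it. At a restart point you obtain accumulated error $2\bar{\epsilon}$ (granting $\mu/(L-\mu)\le 1/2$), but for an iteration $k$ with $s=k-T_{j-1}$ small, the within-period estimate multiplies that accumulated error by a fresh factor $\frac{L-\mu}{\mu}(1-\theta)^{s}$, which nothing absorbs when $s<H/2$; your final display therefore yields an additive term of order $\frac{L-\mu}{\mu}\bar{\epsilon}$ rather than $2\bar{\epsilon}$. To be fair, the paper's own proof is loose at the same spot: it bounds the geometric sum $\sum_{j\ge 0}(1-\theta)^{jH/2}$ by $2\theta^2$, which cannot hold since the first summand already equals $1$, and it is precisely that (too small) bound that lets the $\frac{L-\mu}{\mu}$ prefactor cancel in its last display. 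A clean repair uses the monotonicity built into the coupling (the previous iterate is always a candidate in the $\argmin$): for $s<H/2$ one bounds $f(\vx_k)\le f(\vx_{T_{j-1}})$ and absorbs $(1-\theta)^{-s/2}\le(1-\theta)^{-H/4}\le \frac{L-\mu}{\mu}$ into the leading prefactor, recovering the stated rate with an $O(\bar{\epsilon})$ additive error. Since Theorem~\ref{thm:main} invokes the corollary with exact projections ($\bar{\epsilon}=0$), this constant does not affect the main result, but as stated neither your bookkeeping nor the paper's literally delivers $2\bar{\epsilon}$.
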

\begin{proof}
Denote $H = \frac{2}{\theta}\log(1/(2\theta^2) - 1)$. Let the iterations at which the restarts happen be denoted as $k_0 = 0,$ $k_1,$ $k_2,...,$ and note that, by assumption, $k_i \geq k_{i-1} + H$, for all $i \geq 1$. Assume w.l.o.g.~that each $k_i$ is even. We first claim that we have the following contraction between the successive restarts:
\begin{equation}\label{eq:contraction-b/w-restarts}
    f(\vx_{k_i}) - f(\vx^*) \leq (1-\theta)^{(k_i-k_{i-1})/2}(f(\vx_{k_{i-1}})-f(\vx^*)) + \bar{\epsilon}.%,
\end{equation}
%where $\bar{\epsilon}_i = \bar{\epsilon} + (1-\theta)^{(k_i-k_{i-1})/2}\bar{\epsilon}_{i-1}$ and $\bar{\epsilon}_0 = 0.$
To prove the claim, observe first using $k_i - k_{i-1} \geq H$ that:
\begin{align}\label{eq:half-contraction}
    \frac{L-\mu}{\mu} (1-\theta)^{k_i - k_{i-1}} \leq \Big(\frac{1}{2\theta^2} - 1\Big)(1-\theta)^{\frac{1}{\theta} \log(\frac{1}{2\theta^2} - 1)}(1-\theta)^{\frac{k_i - k_{i-1}}{2}} \leq (1-\theta)^{\frac{k_i - k_{i-1}}{2}}.
\end{align}
Applying Lemma~\ref{lemma:modified-agdp} with $\vx_{k_{i-1}}$ as an initial point and using strong convexity of $f$ (which implies $f(\vx_{k_{i-1}}) - f(\vx^*) \geq \frac{\mu}{2}\|\vx_{k_{i-1}} - \vx^*\|^2$), we have:
$$
f(\vx_{k_i}) - f(\vx^*)  \leq \frac{L-\mu}{\mu} (1-\theta)^{k_i - k_{i-1}}(f(\vx_{k_{i-1}}) - f(\vx^*)) + \bar{\epsilon}.
$$
Thus, combining the last inequality with~\eqref{eq:half-contraction}, inequality~\eqref{eq:contraction-b/w-restarts} follows.

Applying Eq.~\eqref{eq:contraction-b/w-restarts} recursively and using that $k_i - k_{i-1}\geq H$, we further have:
\begin{equation}\label{eq:total-contraction-b/w-restarts}
\begin{aligned}
    f(\vx_{k_i}) - f(\vx^*) &\leq (1-\theta)^{k_i/2} (f(\vx_0) - f(\vx^*)) + \bar{\epsilon}\sum_{j=0}^i (1-\theta)^{j H/2}\\
    &\leq (1-\theta)^{k_i/2} (f(\vx_0) - f(\vx^*)) + 2\theta^2 \bar{\epsilon}.
\end{aligned}
\end{equation}
To complete the proof, fix an iteration $k$ and let $k_i$ be the last iteration up to $k$ in which a restart happened. Applying Lemma~\ref{lemma:modified-agdp} with $k_i$ as the initial point, we get:
\begin{align*}
    f(\vx_k) - f(\vx^*) &\leq \frac{L - \mu}{\mu}\big(1 - \theta\big)^{k-k_i}(f(\vx_{k_i}) - f(\vx^*)) + \bar{\epsilon}\\
    &\leq \frac{L - \mu}{\mu}\big(1 - \theta\big)^{k/2}(f(\vx_{0}) - f(\vx^*)) + (1+2\theta^2)\bar{\epsilon}.
\end{align*}
It remains to note that $\theta^2 = {\mu/(2L)} \leq 1/2$.
\end{proof}

To obtain locally accelerated convergence, we  show that from some iteration onwards, we can apply the accelerated method from
Lemma~\ref{lemma:modified-agdp} with $\cc_k$ being the convex hull of the vertices from the active set and the
sequence $\Tilde{\vx}_k$ being the sequence of the AFW steps. The pseudocode for the LaCG-AFW algorithm is provided in
Algorithm~\ref{algo:acc-FW-rel-int} (Algorithm~\ref{algo:acc-FW-rel-intAppx} in the appendix). For completeness, pseudocode for one iteration of the accelerated method (ACC), which is based on Eq.~\eqref{eq:modified-magdp} is provided in Algorithm~\ref{algo:acc-stepAppx}. 

\begin{algorithm}
\caption{Locally Accelerated Conditional Gradients with Away-Step Frank-Wolfe (LaCG-AFW)}
\label{algo:acc-FW-rel-intAppx}
\begin{algorithmic}[1]
\State Let $\vx_0 \in \cx$ be an arbitrary point, $\cs_0 = \{\vx_0\}$, $\vlambda_0 = [1]$
\State Let $\vy_0 = \vxh_0 = \vw_0 = \vx_0$, $\vz_0 = -\nabla f(\vy_0) + L \vy_0$, $\cc_1 = \mathrm{co}(\cs_0)$
\State $a_0 = A_0 = 1,$ $\theta = \sqrt{\frac{\mu}{2L}},$ $\mu_0 = L-\mu$
\State $H = \frac{2}{\theta}\log(1/(2\theta^2) - 1)$ \Comment{Minimum restart period}
\State $r_f = \false$, $r_c = 0$ \Comment{Restart flag and restart counter initialization}
\For{$k=1$ to $K$} 
\State $\vx_k^{\mathrm{AFW}}, \, \cs_k^{\mathrm{AFW}}, \, \vlambda_k^{\mathrm{AFW}} = \mathrm{AFW}(\vx_{k-1}^{\mathrm{AFW}}, \, \cs_{k-1}^{\mathrm{AFW}}, \, \vlambda_{k-1}^{\mathrm{AFW}})$ \Comment{Independent AFW update}
\State $A_k = A_{k-1}/(1-\theta),$ $a_k = \theta A_k$
\State $\vxh_k,\, \vz_k,\, \vw_k = \mathrm{ACC}(\vx_{k-1}, \vz_{k-1}, \vw_{k-1}, \mu, \mu_0, a_k, A_k, \cc_k)$%Compute $\vz_k,\, \vw_k,\, \vy_k,\, \vxh_k$ based on Eq.~\eqref{eq:modified-magdp}
\If{$r_f$ and $r_c \geq H$}% and $f(\vx_k^{\mathrm{AFW}}) \leq f(\vxh_k)$} 
\Comment{Restart
  criterion is met}  \label{restart:project-appx}
\State $\vy_k = \argmin\{f(\vx_k^{\mathrm{AFW}}),\, f(\vxh_k)\}$ %\Comment{Choose Away-Step FW}
\State $\cc_{k+1} = \mathrm{co}(\cs_k^{\mathrm{AFW}})$ \Comment{Updating feasible set for the accelerated sequence}
\State $a_k = A_k = 1$, $\vz_k = -\nabla f(\vy_k) + L\vy_k$ \Comment{Restarting accelerated sequence}
\State $\vxh_k = \vw_k = \argmin_{\vu \in \cc_{k+1}}\{-\innp{\vz_k, \vu} + \frac{L}{2}\|\vu\|^2\}$
\State $r_c = 0$, $r_f = \false$ \Comment{Resetting the restart indicators}
%\State $\cs_k = \cs_k^{\mathrm{AFW}}$
%\Else
%\State $\vx_k = \argmin\{f(\vx_k^{\mathrm{AFW}}), \, f(\vxh_k)\}$ \Comment{Choose the better of the two steps}
%\State \label{line:asUpdate} $\cs_k = \{\vv_i: \lambda_i > 0\}$, $\vlambda_k = \left\{ \vlambda \in \rr^m \mid \vx_k = \sum_i^m \lambda_i  \vv_i, \vv_i \in \cs_k \right\} $
%\State $r_c = r_c + 1$
\Else
\If{$\cs_k^{\mathrm{AFW}} \setminus \cs_{k-1}^{\mathrm{AFW}} \neq \emptyset$} \Comment{If a vertex was added to the active set}
\State $r_f = \true$ \Comment{Raise restart flag}
%\Else
%\Else
\EndIf
\If{$r_f = \false$} \Comment{If AFW did not add a vertex since last restart}
\State $\cc_{k+1} = \mathrm{co}(\cs_k^{\mathrm{AFW}})$ \Comment{Update the feasible set}
\Else 
\State $\cc_{k+1} = \cc_k$ \Comment{Freeze the feasible set}
\EndIf
\EndIf
\State $\vx_k = \argmin\{f(\vx_k^{\mathrm{AFW}}), \, f(\vxh_k),\, f(\vx_{k-1})\}$ \Comment{Choose the better step + monotonicity} \label{Monotonicity1}
\State $r_c = r_c + 1$ \Comment{Increment the restart counter}
\EndFor
\end{algorithmic}
\end{algorithm}

\begin{algorithm}
\caption{Accelerated Step ACC($\vx_{k-1}, \vz_{k-1}, \vw_{k-1}, \mu, \mu_0, a_k, A_k, \cc_k$)}\label{algo:acc-stepAppx}
\begin{algorithmic}[1]
\State $\theta = a_k/A_k$
\State $\vy_{k} = \frac{1}{1+\theta}\vx_{k-1} + \frac{\theta}{1+\theta}\vw_{k-1}$
\State $\vz_k = \vz_{k-1} - a_k \nabla f(\vy_k) + \mu a_k \vy_k,$ $\;\vw_k = \argmin_{\vu \in \cc_k}\{- \innp{\vz_k, \vu} + \frac{\mu A_k + \mu_0}{2}\|\vu\|^2\}$
\State $\vxh_k = (1-\theta)\vx_{k-1} + \theta \vw_k$
\State\Return $\vxh_k, \vz_k, \vw_k$
\end{algorithmic}
\end{algorithm}

\mainthm*
\begin{proof}
The statement of the theorem is a direct consequence of the following observations about Algorithm~\ref{algo:acc-FW-rel-int} (Algorithm~\ref{algo:acc-FW-rel-intAppx} in the appendix). First, observe that the AFW algorithm is run independently of the accelerated sequence, and, in particular, the accelerated sequence has no effect on the AFW-sequence whatsoever. Further, in any iteration, the set $\cc_k$ that we project onto is the convex hull of some active set $\cs_i^{\mathrm{AFW}} \subseteq \cx$ for some $0\leq i \leq k-1$ implying $\vxh_k \in \cx$ -- each $\vxh_k$ is hence feasible. 

Now, as in any iteration $k$ the solution outputted by the algorithm is $\vx_k = \argmin\{f(\vx_k^{\mathrm{AFW}}), \, f(\vxh_k)\},$ the algorithm never makes less progress than AFW. This immediately implies (by a standard AFW  guarantee; see~\cite{lacoste2015global} and Proposition~\ref{proposition:BurnIn-Phase}) that for $k \geq  \frac{8 L}{\mu} \left( \frac{D}{\delta} \right)^2 \log \big(  \frac{f(\vx_0) - f(\vx^*)}{\epsilon} \big)$, it must be that $f(\vx_k) - f(\vx^*) \leq \epsilon$, which establishes the unaccelerated part of the minimum in the asserted rate. 

Further, there exists an iteration $K \leq K_0$ such that for all $k \geq K$ it holds $\vx^* \in \co(\cs_k^{\mathrm{AFW}})$ (see Proposition~\ref{proposition:BurnIn-Phase}). Let $K$ be the first such iteration. Then, the AFW  algorithm must have added a vertex in iteration $K$ as otherwise $\vx^* \in \co(\cs_{k-1}^{\mathrm{AFW}})$, contradicting the minimality of $K$. %Thus, if a restart happens at some iteration $k \geq K,$ we have that $\vx^* \in \cc_k$. 
Due to the restarting criterion from Algorithm~\ref{algo:acc-FW-rel-int}, a restart must happen by iteration $K_0 + H.$ %where 
%$$
%H = \frac{2}{\theta}\log(1/\theta^2 - 1)= 2\sqrt{\frac{L}{\mu}}\log(L/\mu -1 ) \leq 2 \sqrt{\frac{L}{\mu}} \log \left( \frac{(L-\mu) r^2 }{2\epsilon}\right).
%$$ 
Thus, for $k \geq K_0 + H$, it must be $\vx^* \in \cc_k$.

%The rest of the proof invokes Lemma~\ref{lemma:modified-agdp} and its corollary (Corollary~\ref{cor:restarts}), which ensures accelerated convergence following iteration $K_0 + H.$

Further, the restarting criterion implies that we perform at least $H = \frac{2}{\theta}\log(1/(2\theta^2) - 1)$ iterations between successive restarts of the accelerated sequence $\{\vxh_k\}$ and, unless a restart happens, we also have that $\cc_k \subseteq \cc_{k-1}$. Thus, starting from iteration $K_0 + H$, Lemma~\ref{lemma:modified-agdp} and Corollary~\ref{cor:restarts} apply and %the accelerated sequence $\{\vxh_k\}$ convergences to $\vx^*$, and so does 
$\{\vx_k\}$ converges to to $\vx^*$ at an accelerated rate. %as we take the better of AFW-sequence and the accelerated sequence at any restart. 
The remaining $2 \sqrt{\frac{L}{\mu}} \log \left( \frac{(L-\mu) r_0^2 }{2\epsilon}\right)$ part of the minimum in the asserted rate follows now by Corollary~\ref{cor:restarts}. 
\end{proof}

\begin{remark}[Inexact projection oracles.]
For simplicity, we stated Theorem~\ref{thm:main} assuming exact minimization oracle ($\epsilon_i^m = 0$ in Lemma~\ref{lemma:modified-agdp}). Clearly, it suffices to have $\epsilon_i^m = \frac{a_i}{8}\epsilon$ and invoke Theorem~\ref{thm:main} for target accuracy $\epsilon/2.$
\end{remark}

\begin{remark}[Further improvements to the practical performance.] 
If in any iteration the Wolfe gap of the accelerated sequence on $\cc_k$, $\max_{\vu \in \cc_k}\innp{\nabla f(\vx_k), \vx_k - \vu},$ is smaller than the target accuracy of the projection subproblem (order-$\frac{\epsilon}{\sqrt{\mu L}}$), then $f$ cannot be reduced by more than order-$\frac{\epsilon}{\sqrt{\mu L}}$ on $\cc_k,$ and one can safely perform an early restart without affecting the theoretical convergence guarantee. 
\end{remark}
  
\begin{remark}[Running Algorithm~\ref{algo:acc-FW-rel-int} when $\vx^* \in \interior(\cx)$]
  Usually we do not know ahead of time whether
  $\vx^* \in \interior(\cx)$ or whether $\vx^*$ is in the relative
  interior of a face of $\cx$. However, we can simply run
  Algorithm~\ref{algo:acc-FW-rel-int} agnostically, as in the case where
  $\vx^* \in \interior(\cx)$ we still exhibit local acceleration with
  an argumentation and convergence analysis analogous to the one in
  Section~\ref{sec:warm-optim-inter}. In particular, the assumptions
  of Section~\ref{sec:fullResult} are only needed to establish a bound
  for the estimation in Proposition~\ref{proposition:BurnIn-Phase}.
\end{remark}

\begin{remark}[Variant relying exclusively on a linear optimization oracle]
Similar as in the Conditional Gradient Sliding (CGS) algorithm \cite{lan2016conditional} we can also solve the projection problems using (variants of) CG. The resulting algorithm is then fully projection-free similar to CGS. % in the sense that it is also eshews \emph{internal} projections on to small dimensional simplices. % only accessing the feasible region by means of the linear optimization oracle.
In fact, a variant of CGS is recovered if we  ignore the AFW steps and only run the accelerated sequence with such projections realized by CG. 
\end{remark}

\section{Computational Results}
\label{appx:compresults}

We provide a detailed comparison of the performance of different LaCG variants relative to other state of the art algorithms, comparing the primal gap and dual gap evolution both in terms of the iteration count and in terms of wall-clock time. For the example over the Birkhoff polytope, the MIPLIB instance and the probability simplex we use the stepsize rule $\gamma_t = \min\left\{\frac{\innp{\nabla f(\vx_t), \vx_t - \vv_t}}{L\norm{\vx_t - \vv_t}^2}, \gamma_{\text{max}}\right\}$ for all the algorithms that do not have a fixed step size rule, where $\gamma_{\text{max}}$ is the maximum step size that can be taken without leaving the polytope. In the video colocalization and the traffic network example we use exact linesearch for all the algorithms whenever possible (as was done in the video colocalization case in \cite{lacoste2015global} and \cite{joulin2014efficient}). Regarding the linear optimization oracle for the MIPLIB and video-colocalization instance, we used Gurobi to solve the MIP problem with linear cost function and the shortest path problem over the DAG respectively.

Note that when the algorithm starts running, the CG variant will add a vertex to its active set in the first iteration, at which point the set $\cc_k$ will be frozen and will contain only two vertices until the next restart happens. For this reason, the convergence in the first $H$ iterations is driven by the CG steps, and due to the low overhead of computing the accelerated steps over $\cc_k$ with $|\cc_k| = 2$, the wall-clock performance in the first $H$ iterations will be approximately equal to that of the CG variant running by itself.

\subsection{Computational Enhancements to Algorithm~\ref{algo:acc-FW-rel-int}}

In order to speed up the convergence of the algorithm when the burn-in phase has not been completed we can substitute Line~\ref{Monotonicity1} in Algorithm~\ref{algo:acc-FW-rel-int} (or Line~\ref{Monotonicity2} in Algorithm~\ref{algo:acc-FW-rel-intAppx}) with the following:

\begin{algorithm}[H]
\caption{LaCG Enhancement}
\label{algo:enhancementAppx}
\begin{algorithmic}[1]
\If{$f(\vx_{k-1}) \leq f(\vx_k^{\mathrm{AFW}})$ and $f(\vx_{k-1}) \leq f(\vxh_k)$}
\State $\vx_{k} = \vx_{k-1}$
\If{$f(\vx_k) \leq f(\vx_k^{\mathrm{AFW}})$ and $\cc_{k+1} \subseteq \cs_{k+1}^{\mathrm{AFW}}$}
\State $\cs_{k+1}^{\mathrm{AFW}} = \vertex \left( \cc_{k+1} \right)$ \label{Alg:culling}
\State $\vx_k^{\mathrm{AFW}} = \vx_k$ 
\EndIf
\Else 
\State $\vx_k = \argmin\{f(\vx_k^{\mathrm{AFW}}), \, f(\vxh_k)\}$
\EndIf
\end{algorithmic}
\end{algorithm}

Note that although this means that the AFW sequence is not independent of the accelerated steps anymore, this does not affect the theoretical guarantees shown in Theorem~\ref{algo:acc-stepAppx}. The previous operation leads to greater progress during the burn-in phase, as after a restart the accelerated sequence active set $\cc_{k+1}$ is usually frozen, and the accelerated steps tend to converge to the function minimizer over $\cc_{k+1}$ with relative ease, progressing very quickly before stagnating (as little further progress can be made over $\cc_{k+1}$). If this happens and $f(\vxh_k) \leq f(\vx_k^{\mathrm{AFW}})$ along with $\cc_{k+1} \subseteq \mathrm{co}(\cs_{k+1}^{\mathrm{AFW}})$ (this is equivalent to the AFW steps not having dropped a vertex contained in $\cc_{k+1}$) the AFW steps can pick up progress from where the accelerated sequence is located.

\subsection{Solving Problem~\eqref{eq:magdp-equiv-min}}

At each iteration, LaCG has to solve the following subproblem to accuracy $\frac{\epsilon a_i}{8}$:
\begin{align*}
\vw_k = \argmin_{u \in \cc_k } \left\{ -\innp{\vz_k, \vu} + \frac{\mu A_k + \mu_0}{2} \norm{\vu}^2 \right\},
\end{align*}
where $\vz_k \in \mathbb{R}^n$, $\mu$, $\mu_0,$ and $A_k$ are given. This problem over the convex hull of $\cc_k$ can transformed to one over the probability simplex by noting that $\vu = \mathcal{V}_k\vlambda$, where $\mathcal{V}_k$ is the matrix that contains the elements in $\cc_k$ as column vectors and $\vlambda \in \Delta_{|\cc_k|}$, where $|\cc_k|$ is the cardinality of the set $\cc_k$. Rewritting the previous subproblem leads to:
\begin{align}
\vlambda_k = \argmin_{\vlambda \in \Delta_{|\cc_k|}} \left\{ -\innp{ \mathcal{V}_k \vz_k,  \vlambda} + \frac{\mu A_k + \mu_0}{2} \vlambda^T\mathcal{V}_k^T\mathcal{V}_k\vlambda  \right\}, \label{eq:subproblemLambda}
\end{align}
where now the solution to our problem is provided by $\vw_k = \mathcal{V}_k \vlambda_k$. We begin by noting that whenever the AFW step adds a vertex to $\cs_k$, the set $\cc_k$ is frozen and remains fixed until the next restart happens. This effectively means that the term $\mathcal{V}_k^T\mathcal{V}_k$ in Equation~\ref{eq:subproblemLambda} remains fixed and only the term $\mathcal{V}_k \vz_k$ changes over iterations until the next restart happens. If, on the other hand, the AFW step removes a vertex from $\cs_k$ and the set $\cc_k$ is not frozen, both $\mathcal{V}_k^T\mathcal{V}_k$ and $\mathcal{V}_k \vz_k$ have to be updated at that iteration. In our experiments, the AFW step usually adds a vertex to $\cs_k$ immediately after a restart has happened and so the set $\cc_k$ remains frozen for most of the iterations, and we only need to update $\mathcal{V}_k \vz_k$ at each iteration, which has a complexity of $\mathcal{O}(|\cc_k| n)$

At each iteration, we solve Problem~\ref{eq:subproblemLambda} using Nesterov accelerated gradient descent \cite{nesterov2018introductory} for smooth convex or strongly convex functions. In order to do so, we calculate the largest and smallest eigenvalue of the matrix $\mathcal{V}_k^T\mathcal{V}_k$ when these are updated, and this is done with scipy's ARPACK package, which for symmetric matrices uses a variant of the Lanczos method. As an initial point for Nesterov accelerated gradient descent, we use the solution to the problem from the previous iteration, i.e., $\vlambda_{k-1}$, if $\cc_k = \cc_{k-1}$, which allows the algorithm to find a suitable solution after only a few iterations.

In order to project onto the simplex, we use the $\mathcal{O}\left( n\log n \right)$ algorithm described in \cite[Algorithm 1]{duchi2008efficient}. An alternative would be to use a negative entropy regularizer in the implementation of Nesterov method, for which Bregman projection steps can be computed in closed form. However, we found in our experiments that Euclidean projections using \cite[Algorithm 1]{duchi2008efficient} were faster, and thus opted for using them despite their slightly worse theoretical guarantee. %Another alternative altogether would be to use mirror descent with a negative entropy mirror map to solve Problem~\eqref{eq:subproblemLambda}, which has a convergence rate of $\mathcal{O}(\sqrt{\log (|\cc_k|)/t})$, where $t$ denotes the subproblem iterations. With this choice of mirror map, the projection with the Bregman divergence onto the simplex is equivalent to normalizing the vector.

\subsection{LaCG over the Probability Simplex} \label{appx:simplexEnhancements}

The probability simplex is a simple polytope for which efficient projection operators exist, with complexity $\mathcal{O}(n\log n)$. Due to the existence of these operators and the $\mathcal{O}(\sqrt{L/\mu}\log 1/\epsilon)$ global convergence guarantee of accelerated projected methods, CG methods are seldom used over this feasible region (despite the $\mathcal{O}(n)$ complexity for the linear optimization oracle over the simplex).

Despite being a toy-example, the structure of this polytope lends itself well to several computational simplifications. As we mentioned earlier, there is no need to maintain an active set in this case, as a single pass over the current iterate (which has complexity $\mathcal{O}(n)$) allows us to recover the active set by retrieving the non-zero components of the current point, whose corresponding standard orths correspond to the elements in the active set. This means that the away step oracle simply returns the largest gradient component over the non-zero coordinates of the current iterate.

Furthermore, if we consider the subproblem that needs to be solved at each iteration of the LaCG algorithm, shown in Equation~\eqref{eq:subproblemLambda}, and we note that $\mathcal{V}_k^T\mathcal{V}_k = I$, we see that the subproblem can be rephrased as follows:

\begin{align*}
\vlambda_k & = \argmin_{\vlambda \in \Delta_{|\cc_k|}} \left\{ -\innp{ \mathcal{V}_k \vz_k,  \vlambda} + \frac{\mu A_k + \mu_0}{2} \vlambda^T\vlambda  \right\} \\
& = \argmin_{\vlambda \in \Delta_{|\cc_k|}}  \norm{\vlambda - \frac{\mathcal{V}_k \vz_k}{\mu A_k + \mu_0}}_2^2,
\end{align*}
where the term $\mathcal{V}_k \vz_k$ can be efficiently computed in $\mathcal{O}(n)$ time, and a single call to the simplex projection over $\Delta_{|\cc_k|}$ will return the solution to the subproblem in barycentric coordinates.

\subsubsection{Going from the probability simplex to the $\ell_1$-ball}

Lasso regression is a problem of interest in the benchmarking of many  first-order methods, where the goal is to solve a quadratic problem with $f(\vx) = \norm{A\vx - \vb}^2_2 = \frac{\vx^T Q \vx}{2} + \vc^T\vx$ over a scaled $\ell_1^n$-ball in $\rr^n$, where $Q$ is positive semidefinite. It would be advantageous if all the computational simplifications that applied to the simplex could be extended to the $\ell_1$-ball. In fact, there is a simple change of variables that allows us to write a problem over the simplex that is equivalent to that of the lasso. Consider $\vz \in \Delta_{2n}$, and define $x_i = z_i - z_{n+i}$ for all $i \in \llbracket 1, n \rrbracket$, then:
\begin{align}
\min\limits_{\vx \in \ell_1^n} \frac{\vx^T Q \vx}{2} + \vc^T\vx = \min\limits_{\vz \in \Delta_{2n}}  \vz^T \left[
\begin{array}{c|c}
Q & -Q \\
\hline
-Q & Q
\end{array}
\right] \vz + \left[
\begin{array}{c}
c \\
\hline
-c
\end{array}
\right]^T \vz \label{lassoRegression}
\end{align}

In order for the LaCG algorithm to be useful, we need the objective function to be positive definite, in order to achieve acceleration. But the equivalent problem over the simplex is only convex, as the determinant of the block-matrix on the right-hand side of Equation~\eqref{lassoRegression} is equal to zero. Therefore, the LaCG algorithm has to be applied directly to the problem over the $\ell_1$-ball, shown on the left-hand side. This means that there is no way of rewriting the lasso problem as a strongly convex problem over the probability simplex.

Note that the DICG algorithm in \cite{garber2013linearly} is applicable to $0/1$ polytopes (such as the probability simplex) with $L$-smooth and $\mu$-strongly convex objective functions. If we perform this change of variables to go from a lasso regression problem over the $\ell_1$-ball to one over the probability simplex, the resulting objective function over the simplex will no longer be strongly convex, and therefore the theoretical guarantees in \cite{garber2013linearly} no longer hold in this case. However, the DICG algorithm for general polytopes \cite{bashiri2017decomposition} can be applied to the lasso regression problem, and its theoretical guarantees hold true.

\subsection{On the Evolution of $\cc$ and $\cs$}

\begin{figure*}[ht!]
    \centering
    \hspace{\fill}
    \subfloat[Cardinality of the AFW active set for the Birkhoff example ($n =1600$).]{{\includegraphics[width= .45\textwidth]{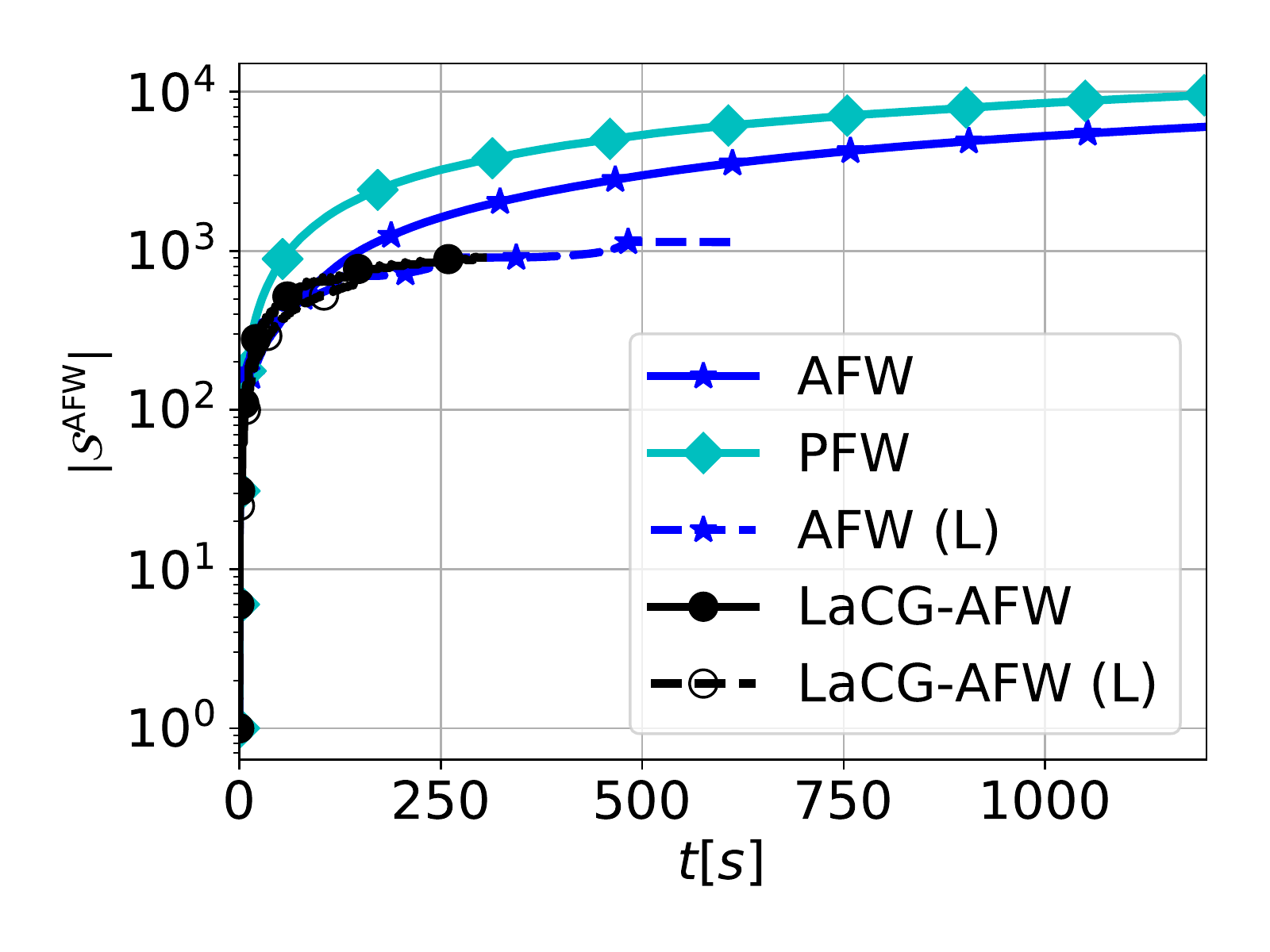} }\label{fig:cardinality1}}%
    %\qquad
    \hspace{\fill}
    \subfloat[Cardinality of the AFW active set for the MIPLIB example ($n =504$).]{{\includegraphics[width = .45\textwidth]{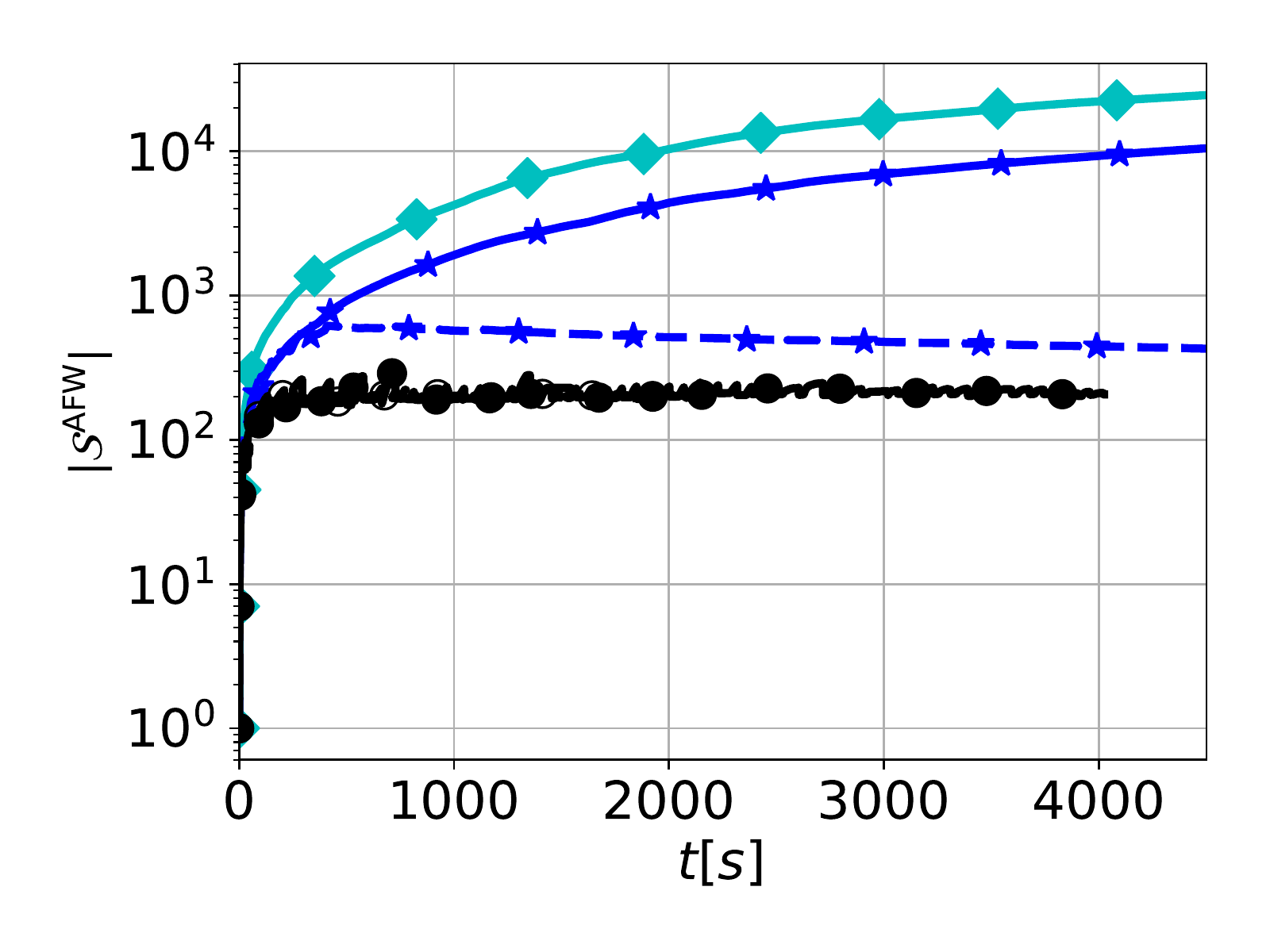} }\label{fig:cardinality2}}%
    \hspace*{\fill}
    \caption{Comparison of the evolution of the cardinality of the active set $\cs_{k+1}^{\mathrm{AFW}}$ for two of the examples.}%
    \label{fig:cardinality}%
\end{figure*}

The LaCG algorithm benefits from having a sparse active set. This is due to the fact that finding the away vertex in the AFW algorithm has complexity $\mathcal{O}(|\cs_{k+1}|n)$ and Problem~\eqref{eq:subproblemLambda} will in general be easier to solve the smaller the cardinality of the set $\cc_{k+1}$. Because of this, it is often useful to cull the active sets $\vertex(\cc_{k+1})$ and $\cs_{k+1}$ to promote sparsity. This can be done in conjunction with the operations in Algorithm~\ref{algo:enhancementAppx}. More specifically, before Line~\ref{Alg:culling}, we can discard the stale vertices in the convex decomposition of $\vxh_k$ over the set $\cc_{k+1}$, that is, we can eliminate the vertices in $\cc_{k+1}$ that have a zero coefficient when $\vxh_k$ is expressed as the convex combination of the elements in $\cc_{k+1}$ (also referred to as barycentric coordinates). This can be easily done as the accelerated sequence maintains a decomposition of the current point in the barycentric coordinates of $\cc_{k+1}$ in order to solve the projection subproblem. This culling can be performed regardless of which CG variant is used in the LaCG algorithm.

The evolution of the cardinality of the active set $\cs^{\mathrm{AFW}}$ in terms of time is shown in Fig.~\ref{fig:cardinality}\subref{fig:cardinality1}-\ref{fig:cardinality}\subref{fig:cardinality2} for two of the examples in the computational section. As can be seen in Fig.~\ref{fig:cardinality}, this culling of the active set is effective in keeping in check the tendency of the AFW steps in the LaCG algorithm to add more vertices.

\subsection{Additional Experiments}

We also consider the video co-localization problem, which can be shown to be equivalent to minimizing a quadratic objective function over a flow polytope~\cite{joulin2014efficient}. In this problem, the linear optimization oracle corresponds to finding a shortest path in a directed acyclic graph (DAG), for which there are algorithms that solve this problem in running time $\mathcal{O}(E+V),$ where $E$ and $V$ are the number of edges and vertices of the graph, respectively. We solve this problem over a directed acyclic graph with $3180$ edges and $227$ nodes that mimics the structure shown in shown in \cite{joulin2014efficient}, i.e., it has a source node connected to a layer of $15$ nodes, and each layer is fully connected with directed edges to the next layer, to make a total of $15$ layers (of $15$ nodes per layer). The last layer of nodes is connected to a node that acts as a sink. The quadratic was generated in the same way as in the Birkhoff polytope example, i.e. $f(\vx) = \vx^T\frac{M^TM+I}{2}\vx$, with $M$ having $1\%$ non-zero entries drawn from a standard Gaussian distribution. The matrix $M^TM$ has $27\%$ non-zero entries. The condition number in this instance is $L/\mu = 140$.

\begin{figure*}[ht!]
    \centering
    \vspace{-10pt}
    \hspace{\fill}
    \subfloat[Iteration]{{\includegraphics[width=3.95cm]{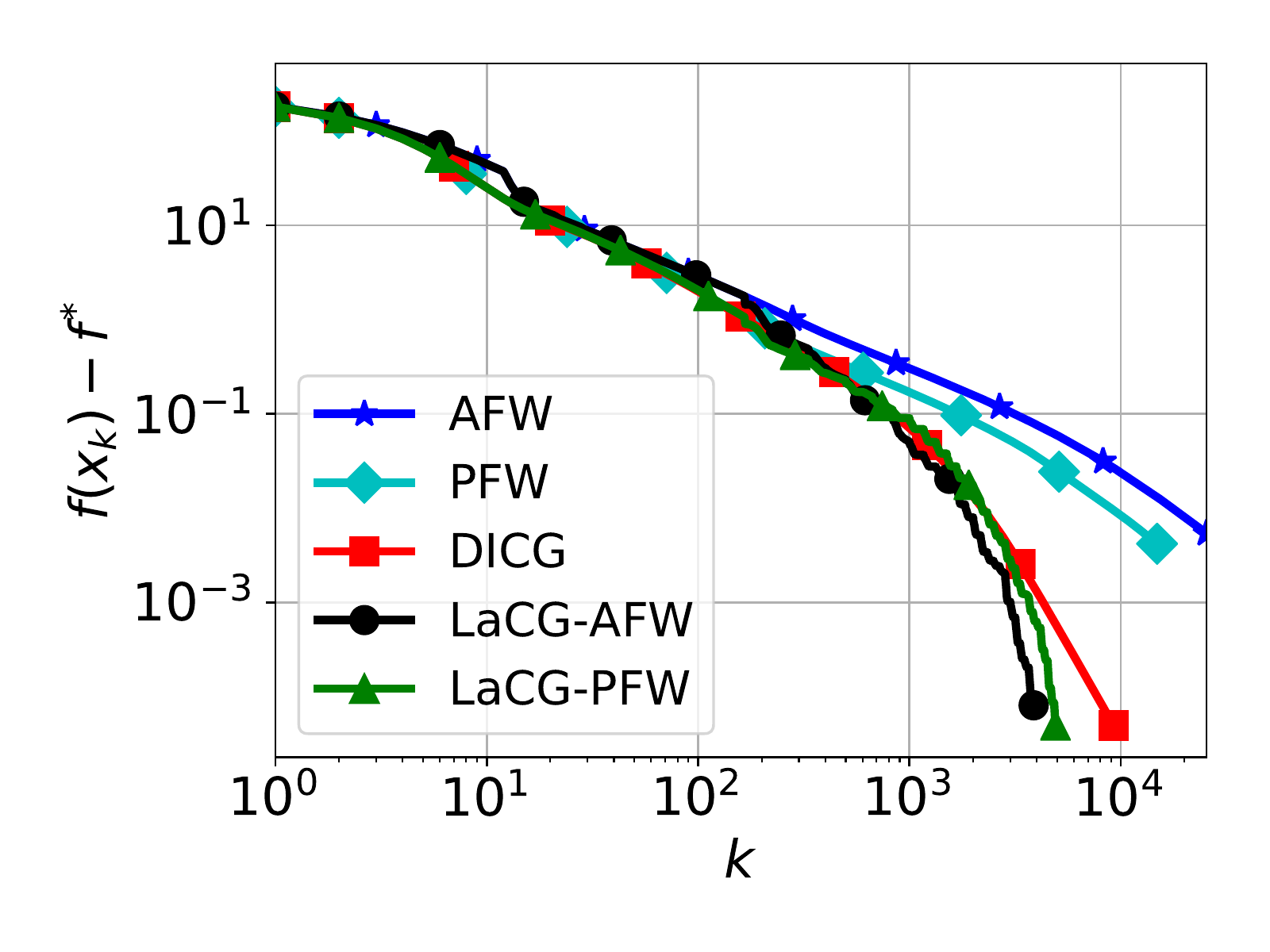} }\label{fig:truevideo1-it-cnt}}%
    %\qquad
    \hspace{\fill}
    \subfloat[Wall-clock time]{{\includegraphics[width=3.85cm]{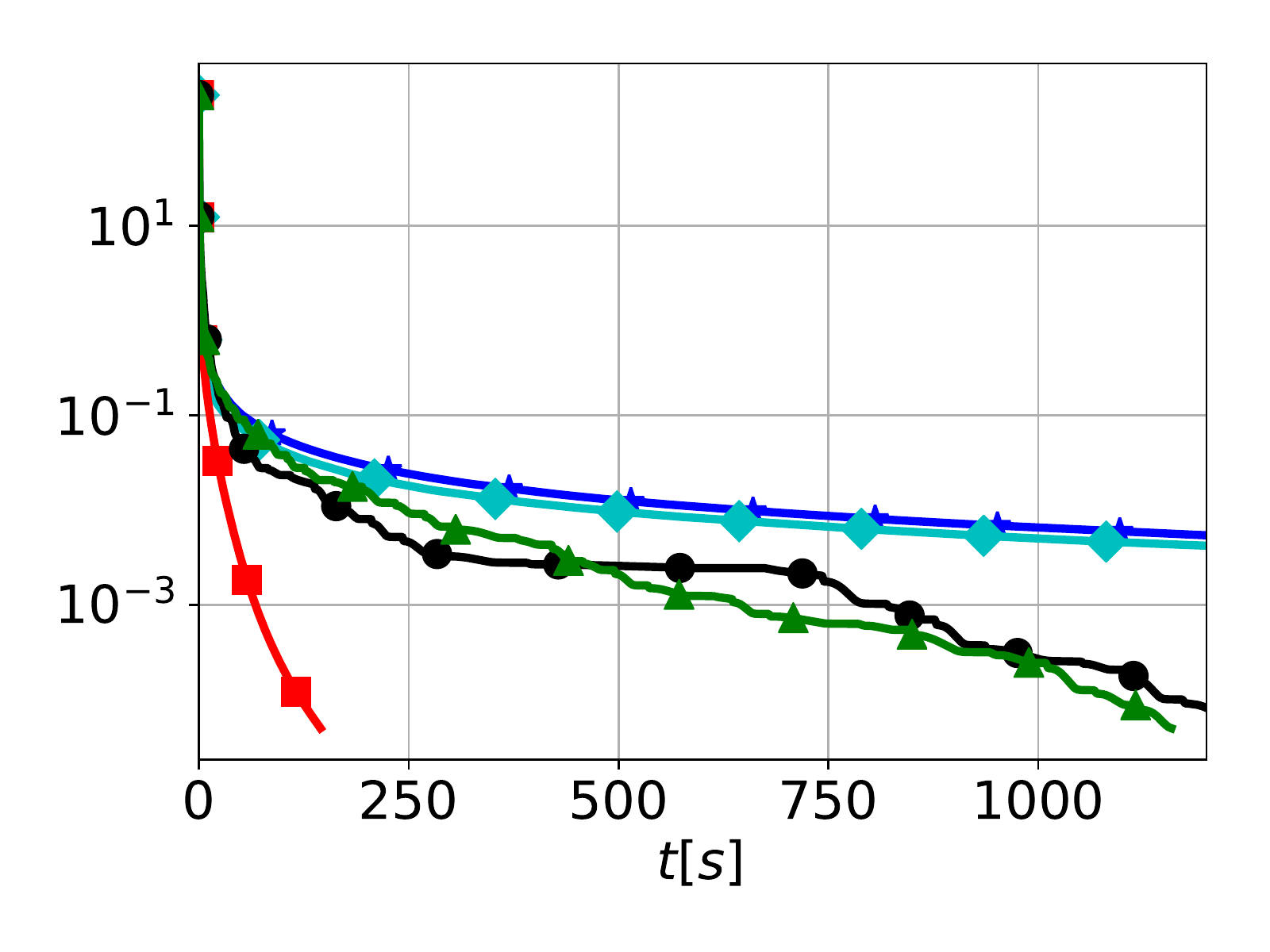} }\label{fig:truevideo1-time}}%
    \hspace{\fill}
    \subfloat[Iteration]{{\includegraphics[width=3.9cm]{Images/TrueVideoColocalization_PG_IterationLogLog.pdf} }\label{fig:truevideo2-it-cnt}}%
    %\qquad
    \hspace{\fill}
    \subfloat[Wall-clock time]{{\includegraphics[width=3.85cm]{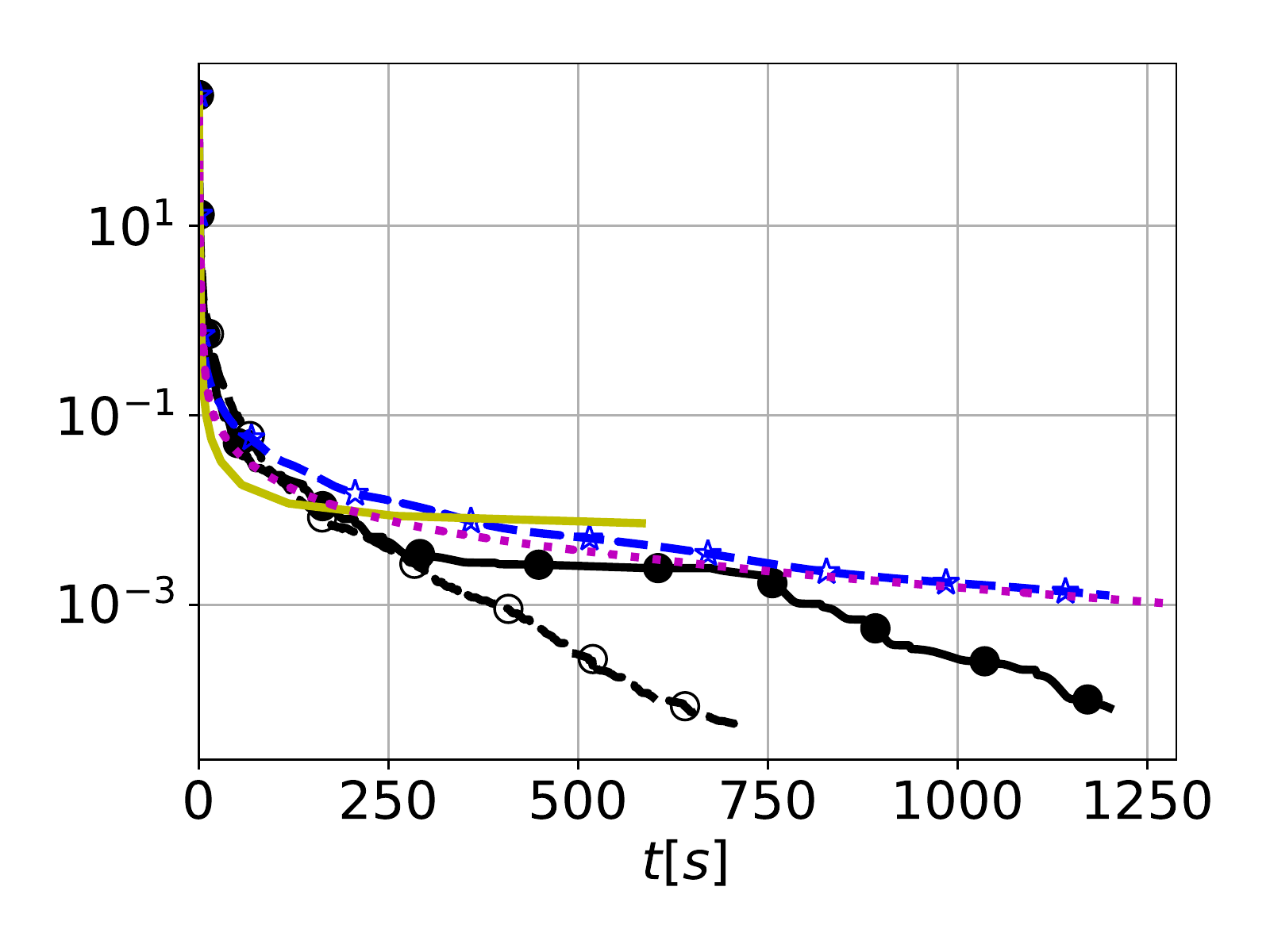} }\label{fig:truevideo2-time}}%
    \hspace*{\fill}
    \caption{Video co-localization: Algorithm comparison in terms of \protect\subref{fig:video1-it-cnt},\protect\subref{fig:video2-it-cnt} iteration count and \protect\subref{fig:video1-time},\protect\subref{fig:video2-time} wall-clock time.}%
    \label{fig:truevideo}%
\end{figure*}

In this example, DiCG is much more efficient than any of the active set based methods. Even though DiCG exhibits a slower convergence rate than LaCG variants, it greatly benefits from not maintaining an active set, which makes its iterations much more efficient. However, as discussed before, DiCG is not broadly applicable. Moreover, even in cases where it is applicable, DiCG can still be slower than LaCG variants once the linear optimization oracle is not very fast or the active sets do not get too large. It is an interesting open question whether local acceleration can be achieved with decomposition invariant methods.

\end{document}

